\newtheorem{thm}{Theorem}[section]
\newtheorem{lma}{Lemma}[section]
\newtheorem{pro}{Proposition}[section]
\theoremstyle{definition}
\theoremstyle{remark}
\numberwithin{equation}{section}
\def\f{\frac}
\def\hf1{^\f{1}{1-\xi^2}}
\def\be{\begin{equation}}
\def\en{\end{equation}}
\def\bs{\begin{split}}
\def\es{\end{split}}
\def\ba{\begin{align}}
\def\ea{\end{align}}
\author[Lin Chang]{Lin Chang}
\address{School
of Mathematical   Sciences, Beihang University, Beijing, China}
\email{ changlin23@buaa.edu.cn}
\renewcommand{\fancyhead}{}
\title{ C\MakeLowercase{onvergence} \MakeLowercase{rate} \MakeLowercase{toward}  \MakeLowercase{shock} \MakeLowercase{wave}  \MakeLowercase{under} \MakeLowercase{periodic} \MakeLowercase{perturbation} \MakeLowercase{for} \MakeLowercase{generalized} K\MakeLowercase{orteweg}-\MakeLowercase{de} V\MakeLowercase{ries}-B\MakeLowercase{urgers} \MakeLowercase{equation}}
\keywords{periodic perturbations; asymptotic behavior Korteweg-de Vries-Burgers equation; time decay rate; viscous shock wave.}
\date{\today}
\begin{document}
\begin{abstract}
In this paper, a viscous shock wave under space-periodic perturbation   of   generalized Korteweg-de Vries-Burgers equation
is investigated. It is shown that  if the initial periodic perturbation around the viscous shock wave is small, then the solution  time asymptotically tends to a viscous shock wave  with a shift partially determined by  the periodic oscillations. Moreover the exponential time decay rate toward the viscous shock wave is also obtained for some certain   perturbations.
\end{abstract}
\maketitle
\textbf {AMS subject classifications.}  35Q53; 76L05.
\color{black}\section{Introduction}
We consider  generalized Korteweg-de Vries-Burgers (KdV-Burgers) equation:
\begin{align}\label{1.1}
u_{t}+ f(u)_{x}+\textcolor{blue}{\mu} u_{xxx}{\textcolor{red}{=}}\textcolor{blue}{\gamma} u_{xx},    &\ x\in \mathbb{R},\ t>0,
\end{align}
where  the flux $f(u)\in C^3$  is  strictly convex, $\textcolor{blue}{\mu}>0$ the dispersive coefficient,  $\textcolor{blue}{\gamma}>0$   the viscosity.
\
When $\textcolor{blue}{\mu}{\textcolor{red}{=}}0$, the  equation  \eqref{1.1}  becomes the famous Burgers equation, which admits viscous shock wave solution  $\textcolor{blue}{\phi}(x-st)$ with shock propagation speed $s$. The stability of viscous shock for the Burgers equation has been extensively studied, see \cite{km1985,XYY2021,O}. When $\textcolor{blue}{\mu}>0$,    the KdV-Burgers equation \eqref{1.1} still  admits the  viscous shock wave solution as $\textcolor{blue}{\gamma}\gg 1$, that is  the viscosity plays the main role, see \cite{GH1967,BS1985}. Later, Bona-Rajopadhye-Schonbek     \cite{BR1994} showed that this shock wave is asymptotically stable in the case of $   f{\textcolor{red}{=}}\frac{1}{2} u^{2}$ provided that the perturbation is small. Moreover, the time decay rate was obtained in \cite{R1995,NR1998,PL2003}. The exponential time decay rate was further obtained in Yin-Zhao-Zhou \cite{YZZ2009}   provided that the initial values converge to  some constants  exponentially at the far field. We refer to \cite{q5,q4,q1,q2,q3} for more interesting works on KdV equation.

Note that all stability works above are based on $L^{2}$ integrability perturbation. Namely the initial perturbation is $L^{2}$ integrable. If the initial perturbation is space  periodic, what about the stability of viscous shock? When $\textcolor{blue}{\mu}{\textcolor{red}{=}}\textcolor{blue}{\gamma}{\textcolor{red}{=}}0$, the KdV-Burgers equation becomes hyperbolic conservation law  and  the periodic perturbation problem is  interesting and challenging since the solution oscillates at the far field and resonance may happen\cite{mr1984}. Indeed, Lax \cite{L1957} and Dafermos \cite{D1995} proved that the solution time asymptotically   tends to the periodic average. The asymptotic stability of shocks in both inviscid and viscid cases with periodic perturbation was obtained in \cite{XYY2019,XYY2021,YY2019}. We refer to  \cite{HY2021,Hy2021}  for more interesting works. When $\textcolor{blue}{\gamma}{\textcolor{red}{=}}0$, the KdV-Burgers equation becomes  KdV equation, and the periodic solutions were studied in \cite{S1997}.
Motivated by \cite{Hy2021}, we wonder whether the  viscous shock wave constructed  in \cite{BS1985,YZZ2009}  for the KdV-Burgers equation \eqref{1.1} is time asymptotically stable with periodic perturbations.

In this paper, we consider a Cauchy problem of \eqref{1.1}  with the initial data    satisfying
\begin{equation}\label{1.2}
u_{0}{\textcolor{blue}{(x)}}:{\textcolor{red}{=}} u\textcolor{blue}{(x,0)}\rightarrow \left\{\begin{array}{l}
\bar{u}_{l}+w_{0l}{\textcolor{blue}{(x)}}, \quad \quad  x\rightarrow   -\infty,\\
\bar{u}_{r}+w_{0r}{\textcolor{blue}{(x)}}, \quad \quad x \rightarrow \infty,
\end{array} \quad\right.
\end{equation}
where $\bar{u}_{l}$,   $\bar{u}_{r}$    are constants satisfying $\bar{u}_{l}>\bar{u}_{r}$. Function $ w_{0 i}{\textcolor{blue}{(x)}} \in L^{\infty}(\mathbb{R})$   is a periodic function with period $p_i>0$,($i{\textcolor{red}{=}}r,l$) satisfying
\begin{equation}\label{1.3}
\frac{1}{p_{i}} \int_{0}^{p_{i}} w_{0i}{\textcolor{blue}{(x)}}dx{\textcolor{red}{=}} 0.
\end{equation}


We aim to prove that the   shock wave is stable for the Cauchy problem \eqref{1.1}-\eqref{1.2}. Roughly speaking, the solution not only exists globally but also tends to a viscous shock wave  as time goes to infinity. Moreover,  the exponential time decay rate toward the viscous shock wave is also obtained for some certain periodic
perturbations.  The precise statements of the main results are given in Theorem \ref{theorem201} and Theorem \ref{theorem2.1} in Section 2.
We outline the strategy as follows. We apply the anti-derivative method to study the stability of the traveling wave solution  $\textcolor{blue}{\phi}$, in which the anti-derivative of the perturbation $u-\textcolor{blue}{\phi}$, namely,  $\Psi\textcolor{blue}{(x,t)}{\textcolor{red}{=}}\int_{-\infty}^{x}(u-\textcolor{blue}{\phi})\textcolor{blue}{(y,t)}dy$, ``should'' belong to some Sobolev spaces like $H^2(\mathbb{R})$. However, the method above can not be applicable directly in this paper since $u-\textcolor{blue}{\phi}$ oscillates at the far field and hence does not belong to any $L^p$ space for $p\ge 1$. Motivated by \cite{XYY2019},  we introduce a suitable ansatz $\textcolor{blue}{U}\textcolor{blue}{(x,t)}$, which has the same oscillations as the solution $u\textcolor{blue}{(x,t)}$ at the far field, so that $\int_{-\infty}^{x}u\textcolor{blue}{(x,t)}-\textcolor{blue}{U}\textcolor{blue}{(x,t)} dx$ belongs to some Sobolev spaces and the anti-derivative method is still available.

The ansatz is defined as $\textcolor{blue}{U}$ ${\textcolor{red}{=}} u_{l}\textcolor{blue}{(x,t)}g_{ \eta } (x -st-\eta(t) ) + u _ { r } \textcolor{blue}{(x,t)} [ 1 - g { } ( x-st-\eta(t) ) ]$, where $s$ is the shock speed, $u_{ l }$ is a periodic solution of \eqref{1.1} with the initial data $\bar{u}_{l}+w_{0 l}{\textcolor{blue}{(x)}}$ in \eqref{1.2} and is expected to have the same oscillation as $u$ near $x{\textcolor{red}{=}} -\infty$. Similarly $u_r$ is expected to have the same oscillation as $u$ near $x{\textcolor{red}{=}}\infty$. Thus $u\textcolor{blue}{(x,t)}-\textcolor{blue}{U}\textcolor{blue}{(x,t)}$ is expected to be integrable.
The shift $\eta(t)$ in the function
$g_\eta$, related to the traveling wave $\textcolor{blue}{\phi}$, is determined through an ODE equation $\eqref{x2.12}$ and partially depends on the initial periodic perturbations $w_{0 l}$ and $w_{0 r}$. The shift $\eta$ is used to guarantee the integral $\int_{-\infty}^{\infty}(u-\textcolor{blue}{U})\textcolor{blue}{(y,t)}dy{\textcolor{red}{=}}0$ so that $\Psi\textcolor{blue}{(\pm \infty,t)}{\textcolor{red}{=}}0$ and thus $\Psi$ could belong to $L^2$. Moreover,  we obtain the   exponential time convergence rate   toward the viscous shock wave by  using a  weighted  energy estimate provided that   the   initial  perturbation  is located in a suitable weighted function space.

The rest of the paper will be arranged as follows. In Section \ref{section2}, a suitable ansatz is constructed and the main results are stated. In Section \ref{section3},  the stability problem is reformulated to a perturbation equation around the ansatz. In Section \ref{section4},   some weighted a priori estimates are established.   In Section \ref{section5},   the main results   are  proved.    In Section \ref{section6}, some complementary proofs     are provided.
\noindent

\textbf { Notation.} The functional $\|\cdot\|_{L^p(\Omega)}$ is defined by $\| f\|_{L^p(\Omega)} {\textcolor{red}{=}}  (\int_{\Omega}|f|^{p}(\xi ){ d }\xi )^{\frac{1}{p}}$. The symbol $\Omega$ is often omitted  when $\Omega{\textcolor{red}{=}}(-\infty,\infty)$. We denote for simplicity
\begin{equation*}
\| f\| {\textcolor{red}{=}}  \left(\int_{ -\infty}^{ \infty}f^{2}(\xi ){ d }\xi \right)^{\frac{1}{2}}
\end{equation*}
as $p{\textcolor{red}{=}}2$. In addition, $H^m$ denotes the  $m$-th  order Sobolev space of functions defined by
 \begin{equation*}
\|f\|_{m} {\textcolor{red}{=}}  \left( \sum_{k{\textcolor{red}{=}}0}^{m}  \|\partial^{k}_{\xi}f\|^2 \right)^{\frac{1}{2}}.
\end{equation*}

 \section{Preliminaries and  Main Results}\label{section2}
\subsection{ Suitable   Ansatz}
A  viscous shock profile $ \textcolor{blue}{\phi}(x-st)$ is a  traveling wave solution of \eqref{1.1}. It satisfies
\begin{equation}\label{x2.2}
 \left\{ \begin{array}{ll}
&\textcolor{blue}{\mu} \textcolor{blue}{\phi} '''- \textcolor{blue}{\gamma} \textcolor{blue}{\phi}''-s\textcolor{blue}{\phi}' + f'(\textcolor{blue}{\phi}) \textcolor{blue}{\phi}' {\textcolor{red}{=}}0, \\
&\lim_{\xi \rightarrow -\infty }\textcolor{blue}{\phi}(\xi){\textcolor{red}{=}}{\bar{u}_{l}},\quad\lim_{\xi \rightarrow +\infty}\textcolor{blue}{\phi}(\xi){\textcolor{red}{=}}{\bar{u}_{r}},     \\
\end{array} \right.
\end{equation}
where $\textcolor{blue}{\xi} {\textcolor{red}{=}}x-st $, $'{\textcolor{red}{=}}\frac{d}{d \textcolor{blue}{\xi}}$, $s$ is the shock speed defined by Rankine--Hugoniot condition
\begin{equation}\label{x2.3}
s(\bar { u } _ { l} - \bar { u } _ { r } ){\textcolor{red}{=}} f(\bar { u } _ { l })    - f(\bar { u } _ { r }).
\end{equation}
\begin{lma}\cite[Lemma 2.1]{YZZ2009}\label{yl1}
Assume that the Rankine--Hugoniot condition   \eqref{x2.3}   and $\textcolor{blue}{\gamma}^{2}+4 \textcolor{blue}{\mu}(s-f'(\bar{u}_{l}))\geq0$ holds, Then the equation \eqref{x2.2}     has a unique solution $\textcolor{blue}{\phi}(\textcolor{blue}{\xi})$, up to a shift. Moreover, it satisfies $\textcolor{blue}{\phi}'(\textcolor{blue}{\xi})< 0$ and $\bar{u}_r <\textcolor{blue}{\phi}(\textcolor{blue}{\xi}) < \bar{u}_l$ for all $\textcolor{blue}{\xi} \in \mathbb{R}$.
\end{lma}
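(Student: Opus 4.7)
My plan is to reduce the third-order ODE in \eqref{x2.2} to a planar autonomous system and carry out a phase-plane analysis in the spirit of classical traveling-wave arguments. Integrating \eqref{x2.2} once in $\xi$ and using $\phi \to \bar{u}_l$, $\phi',\phi'' \to 0$ as $\xi \to -\infty$, I obtain
\begin{equation*}
\mu \phi'' - \gamma \phi' + G(\phi) = 0, \qquad G(\phi) := f(\phi) - f(\bar{u}_l) - s(\phi - \bar{u}_l).
\end{equation*}
The Rankine-Hugoniot condition \eqref{x2.3} is exactly $G(\bar{u}_r) = 0$, so the right boundary value is automatically compatible. Strict convexity of $f$ passes to $G$, so the roots $\bar{u}_r < \bar{u}_l$ are the only zeros of $G$, $G(\phi) < 0$ on $(\bar{u}_r, \bar{u}_l)$, and the Lax-type signs $G'(\bar{u}_l) = f'(\bar{u}_l) - s > 0$, $G'(\bar{u}_r) = f'(\bar{u}_r) - s < 0$ hold.

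Next I would study the planar system $\phi' = \psi$, $\mu \psi' = \gamma \psi - G(\phi)$. At $(\bar{u}_l, 0)$ the linearization has characteristic polynomial $\mu \lambda^{2} - \gamma \lambda + G'(\bar{u}_l) = 0$, whose discriminant $\gamma^{2} + 4\mu(s - f'(\bar{u}_l))$ is non-negative precisely by hypothesis; together with positive trace $\gamma/\mu$ and positive determinant $G'(\bar{u}_l)/\mu$, this yields two real positive eigenvalues, making $(\bar{u}_l, 0)$ an unstable node. At $(\bar{u}_r, 0)$ the determinant $G'(\bar{u}_r)/\mu$ is negative, so this is a saddle with a one-dimensional stable manifold. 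To produce the heteroclinic orbit I would introduce the functional
\begin{equation*}
V(\phi, \psi) := \tfrac{\mu}{2}\psi^{2} + \int_{\bar{u}_l}^{\phi} G(\tau)\, d\tau, \qquad \frac{d}{d\xi} V = \gamma \psi^{2} \ge 0,
\end{equation*}
and the open wedge $R := \{(\phi, \psi) : \bar{u}_r < \phi < \bar{u}_l,\ \psi < 0\}$. Checking the vector field on $\partial R$ shows that an orbit can leave $R$ only through $\{\psi = 0\}$ (where $\psi' = -G/\mu > 0$) or through $\{\phi = \bar{u}_r\}$; the branch of the unstable manifold of $(\bar{u}_l, 0)$ entering $R$ stays in a compact subset of $\overline{R}$ because $V$ is bounded on $R$, so by Poincar\'e-Bendixson it limits on an equilibrium or a periodic orbit. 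Periodic orbits are excluded by the strict monotonicity of $V$ off $\{\psi = 0\}$, so the orbit converges to $(\bar{u}_r, 0)$, giving the required heteroclinic connection; a shooting/continuity-of-exit argument on the angular parameter of outgoing directions pins down the specific orbit.

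Uniqueness up to the translation $\phi(\xi) \mapsto \phi(\xi + \xi_0)$ follows from the one-dimensionality of the stable manifold of the saddle $(\bar{u}_r, 0)$, and $\phi'(\xi) = \psi(\xi) < 0$ together with $\bar{u}_r < \phi(\xi) < \bar{u}_l$ are immediate from the orbit lying in $R$ for all $\xi$. The main obstacle I anticipate is the degenerate equality case $\gamma^{2} + 4\mu(s - f'(\bar{u}_l)) = 0$: the equilibrium $(\bar{u}_l, 0)$ becomes an improper node with only one eigendirection, and the $\xi \to -\infty$ asymptotics of outgoing orbits must be extracted from a Jordan-block normal form rather than distinct eigenvectors. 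The Lyapunov argument above nonetheless survives, and the sign condition in the hypothesis is exactly what prevents complex eigenvalues at $(\bar{u}_l, 0)$, which would force oscillations and destroy monotonicity.
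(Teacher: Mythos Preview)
The paper itself does not supply a proof of this lemma: it is quoted verbatim from \cite[Lemma~2.1]{YZZ2009} (which in turn relies on the phase-plane construction of Bona--Schonbek \cite{BS1985}). Your reduction to the second-order equation $\mu\phi''-\gamma\phi'+G(\phi)=0$, the identification of $(\bar u_l,0)$ as an unstable node under the discriminant hypothesis and of $(\bar u_r,0)$ as a saddle, and the Lyapunov quantity $V$ with $\dot V=\gamma\psi^2$ are all exactly the ingredients used in that literature, so your approach is the standard one.

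There is, however, a genuine gap in your compactness step. The claim that ``$V$ is bounded on $R$'' is false: on $R=\{\bar u_r<\phi<\bar u_l,\ \psi<0\}$ the term $\tfrac{\mu}{2}\psi^2$ is unbounded, so $V$ is unbounded above. Moreover, following an orbit \emph{forward} from the node only gives $V\ge 0$ (since $V(\bar u_l,0)=0$ and $\dot V\ge 0$), which is a lower bound on $V$ and yields no control on $|\psi|$. A related issue is that at an unstable node the unstable manifold is two-dimensional, so ``the branch of the unstable manifold entering $R$'' is not a single trajectory; you would have to isolate the strong/weak unstable direction explicitly, which you do not do.

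The clean fix is to run the argument from the other end: follow the one-dimensional \emph{stable} manifold of the saddle $(\bar u_r,0)$ backward in $\xi$. Along it $V$ is non-increasing in backward time with $V\le V(\bar u_r,0)=\int_{\bar u_l}^{\bar u_r}G>0$, and since $\int_{\bar u_l}^{\phi}G\ge 0$ for $\phi\in[\bar u_r,\bar u_l]$ this gives the uniform bound $|\psi|\le\sqrt{2V(\bar u_r,0)/\mu}$. Together with $\phi$ monotone this traps the backward orbit in a compact set; Poincar\'e--Bendixson and the strict monotonicity of $V$ then force the $\alpha$-limit to be $(\bar u_l,0)$. Uniqueness up to translation now follows immediately from the one-dimensionality of that stable manifold, and $\phi'<0$, $\bar u_r<\phi<\bar u_l$ are read off from the orbit lying in $R$. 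Your discussion of the degenerate case $\gamma^2+4\mu(s-f'(\bar u_l))=0$ is correct: the node becomes improper but remains a sink in backward time, so the argument survives.
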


Let $g ( \textcolor{blue}{\xi} ) :{\textcolor{red}{=}} \frac { \textcolor{blue}{\phi} ( \textcolor{blue}{\xi} ) - \bar { u }_{ r } } { \bar { u }_{ l } - \bar { u }_{ r } }$, it holds that

\begin{equation}
\lim_{\textcolor{blue}{\xi} \rightarrow -\infty} g  (\textcolor{blue}{\xi}){\textcolor{red}{=}}1, \lim_{\textcolor{blue}{\xi} \rightarrow +\infty} g  (\textcolor{blue}{\xi}){\textcolor{red}{=}}0.
\end{equation}
Then we have
\begin{lma}\label{yl3}
There exists a positive constant  $   \sigma_0 $ such that
\begin{equation*}
 \left\{ \begin{array}{ll}
  0< g ({\textcolor{blue}{\xi}}) \leqslant C   e^{-\sigma_0 {\textcolor{blue}{\xi}}}, \quad\quad\quad\quad &{\textcolor{blue}{\xi}}>0,\\
 0< 1- g({\textcolor{blue}{\xi}}) \leqslant C  e^{ \sigma_0{\textcolor{blue}{\xi}}},\quad\quad\quad \quad &{\textcolor{blue}{\xi}}<0,\\
  g '({\textcolor{blue}{\xi}})<0                    , \\
   \mid     g ^{(m)}({\textcolor{blue}{\xi}})  \mid                    \leqslant C e^{- \sigma_0 |{\textcolor{blue}{\xi}}|},\quad &{\textcolor{blue}{\xi}} \in \mathbb{R},m \in \mathbb{N^{+}}.
\end{array} \right.
\end{equation*}
 Here  $C >0 $ depends on $ \bar{u}_{l}$ and $\bar{u}_{r} $.
\end{lma}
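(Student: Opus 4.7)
The plan is to reduce \eqref{x2.2} to a second-order ODE and then analyse the rate at which $\phi$ approaches its limits via linearization at the two hyperbolic rest points in the phase plane. Integrating \eqref{x2.2} once from $-\infty$ to $\xi$, and using that $\phi',\phi''\to 0$ there, gives
\begin{equation*}
\mu\phi''-\gamma\phi' \;=\; s(\phi-\bar u_l)-\bigl(f(\phi)-f(\bar u_l)\bigr);
\end{equation*}
by the Rankine--Hugoniot identity \eqref{x2.3}, the right-hand side vanishes equally when $\bar u_l$ is replaced by $\bar u_r$. Since $f$ is strictly convex and $\bar u_l>\bar u_r$, \eqref{x2.3} forces the Lax entropy inequality $f'(\bar u_l)>s>f'(\bar u_r)$, so both end states are hyperbolic rest points for the associated first-order system $(\phi,\phi')$.

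Next, I would linearize at each end. Setting $\psi=\phi-\bar u_r$ for $\xi\to+\infty$ yields
\begin{equation*}
\mu\psi''-\gamma\psi'-\bigl(s-f'(\bar u_r)\bigr)\psi = O(\psi^2),
\end{equation*}
and the characteristic polynomial $\mu\lambda^2-\gamma\lambda-(s-f'(\bar u_r))=0$ has two real roots of opposite sign because $s-f'(\bar u_r)>0$. The stable manifold at $(\bar u_r,0)$ is one-dimensional, and because $\phi(\xi)\to\bar u_r$ as $\xi\to+\infty$ the orbit of the profile must lie on it; the classical stable-manifold theorem therefore gives
\begin{equation*}
|\phi(\xi)-\bar u_r|+|\phi'(\xi)| \;\leq\; C e^{-\sigma_+\xi},\qquad \sigma_+ := \frac{\sqrt{\gamma^2+4\mu(s-f'(\bar u_r))}-\gamma}{2\mu}>0,
\end{equation*}
for $\xi>0$. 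The argument at $\xi\to-\infty$ is symmetric: the hypothesis $\gamma^2+4\mu(s-f'(\bar u_l))\geq 0$ guarantees that the characteristic roots there are real, and since $s-f'(\bar u_l)<0$ both roots are positive, so the unstable manifold is one-dimensional and yields an analogous bound with rate $\sigma_->0$. Setting $\sigma_0:=\min(\sigma_+,\sigma_-)$ and dividing by $\bar u_l-\bar u_r$ delivers the claimed bounds on $g$ and $1-g$; the monotonicity $g'<0$ is inherited directly from Lemma \ref{yl1}.

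Finally, for the higher derivatives $g^{(m)}$, $m\geq 2$, I would proceed by induction using the reduced equation, which writes $\mu\phi''=\gamma\phi'+G(\phi)$ with $G(\phi):=s(\phi-\bar u_l)-(f(\phi)-f(\bar u_l))$. Differentiating this identity $k$ times expresses $\phi^{(k+2)}$ as a polynomial in $\phi',\ldots,\phi^{(k+1)}$ with coefficients that are smooth functions of $\phi$, so the exponential bound $e^{-\sigma_0|\xi|}$ on $\phi-\bar u_{l,r}$ and $\phi'$ propagates to every order by induction. The main obstacle in this plan is ensuring that $\phi'$ decays at the same exponential rate as $\phi-\bar u_{l,r}$; this is what forces me to invoke the stable/unstable manifold picture rather than a naive Gronwall argument applied to the scalar bound on $\phi-\bar u_{l,r}$, since a priori $\phi'$ could in principle mix in a slower eigendirection — ruled out here precisely because the manifolds are one-dimensional.
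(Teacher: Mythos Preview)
The paper does not give a proof; it simply records that the lemma is a corollary of \cite[Lemma~2.1]{YZZ2009} and omits the argument. Your proposal supplies precisely the phase-plane analysis that underlies such a result, and the strategy --- integrate \eqref{x2.2} once, linearize at the two rest points, invoke stable/unstable manifold theory, then bootstrap to higher derivatives via the reduced equation --- is the standard and correct route.

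One point needs correction. At $\bar u_l$ the characteristic polynomial $\mu\lambda^2-\gamma\lambda-(s-f'(\bar u_l))=0$ has product of roots $-(s-f'(\bar u_l))/\mu>0$ and sum of roots $\gamma/\mu>0$, so \emph{both} roots are positive: the equilibrium $(\bar u_l,0)$ is a source with a \emph{two}-dimensional unstable manifold, not a one-dimensional one as you wrote. This does not damage the estimate. Every orbit tending to a source as $\xi\to-\infty$ does so exponentially, and the standard linearization theory gives $|\phi(\xi)-\bar u_l|+|\phi'(\xi)|\le Ce^{\sigma_-\xi}$ with $\sigma_-$ equal to the smaller eigenvalue $\bigl(\gamma-\sqrt{\gamma^2+4\mu(s-f'(\bar u_l))}\,\bigr)/(2\mu)>0$. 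Your worry about ``mixing in a slower eigendirection'' therefore does not arise on the left, since both eigendirections are unstable; on the right the stable manifold is indeed one-dimensional and your argument there is fine as written. With this small fix the proof goes through.
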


\begin{proof}
This lemma is a corollary of \cite[Lemma 2.1]{YZZ2009}  and the proof is omitted.
\end{proof}

We assume that the  function  $u_i\textcolor{blue}{(x,t)} $  is   the   solution  of \eqref{1.1} with the  initial data ($ i{\textcolor{red}{=}}l,r$):
\begin{equation*}
 u_{ i 0 } ^ { } {\textcolor{blue}{(x)}}:{\textcolor{red}{=}}\bar { u }_{ i} + w _{ 0 i } {\textcolor{blue}{(x)}}.
\end{equation*}
We introduce  $g_{  {\textcolor{blue}{\eta }} } \textcolor{blue}{(x,t)}$ by
\begin{equation}
 g_{  {\textcolor{blue}{\eta }} } \textcolor{blue}{(x,t)} :{\textcolor{red}{=}} g ( {\textcolor{blue}{\xi}} -  {\textcolor{blue}{\eta }} ( t ) ),\label{x2.5}
\end{equation}
satisfying
\begin{equation}
\lim_{x\rightarrow -\infty} g_{  {\textcolor{blue}{\eta }}  } \textcolor{blue}{(x,t)}{\textcolor{red}{=}}1, \lim_{x\rightarrow +\infty} g_{   {\textcolor{blue}{\eta }} } (x ,t ){\textcolor{red}{=}}0,\label{y2.5}
\end{equation}
where $ {\textcolor{blue}{\eta }}(t)$ is the shift of the shock profile, the exact expression of $ {\textcolor{blue}{\eta }} (t)$    can be found in \eqref{x2.12}. Motivated by \cite{XYY2019}, we construct an ansatz below
\begin{equation}\label{x2.6}
\textcolor{blue}{U}\textcolor{blue}{(x,t)}:{\textcolor{red}{=}} u_{l}\textcolor{blue}{(x,t)}g_{ {\textcolor{blue}{\eta }}}\textcolor{blue}{(x,t)}+u_{r}\textcolor{blue}{(x,t)}[1-g_{ {\textcolor{blue}{\eta }}}\textcolor{blue}{(x,t)}].
\end{equation}

  Note that $u_{i}, i{\textcolor{red}{=}}l,r$ is a periodic solution of \eqref{1.1} with the initial data $\bar{u}_{i}+w_{0 i}{\textcolor{blue}{(x)}}$ and is expected to have the same oscillation as $u$ near $x{\textcolor{red}{=}} \mp\infty$, respectively.   Thus $\textcolor{blue}{U}\textcolor{blue}{(x,t)}$ is expected to have the same oscillation as $u\textcolor{blue}{(x,t)}$ at the far fields.

\subsection{Location of the Shift $ {\textcolor{blue}{\eta }}(t)$}
At the beginning of this subsection, we list a useful lemma, which will be used later. Equation \eqref{1.1} can be  rewritten  in the new coordinate  $\textcolor{blue}{({\textcolor{blue}{\xi}},t)}$ as
\begin{align}\label{s1.1}
u_{t}-s u_{{\textcolor{blue}{\xi}}} + f(u)_{{\textcolor{blue}{\xi}}}+\textcolor{blue}{\mu} u_{{\textcolor{blue}{\xi}}{\textcolor{blue}{\xi}}{\textcolor{blue}{\xi}}}{\textcolor{red}{=}}\textcolor{blue}{\gamma} u_{{\textcolor{blue}{\xi}}{\textcolor{blue}{\xi}}}, & \ {\textcolor{blue}{\xi}}\in \mathbb{R},\ t>0.
\end{align}

\begin{lma}\label{yl2}
Assume that $u_0\in H^{k+1} (0,p)  $  is a periodic function with period $p>0$ for any integer $k\geq0$. Then the periodic solution $u\textcolor{blue}{({\textcolor{blue}{\xi}},t)}$ of \eqref{s1.1} satisfies
\begin{equation*}
\begin{array}{l}
\left\| \partial_{{\textcolor{blue}{\xi}}}^{k}  (u-\bar{u})  \right\|_{L^{\infty}(\mathbb{R})} \leq C  \|u_{0}-\bar{u} \|_{H^{k+1}(0,p)} e^{-\theta t}, \quad t \geqslant 0,
\end{array}
\end{equation*}
where $\bar{u}{\textcolor{red}{=}}\frac{1}{p}\int_{0}^{p}u_0({\textcolor{blue}{\xi}})d{\textcolor{blue}{\xi}} $ and the positive constants $ C$, $\theta$  are independent of time \(t\).
\end{lma}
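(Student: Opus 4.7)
The plan is to reduce to the mean-zero perturbation $v := u - \bar u$, derive exponential $L^2(0,p)$ decay directly by an energy estimate combined with Wirtinger's inequality, then bootstrap to $H^{k+1}(0,p)$ by induction on $k$, and finally convert to $L^\infty(\mathbb{R})$ via Sobolev embedding together with periodicity.

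First, $\xi$-translation invariance of \eqref{s1.1} preserves $p$-periodicity, and integrating \eqref{s1.1} over one period annihilates every spatial-derivative term, so the mean is conserved and equals $\bar u$. Hence $v$ is $p$-periodic with zero mean and satisfies the same equation with flux $f(v+\bar u)-f(\bar u)$. Multiplying by $v$ and integrating over $(0,p)$, all contributions except the viscous one vanish: the transport term is a perfect derivative; $\mu v v_{\xi\xi\xi}$ integrates to zero after two integrations by parts; and the nonlinear flux integral $\int_0^p v f'(v+\bar u) v_\xi\, d\xi$ equals $\int_0^p \partial_\xi G(v)\, d\xi = 0$, where $G(w) := \int_0^w y f'(y+\bar u)\, dy$. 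What remains is
\begin{equation*}
\frac{d}{dt}\|v\|_{L^2(0,p)}^2 = -2\gamma \|v_\xi\|_{L^2(0,p)}^2.
\end{equation*}
Because $\int_0^p v = 0$, Wirtinger's inequality $\|v\|_{L^2(0,p)} \leq (p/(2\pi))\|v_\xi\|_{L^2(0,p)}$ yields $\|v(\cdot,t)\|_{L^2(0,p)} \leq \|v_0\|_{L^2(0,p)} e^{-\theta_0 t}$ with $\theta_0 := \gamma(2\pi/p)^2$, and this step requires no smallness assumption.

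For $k \geq 1$, I would apply $\partial_\xi^m$ for $0 \leq m \leq k+1$ to the equation and pair with $\partial_\xi^m v$ in $L^2(0,p)$. The dispersive operator contributes nothing because $\mu \partial_\xi^3$ is $L^2$-antisymmetric and commutes with $\partial_\xi$; the viscous term produces the dissipation $-\gamma \|\partial_\xi^{m+1} v\|_{L^2(0,p)}^2$; and the nonlinear commutator $[\partial_\xi^{m}, f'(v+\bar u)\partial_\xi]v$ expands, via the Fa\`a di Bruno formula, into a finite sum of products of lower-order derivatives of $v$ with smooth functions of $v+\bar u$, which are controlled by $\|v\|_{W^{1,\infty}(0,p)} \|v\|_{H^{k+1}(0,p)}^2$ using the embedding $H^1(0,p) \hookrightarrow L^\infty(0,p)$ and Gagliardo--Nirenberg interpolation on $(0,p)$. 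Summing these identities over $m$ with appropriate weights and applying Wirtinger at each order yields
\begin{equation*}
\frac{d}{dt}\|v\|_{H^{k+1}(0,p)}^2 \leq \bigl(-c_k + C_k \|v\|_{W^{1,\infty}(0,p)}\bigr)\|v\|_{H^{k+1}(0,p)}^2,
\end{equation*}
where $c_k, C_k > 0$ depend on $\gamma, \mu, p, f$ and on an a priori $H^{k+1}$ bound for $v$.

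The main obstacle is that this Gr\"onwall-type inequality is useful only once $\|v\|_{W^{1,\infty}}$ is small, whereas the $L^2$ estimate alone is not pointwise. I would circumvent it by first establishing, via standard well-posedness for \eqref{s1.1}, a uniform-in-time bound $\|v(\cdot,t)\|_{H^{k+1}(0,p)} \leq M$ depending on $\|u_0-\bar u\|_{H^{k+1}(0,p)}$; combining $M$ with the exponential $L^2$ decay above and Gagliardo--Nirenberg then forces $\|v(\cdot,t)\|_{W^{1,\infty}(0,p)} \to 0$ at some exponential rate, so that after a finite time $T_0$ the coefficient on the right-hand side becomes $\leq -c_k/2$. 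Gr\"onwall then delivers exponential $H^{k+1}(0,p)$ decay from $t=T_0$ on, extended to $[0,T_0]$ by absorbing the bounded factor into the constant. Finally, for every $p$-periodic function $w$ one has $\|w\|_{L^\infty(\mathbb{R})} = \|w\|_{L^\infty(0,p)} \leq C \|w\|_{H^1(0,p)}$, so choosing $w = \partial_\xi^k v$ converts the $H^{k+1}(0,p)$ decay into the stated $L^\infty(\mathbb{R})$ bound on $\partial_\xi^k(u-\bar u)$.
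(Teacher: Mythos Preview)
Your argument is correct and shares the paper's overall skeleton---basic $L^2$ energy identity plus Poincar\'e/Wirtinger for the base case, higher-order energy estimates for the induction, and Sobolev embedding at the end---but the mechanism you use to propagate the decay to higher derivatives is genuinely different. The paper first establishes uniform-in-time $H^m$ bounds (its Claim~1) and then, for the decay (Claim~2), does \emph{not} apply Wirtinger at level $m$; instead it writes the level-$m$ differential inequality $\tfrac{d}{dt}\|\partial_\xi^m u\|^2 \le C_m\|\partial_\xi^m u\|^2 + (\text{exp.\ decaying})$ and kills the bad $C_m\|\partial_\xi^m u\|^2$ term by adding a large multiple of the level-$(m-1)$ inequality, whose dissipation is precisely $\gamma\|\partial_\xi^m u\|^2$. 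This gives a clean linear Gr\"onwall step and a direct induction $H^{m-1}\Rightarrow H^m$. Your route applies Wirtinger at every level to produce a genuine damping coefficient $-c_k$ for the full $H^{k+1}$ norm, and pays for the nonlinearity with a factor $\|v\|_{W^{1,\infty}}$, which you make small by interpolating the $L^2$ decay against the uniform $H^{k+1}$ bound. What your approach buys is an explicit rate $\theta=\gamma(2\pi/p)^2$ independent of $k$; what the paper's two-level combination buys is that one never needs the intermediate $W^{1,\infty}$ decay step, so the induction is one line shorter. One caveat common to both: the uniform-in-time $H^{k+1}$ bound you invoke as ``standard well-posedness'' is exactly the content of the paper's Claim~1 and is not a consequence of local theory alone---it needs the same energy hierarchy together with the time-integrated dissipation from lower orders, so you should state that step rather than black-box it.
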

 The proof of  {Lemma} \ref{yl2}  is left in Appendix.

 \

 Now we begin to  study the property of the shift. Since $ \textcolor{blue}{U} $ is not the solution of the KdV-Burgers  equation \eqref{1.1}, the error term is
\begin{equation}\label{x2.8}
h:{\textcolor{red}{=}}\textcolor{blue}{U}_{t}-s \textcolor{blue}{U}_{{\textcolor{blue}{\xi}}}+f(\textcolor{blue}{U})_{{\textcolor{blue}{\xi}}}+\textcolor{blue}{\mu} \textcolor{blue}{U}_{{\textcolor{blue}{\xi}}{\textcolor{blue}{\xi}}{\textcolor{blue}{\xi}}}-\textcolor{blue}{\gamma} \textcolor{blue}{U}_{{\textcolor{blue}{\xi}}{\textcolor{blue}{\xi}}}.
\end{equation}
By $\eqref{x2.6}$, we have
\begin{align}\label{x2.9}
\begin{split}
h{\textcolor{red}{=}}&\left\{ f(\textcolor{blue}{U})-f(u_{l})g_{ {\textcolor{blue}{\eta }}}
 -f( u_{r})(1-g_{ {\textcolor{blue}{\eta }}})+3\textcolor{blue}{\mu}(u_{l}-u_{r})_{{\textcolor{blue}{\xi}}}g_{ {\textcolor{blue}{\eta }}}'{-2\textcolor{blue}{\gamma}(u_{l}-u_{r}) g_{ {\textcolor{blue}{\eta }}}'}\right\}_{\textcolor{blue}{\xi}}\\
 &+(f(u_{l})-f(u_{r}))g_{ {\textcolor{blue}{\eta }}}'+\textcolor{blue}{\gamma} [u_{l}-u_{r}] g_{ {\textcolor{blue}{\eta }} }''+\textcolor{blue}{\mu} [u_{l}-u_{r}] g _ {  {\textcolor{blue}{\eta }} }'''\\
 &-(u_{l}-u_{r}) \cdot g _ {  {\textcolor{blue}{\eta }} }' \cdot\left(s+ {\textcolor{blue}{\eta }}'(t)\right).
\end{split}&
\end{align}

Subtracting \eqref{x2.8} from \eqref{s1.1} and integrating the resulting system with respect to ${\textcolor{blue}{\xi}}$ over $(-\infty,\infty)$, one has
\begin{equation*}
\frac{d}{dt}\int_{-\infty}^{\infty}(u-\textcolor{blue}{U} )\textcolor{blue}{({\textcolor{blue}{\xi}},t)}d{\textcolor{blue}{\xi}}{\textcolor{red}{=}}-\int_{-\infty}^{\infty}h\textcolor{blue}{({\textcolor{blue}{\xi}},t)}{d}{\textcolor{blue}{\xi}}.
\end{equation*}
To apply the anti-derivative  method which is often used to study the stability of viscous shock, introduced in \cite{O}, we expect
\begin{equation}\label{x2.10}
 \int_{-\infty}^{\infty}(u-\textcolor{blue}{U})\textcolor{blue}{({\textcolor{blue}{\xi}},t)}{d{\textcolor{blue}{\xi}}}{\textcolor{red}{=}}0,
\end{equation}
holds for any $t\ge 0$. Then we compute from \eqref{x2.10} that
\begin{align}\label{x2.11}
\begin{split}
0{\textcolor{red}{=}}&\int_{-\infty} ^{\infty}\left(f(u_{l})-f(u_{r})\right)g_{ {\textcolor{blue}{\eta }}}'
d{\textcolor{blue}{\xi}}
-\int_{-\infty}^{\infty}\left(u_{l}-u_{r}\right) g_{ {\textcolor{blue}{\eta }} }'\left( {\textcolor{blue}{\eta }}'(t)+s\right)d{\textcolor{blue}{\xi}} \\
&+\textcolor{blue}{\gamma}  \int_{-\infty} ^{\infty}  \left(u_{l}-u_{r}\right)g_{ {\textcolor{blue}{\eta }}}'' d{\textcolor{blue}{\xi}}+ \textcolor{blue}{\mu} \int_{-\infty} ^{\infty}\left(u_{l}-u_{r}\right)g_{ {\textcolor{blue}{\eta }}}'''
d{\textcolor{blue}{\xi}}.
\end{split}&
\end{align}
Thus we obtain the following ODE for $ {\textcolor{blue}{\eta }}(t)$,
\begin{equation}\label{x2.12}
 \left\{\begin{array}{ll}
  {\textcolor{blue}{\eta }}'(t){\textcolor{red}{=}}\frac{\textcolor{blue}{\gamma} \int_{-\infty} ^{\infty}\left(u_{l}-u_{r}\right)g_{ {\textcolor{blue}{\eta }}}''
d{\textcolor{blue}{\xi}}+ \textcolor{blue}{\mu} \int_{-\infty} ^{\infty}\left(u_{l}-u_{r}\right)g_{ {\textcolor{blue}{\eta }}}'''
d{\textcolor{blue}{\xi}}+\int_{-\infty}^{\infty}(f(u_{l})-f( u_{r} ))g_{ {\textcolor{blue}{\eta }}}'
d{\textcolor{blue}{\xi}}}{\int_{-\infty}^{\infty}\left(u_{l}-u_{r}\right)g_{ {\textcolor{blue}{\eta }}}'
d{\textcolor{blue}{\xi}}}-s,\\
 {\textcolor{blue}{\eta }}(0){\textcolor{red}{=}} {\textcolor{blue}{\eta }}_0 .
\end{array} \right.
\end{equation}
And the initial data $ {\textcolor{blue}{\eta }}_0$ of the equation \eqref{x2.12} should satisfy $\int_{-\infty}^{\infty}(u-\textcolor{blue}{U} )({\textcolor{blue}{\xi}},0){d {\textcolor{blue}{\xi}} }{\textcolor{red}{=}}0, $ i.e.,

\begin{align}\label{x2.13}
\begin{split}
&\int_{-\infty}^{\infty}(u_{0}-u_{l0})({\textcolor{blue}{\xi}})g{({\textcolor{blue}{\xi}}- {\textcolor{blue}{\eta }}_0)}+(u_{0}-u_{r0} )({\textcolor{blue}{\xi}})[1-g({\textcolor{blue}{\xi}}- {\textcolor{blue}{\eta }}_0)]d{\textcolor{blue}{\xi}}{\textcolor{red}{=}}0.
\end{split}
\end{align}


\begin{lma}\label{yl4}
There  exists a small constant $\delta_{0}$, such that if    ${\textcolor{red}{\delta}}:{\textcolor{red}{=}}\max \|w_{0i} \|_{H^1(\Omega)}<{\textcolor{red}{\delta}}_{0};i{\textcolor{red}{=}}l,r,$
the ODE problem \eqref{x2.12} has a unique   smooth solution $   {\textcolor{blue}{\eta }}(t):\left[ {0},+\infty\right) \rightarrow \mathbb{R}$. Moreover, the shift $ {\textcolor{blue}{\eta }}(t)$   satisfies
\begin{equation}\label{x2.14}
\begin{array}{l}
\left| {\textcolor{blue}{\eta }}'(t)\right|+ \left| { {\textcolor{blue}{\eta }}}(t)- {\textcolor{blue}{\eta }}_{\infty}\right|\leq C{\textcolor{red}{\delta}} e^{-\theta t},\quad t\geq0,
\end{array}
\end{equation}
where $C $ and $\theta $ are positive constants independent of time $t$, and
\begin{align*}
\begin{split}
 {\textcolor{blue}{\eta }}_{\infty} {\textcolor{red}{=}}&\frac{1}{\bar{u}_{l}-\bar{u}_{r}}\left( {\textcolor{blue}{\eta }}_{\infty, 1}+ {\textcolor{blue}{\eta }}_{\infty, 2}\right),\\
 {\textcolor{blue}{\eta }}_{\infty, 1}{\textcolor{red}{=}}&\int_{-\infty}^{0}(u_{0}-\textcolor{blue}{\phi}-w_{0l})({\textcolor{blue}{\xi}}) {d{\textcolor{blue}{\xi}} }+\int_{0}^{+\infty}\left(u_{0}-\textcolor{blue}{\phi}-w_{0r}\right)({\textcolor{blue}{\xi}})d{\textcolor{blue}{\xi}},\\
 {\textcolor{blue}{\eta }}_{\infty, 2}{\textcolor{red}{=}} &\int_{0}^{+\infty} \frac{1}{p_{l}} \int_{0}^{p_{l}}\left[f\left(u_{l} \right)-f\left(\bar{u}_{l}\right)-su_{l}+s\bar{u}_{l}\right]\textcolor{blue}{({\textcolor{blue}{\xi}},t)}d{\textcolor{blue}{\xi}} dt-\frac{1}{p_{l}} \int_{0}^{p_{l}} \int_{0}^{{\textcolor{blue}{\xi}}} w_{0 l}(y)dy d{\textcolor{blue}{\xi}}\\
 &-\int_{0}^{+\infty} \frac{1}{p_{r}} \int_{0}^{p_{r}}\left[f\left(u_{r} \right)-f\left(\bar{u}_{r}\right)-s u_{r} +s \bar{u}_{r}\right]\textcolor{blue}{({\textcolor{blue}{\xi}},t)}d{\textcolor{blue}{\xi}} dt+\frac{1}{p_{r}} \int_{0}^{p_{r}} \int_{0}^{{\textcolor{blue}{\xi}}} w_{0 r}(y)dy d{\textcolor{blue}{\xi}}.
\end{split}&
\end{align*}
\end{lma}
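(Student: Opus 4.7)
The proof proceeds in three stages: local well-posedness of the ODE \eqref{x2.12}, an a priori exponential decay for $\eta'(t)$, and explicit identification of the limit $\eta_\infty$. For local well-posedness, Lemma \ref{yl2} applied to the periodic solutions $u_l,u_r$ yields $\|u_i-\bar u_i\|_{L^\infty(\mathbb R)}\le C\delta e^{-\theta t}$ for $i=l,r$. Since $\int_{\mathbb R} g_\eta'\,d\xi = g(+\infty)-g(-\infty) = -1$, the denominator in \eqref{x2.12} equals $-(\bar u_l-\bar u_r) + O(\delta e^{-\theta t})$, which is bounded away from zero uniformly in $t$ and $\eta$ provided $\delta$ is small. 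The right-hand side of \eqref{x2.12} is therefore $C^\infty$ in $\eta$, and Picard--Lindel\"of produces a unique local smooth solution once $\eta_0$ is fixed. Viewing \eqref{x2.13} as a scalar equation for $\eta_0$, its derivative at $\eta_0=\delta=0$ equals $-(\bar u_l-\bar u_r)\neq 0$, so the implicit function theorem yields a unique $\eta_0=O(\delta)$.

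For the decay estimate, I integrate by parts (boundary terms vanish by the exponential decay of $g^{(k)}$ in Lemma \ref{yl3}) to rewrite the numerator of \eqref{x2.12} minus $s$ times its denominator as
\begin{equation*}
\int_{\mathbb R}\bigl\{f(u_l)-f(u_r)-s(u_l-u_r)-\gamma(u_l-u_r)_\xi+\mu(u_l-u_r)_{\xi\xi}\bigr\}g_\eta'\,d\xi.
\end{equation*}
Setting $v_i:=u_i-\bar u_i$, the constant contribution $(f(\bar u_l)-f(\bar u_r)-s(\bar u_l-\bar u_r))\int g_\eta'\,d\xi$ vanishes by the Rankine--Hugoniot condition \eqref{x2.3}, and the remainder is linear in $v_i$ and its derivatives (modulo quadratic terms); Lemma \ref{yl2} bounds it by $C\delta e^{-\theta t}$. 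Dividing by the nonvanishing denominator yields $|\eta'(t)|\le C\delta e^{-\theta t}$, extends the local solution to $[0,\infty)$, and, setting $\eta_\infty := \eta_0+\int_0^\infty \eta'(\tau)\,d\tau$, gives the bound $|\eta(t)-\eta_\infty|\le C\delta e^{-\theta t}$.

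Identifying $\eta_\infty$ with the closed-form expression is the principal difficulty. Using that period means of $u_i$ are conserved along \eqref{s1.1} (so the period averages of $v_i$ vanish), one finds that to leading order
\begin{equation*}
\eta'(t) = \frac{1}{\bar u_l-\bar u_r}\Bigl[\tfrac{1}{p_l}\!\int_0^{p_l}\!\!(f(u_l)-f(\bar u_l)-s(u_l-\bar u_l))\,d\xi-\tfrac{1}{p_r}\!\int_0^{p_r}\!\!(f(u_r)-f(\bar u_r)-s(u_r-\bar u_r))\,d\xi\Bigr]
\end{equation*}
plus a remainder integrable in $t$, since the mean-zero oscillation of $v_i$ integrated against the localized weight $g_\eta'$ contributes only $O(\delta e^{-\theta t})$. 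Integrating over $[0,\infty)$ produces the $t$-integral terms in $\eta_{\infty,2}$. The remaining pieces come from unpacking \eqref{x2.13}: splitting the integral at $\xi=0$, using the identity $u_{l0}g+u_{r0}(1-g)=\phi(\xi-\eta_0)+w_{0l}g+w_{0r}(1-g)$, and isolating the anti-derivatives $\int_0^\xi w_{0i}(y)\,dy$ produces $\eta_{\infty,1}$ together with the double-integral contributions in $\eta_{\infty,2}$, modulo $O(\delta^2)$ corrections absorbed into the exponential tail. Tracking all oscillatory remainders and verifying that they remain $O(\delta e^{-\theta t})$ in $t$ is the most technical aspect.
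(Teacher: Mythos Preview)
Your treatment of the first two stages---existence of $\eta_0$ and the bound $|\eta'(t)|\le C\delta e^{-\theta t}$---matches the paper's (brief) argument almost verbatim: Rankine--Hugoniot kills the constant part of the numerator and Lemma~\ref{yl2} controls the rest.

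The identification of $\eta_\infty$, however, has a genuine gap. You propose to compute $\eta_\infty=\eta_0+\int_0^\infty\eta'(\tau)\,d\tau$ by splitting $\eta'(t)$ into ``period-average flux'' terms plus an oscillatory remainder (the mean-zero part of $v_i$ against the localized weight $g_\eta'$), and then discarding the remainder as $O(\delta^2)$ or as ``absorbed into the exponential tail.'' This is not justified: that remainder is $O(\delta e^{-\theta t})$ in $t$, so its time integral is $O(\delta)$, which is the \emph{same} order as the terms you keep (in particular the double integrals $\frac{1}{p_i}\int_0^{p_i}\!\int_0^\xi w_{0i}$ in $\eta_{\infty,2}$). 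You have no mechanism that shows this integrated remainder either vanishes or produces exactly those double-integral contributions; the sentence ``modulo $O(\delta^2)$ corrections'' would make the formula for $\eta_\infty$ only approximate, whereas the lemma asserts it exactly.

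The paper avoids this difficulty by \emph{not} integrating $\eta'$ directly. Instead it integrates the identity $\mathfrak E(\xi,\tau)=0$ (built from the equations for $u_l,u_r$) over moving space-time trapezoids $\Omega_y^N$ whose lateral edges are $\xi=\eta(\tau)+(-N+y)p_l$ and $\xi=\eta(\tau)+(N+y)p_r$, applies Green's formula, and then averages over $y\in(0,1)$ and sends $N\to\infty$. The $y$-average turns the lateral boundary contributions into the exact period averages of $G(u_i)=f(u_i)-su_i$, while the top/bottom contributions give $(\bar u_l-\bar u_r)(\eta(t)-\eta_0)$ plus the initial-data terms; sending $t\to\infty$ then yields the stated formula with no leftover $O(\delta)$ error. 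If you want to salvage your direct-integration approach, you would need to show precisely that the oscillatory remainder integrates to the double-integral terms in $\eta_{\infty,2}$, which is essentially what the $y$-averaging trick accomplishes cleanly.
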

The proof is left in Section \ref{section6}.
\subsection{Main Theorems}
Based on Lemma \ref{yl4}, we know \eqref{x2.10} holds for any $t\geq 0$ provided that the solution of the equation \eqref{1.1} globally exists. Then we can define the anti-derivative  of the perturbation $ {\textcolor{blue}{\psi }}\textcolor{blue}{({\textcolor{blue}{\xi}},t)}:{\textcolor{red}{=}}u-\textcolor{blue}{U}\textcolor{blue}{({\textcolor{blue}{\xi}},t)}$ by
\begin{equation}
 {\textcolor{blue}{\Psi }}\textcolor{blue}{({\textcolor{blue}{\xi}},t)}:{\textcolor{red}{=}}\int_{ -\infty}^{{\textcolor{blue}{\xi}}} {\textcolor{blue}{\psi }}\textcolor{blue}{(y,t)}dy,
\end{equation}
so that ${\textcolor{blue}{\Psi }}$ belongs to some Sobolev space. We assume  that the initial data satisfies
\begin{equation}\label{x2017}
{\textcolor{blue}{\Psi }}_{0}({\textcolor{blue}{\xi}}):{\textcolor{red}{=}}{\textcolor{blue}{\Psi }}\textcolor{blue}{({\textcolor{blue}{\xi}},0)}\in H^{3}(\mathbb{R}).
\end{equation}
The first result is
\begin{thm}\label{theorem201}  If \eqref{x2.2}, \eqref{x2.3} and \eqref{x2017} hold, there exists a   positive constant $\epsilon_{0}$, such that if
\begin{equation}\label{9.1}
\left\|{\textcolor{blue}{\Psi }}_{0}\right\|_{3}^{2}+{\textcolor{red}{\delta}}_{0}\leq\epsilon_{0}^{2},
\end{equation}
then there exists a unique global solution of \eqref{1.1}, \eqref{1.2} satisfying
\begin{equation*}
\begin{aligned}
&\sup_{{\textcolor{blue}{\xi}} \in \mathbb{R}}|  (u-\textcolor{blue}{\phi}_{ {\textcolor{blue}{\eta }}_{\infty}})\textcolor{blue}{({\textcolor{blue}{\xi}},t)}|\rightarrow 0\quad \forall t\rightarrow \infty,
\end{aligned}
\end{equation*}
where $\textcolor{blue}{\phi}_{ {\textcolor{blue}{\eta }}_\infty}{\textcolor{red}{=}}\textcolor{blue}{\phi}({\textcolor{blue}{\xi}}- {\textcolor{blue}{\eta }}_\infty)$.
\end{thm}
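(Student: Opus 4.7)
The approach is the classical anti-derivative method adapted to handle periodic far-field perturbations via the ansatz $U$. Introducing $\psi(\xi,t):=u-U$ and its primitive $\Psi(\xi,t):=\int_{-\infty}^{\xi}\psi(y,t)\,dy$, Lemma \ref{yl4} together with the definition of $\eta(t)$ through \eqref{x2.12}-\eqref{x2.13} guarantees \eqref{x2.10} for all $t\ge 0$, so $\Psi(\pm\infty,t)=0$ and $\Psi(\cdot,t)\in H^{3}(\mathbb{R})$ is a reasonable ansatz. Subtracting \eqref{x2.8} from \eqref{s1.1} and integrating once in $\xi$, I would derive the scalar equation
\begin{equation*}
\Psi_{t}-s\Psi_{\xi}+f(U+\Psi_{\xi})-f(U)+\mu\Psi_{\xi\xi\xi\xi}-\gamma\Psi_{\xi\xi\xi}=-H(\xi,t),
\end{equation*}
where $H:=\int_{-\infty}^{\xi}h(y,t)\,dy$. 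Combining the explicit form of $h$ in \eqref{x2.9} with the exponential temporal decay of $u_{i}-\bar{u}_{i}$ from Lemma \ref{yl2} and the exponential spatial localization of $g_{\eta}^{(m)}$ from Lemma \ref{yl3}, one obtains $\|H(t)\|_{H^{k}}\le C\delta\, e^{-\theta t}$ for the relevant $k$.

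The main technical step is an a priori $H^{3}$ estimate for $\Psi$ on any interval where the smallness $N(t):=\sup_{[0,t]}\|\Psi\|_{3}+\delta$ stays below a fixed threshold. The basic $L^{2}$ estimate comes from multiplying by $\Psi$: the convexity of $f$ and $\phi'<0$ (Lemma \ref{yl1}) yield a good dissipation term $\int(-\phi_{\eta}')\Psi^{2}\,d\xi$ from the principal part of $f(U+\Psi_{\xi})-f(U)$, the viscosity produces $\gamma\|\Psi_{\xi\xi}\|^{2}$, the dispersion is skew-symmetric, and the source is absorbed via $|\langle \Psi,H\rangle|\le C\delta e^{-\theta t}\|\Psi\|$. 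Analogous estimates on $\Psi_{\xi},\Psi_{\xi\xi},\Psi_{\xi\xi\xi}$ after differentiating the equation and using Gagliardo-Nirenberg to handle the nonlinearity close the $H^{3}$ estimate under \eqref{9.1}. A standard local existence result in $H^{3}$ combined with these bounds yields a unique global solution with $\sup_{t\ge 0}\|\Psi(t)\|_{3}\le C\epsilon_{0}$ and $\int_{0}^{\infty}(\|\Psi_{\xi}\|^{2}+\|\Psi_{\xi\xi}\|_{2}^{2})\,dt<\infty$.

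For the uniform convergence I would decompose
\begin{equation*}
u-\phi_{\eta_{\infty}}=(u-U)+(U-\phi_{\eta})+(\phi_{\eta}-\phi_{\eta_{\infty}}).
\end{equation*}
The first term is controlled by the interpolation $\|\psi\|_{\infty}\le C\|\Psi_{\xi}\|^{1/2}\|\Psi_{\xi\xi}\|^{1/2}$; the time integrability of $\|\Psi_{\xi}\|^{2}$ together with uniform boundedness of $\frac{d}{dt}\|\Psi_{\xi}\|^{2}$ (derived from the equation) forces $\|\Psi_{\xi}(t)\|\to 0$, and similarly for $\|\Psi_{\xi\xi}(t)\|$, hence $\|\psi(t)\|_{\infty}\to 0$. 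The second piece, $U-\phi_{\eta}=(u_{l}-\bar{u}_{l})g_{\eta}+(u_{r}-\bar{u}_{r})(1-g_{\eta})$, decays exponentially by Lemma \ref{yl2}, and the third piece vanishes uniformly because $\eta(t)\to\eta_{\infty}$ exponentially (Lemma \ref{yl4}) and $\phi$ is uniformly continuous.

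The main obstacle is closing the energy estimates in the presence of both third-order dispersion and second-order viscosity: because $\mu\Psi_{\xi\xi\xi\xi}$ is skew-symmetric and supplies no dissipation, all four derivatives of $\Psi$ must be controlled simultaneously, and the nonlinear commutators generated by differentiating $f(U+\Psi_{\xi})-f(U)$ at top order threaten a loss of derivative. Closing the estimate requires carefully exploiting the $\gamma\|\partial_{\xi}^{k+2}\Psi\|^{2}$ dissipation at each level $k=0,1,2,3$, the exponential smallness of the source $H$, and the smallness of $N(t)$ to absorb cross terms; the dispersive term can be handled only after an integration by parts that shifts it onto lower-order good terms, which is the delicate bookkeeping that dominates the proof.
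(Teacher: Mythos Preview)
Your overall strategy matches the paper's: derive the integrated equation \eqref{x3.2} for $\Psi$, establish a priori $H^{3}$ estimates (the paper does this as the unweighted case $\alpha=\beta=0$ of Lemmas~\ref{yl9}--\ref{yl12}, yielding Proposition~\ref{pp3}), extend locally-in-time solutions globally, and then conclude via the decomposition $u-\phi_{\eta_\infty}=(u-U)+(U-\phi_{\eta_\infty})$ using Lemma~\ref{yl5} for the second piece. Your decay argument for $\|\psi\|_{L^\infty}$ via integrability of $\|\Psi_\xi\|^2$ in time is exactly the standard step the paper takes for granted.

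There is, however, a systematic off-by-one slip in your integrated equation: since $\Psi_\xi=\psi=u-U$, integrating the $\psi$-equation once in $\xi$ gives
\[
\Psi_t-s\Psi_\xi+\bigl[f(U+\Psi_\xi)-f(U)\bigr]+\mu\Psi_{\xi\xi\xi}-\gamma\Psi_{\xi\xi}=H,
\]
not $\mu\Psi_{\xi\xi\xi\xi}-\gamma\Psi_{\xi\xi\xi}$. Consequently the $L^2$ energy identity yields dissipation $\gamma\|\Psi_\xi\|^2$ (not $\gamma\|\Psi_{\xi\xi}\|^2$), and at level $k$ you gain $\gamma\|\partial_\xi^{k+1}\Psi\|^2$. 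The dispersive term $\mu\,\partial_\xi^{k+3}\Psi$ is indeed skew against $\partial_\xi^k\Psi$ and simply vanishes; there is no loss-of-derivative threat, and the ``delicate bookkeeping'' you anticipate in the last paragraph largely evaporates once the correct orders are in place. Fix the derivative count and your proposal is complete and coincides with the paper's proof.
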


\
In order to obtain time decay rate of the solution, we further assume that
\begin{equation}\label{x2.17}
 \exp \left(\frac{\alpha}{2}\left|{\textcolor{blue}{\xi}}-{\textcolor{blue}{\xi}}_{*}\right|\right)\frac{\partial^{i}}{\partial {\textcolor{blue}{\xi}}^{i}} {\textcolor{blue}{\Psi }}_{0}({\textcolor{blue}{\xi}}) \in L^{2}(\mathbb{R})(i{\textcolor{red}{=}}0,1,2,3),
\end{equation}
where ${\textcolor{blue}{\xi}}_{*}{\textcolor{red}{=}}{\textcolor{blue}{\xi}}^{*}+ {\textcolor{blue}{\eta }}_{\infty}$ and $f'(\textcolor{blue}{\phi}\left({\textcolor{blue}{\xi}}^{*}\right)){\textcolor{red}{=}}s, \quad 0<\alpha< \min\{\frac{2\textcolor{blue}{\gamma}}{3 \textcolor{blue}{\mu}},\sigma_{0}\}.$

\

The second result is

\begin{thm}\label{theorem2.1}  If \eqref{x2.2}, \eqref{x2.3}, \eqref{x2017}, \eqref{x2.17} and \eqref{9.1} hold,
then there exists a unique global solution of \eqref{1.1}, \eqref{1.2} satisfying
\begin{equation*}
\begin{aligned}
 &\sup_{{\textcolor{blue}{\xi}} \in \mathbb{R}}|  (u-\textcolor{blue}{\phi}_{ {\textcolor{blue}{\eta }}_{\infty}})\textcolor{blue}{({\textcolor{blue}{\xi}},t)} |  \leq Ce^{-\beta t},  \quad  \forall t>0.
\end{aligned}
\end{equation*}
Positive constants $\beta $   satisfy
\begin{equation}\label{x2.18}
\left\{\begin{array}{l}
0<\beta < \min\{C_{0}\alpha,\theta\}, \\
 ({C_{0}-\frac{\beta}{\alpha}})(3 \textcolor{blue}{\mu}-  \frac{2 \textcolor{blue}{\gamma}}{\alpha}) +(\alpha\textcolor{blue}{\mu} +\textcolor{blue}{\gamma})^2<0 ,
\end{array}\right.
\end{equation}

\color{blue}
Here
\begin{equation}\label{x2.18}
 \begin{array}{l}
 \beta {\textcolor{red}{=}}\min\{C_{0}\alpha,\theta,C_0-\frac{\alpha(\alpha\textcolor{blue}{\mu} +\textcolor{blue}{\gamma})^{2}}{2\textcolor{blue}{\gamma}- 3 \alpha \textcolor{blue}{\mu}}\},
\end{array}
\end{equation}
\color{black}
where $C_{0}:{\textcolor{red}{=}}G\cdot\left\{\frac{B}{\alpha},\min \left\{\left|\textcolor{blue}{\phi}\left({\textcolor{blue}{\xi}}^{*}-1\right)-\textcolor{blue}{\phi}\left({\textcolor{blue}{\xi}}^{*}\right)\right|,\left|\textcolor{blue}{\phi}\left({\textcolor{blue}{\xi}}^{*}+1\right)-\textcolor{blue}{\phi}\left({\textcolor{blue}{\xi}}^{*}\right)\right|\right\}\right\}$, with $G:{\textcolor{red}{=}}$
$\min_{u \in\left[\bar{u}_{r},\bar{u}_{l}\right]}\left|f''(u)\right|$,
$B:{\textcolor{red}{=}}\min_{{\textcolor{blue}{\xi}}\in\left[{\textcolor{blue}{\xi}}^{*}-1,{\textcolor{blue}{\xi}}^{*}+1\right]}\left|\textcolor{blue}{\phi}'({\textcolor{blue}{\xi}})\right|$.
\end{thm}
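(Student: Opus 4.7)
The plan is to extend the anti-derivative framework set up in Section \ref{section2} to a weighted $H^3$ energy estimate adapted to the sonic point $\xi_*$, and then read the decay rate off the resulting differential inequality. First I subtract the ansatz identity \eqref{x2.8} from \eqref{s1.1}: with $\psi:=u-U$,
\begin{equation*}
\psi_t-s\psi_\xi+(f(u)-f(U))_\xi+\mu\psi_{\xi\xi\xi}-\gamma\psi_{\xi\xi}=-h.
\end{equation*}
Integrating in $\xi$ from $-\infty$, which is justified by $\Psi(\pm\infty,t)=0$ (guaranteed by the shift equation in Lemma \ref{yl4}), and Taylor-expanding $f(u)-f(U)=f'(U)\Psi_\xi+O(\Psi_\xi^2)$, then replacing $f'(U)$ by $f'(\phi_{\eta_\infty})$ up to errors of size $|U-\phi_{\eta_\infty}|=O(e^{-\theta t})$ controlled by Lemmas \ref{yl2} and \ref{yl4}, one arrives at a perturbation equation
\begin{equation*}
\Psi_t+(f'(\phi_{\eta_\infty})-s)\Psi_\xi-\gamma\Psi_{\xi\xi}+\mu\Psi_{\xi\xi\xi}=N(\Psi_\xi)+H(\xi,t),
\end{equation*}
where $N$ is quadratic in $\Psi_\xi$ and $H$ gathers all remaining source terms, each of which decays pointwise like $e^{-\theta t}$.

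The core step is a weighted $L^2$ estimate with $W(\xi):=e^{\alpha|\xi-\xi_*|}$; the weight is tailored to the integrability assumption \eqref{x2.17}. Multiplying by $W\Psi$ and integrating by parts, the convection term contributes the coercive piece
\begin{equation*}
\tfrac{1}{2}\int\bigl(W(s-f'(\phi_{\eta_\infty}))\bigr)'\Psi^2\,d\xi=\tfrac{1}{2}\int\bigl[\alpha\,|s-f'(\phi_{\eta_\infty})|-f''(\phi_{\eta_\infty})\phi'_{\eta_\infty}\bigr]W\Psi^2\,d\xi.
\end{equation*}
Using Lemma \ref{yl1}, the strict convexity of $f$, and the sonic identity $f'(\phi(\xi^*))=s$, the bracket is uniformly positive: $s-f'(\phi_{\eta_\infty})$ has the same sign as $\xi-\xi_*$, its derivative grows at rate at least $GB$ near $\xi_*$ (this produces the candidate $B/\alpha$ inside $C_0$), and it stays uniformly bounded below outside $[\xi_*-1,\xi_*+1]$ (producing the other candidate in $C_0$). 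The viscous term contributes $\gamma\!\int\! W\Psi_\xi^2\,d\xi$ together with corrections of size $\alpha^2\!\int\! W\Psi^2\,d\xi$, whereas the dispersive term contributes $\tfrac{3\mu}{2}\!\int\! W'\Psi_\xi^2\,d\xi$ and cross terms of size $\alpha\mu\,W'\Psi\Psi_{\xi\xi}$ and $\gamma W''\Psi^2$. Since $W'=\alpha\,\mathrm{sgn}(\xi-\xi_*)\,W$ is sign-indefinite, the coefficient of $W\Psi_\xi^2$ can drop to $\gamma-\tfrac{3}{2}\mu\alpha$, forcing $\alpha<\tfrac{2\gamma}{3\mu}$; the remaining $(\alpha\mu+\gamma)^2$ cross terms must then be absorbed into the diagonal entries of the resulting $2\times 2$ quadratic form in $(W^{1/2}\Psi,W^{1/2}\Psi_\xi)$, which is exactly the algebraic inequality encoded in \eqref{x2.18}.

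The same procedure is then propagated to $\partial_\xi\Psi,\partial_\xi^2\Psi,\partial_\xi^3\Psi$, which the third-order dispersion makes unavoidable. Setting $E(t):=\sum_{i=0}^{3}\|W^{1/2}\partial_\xi^i\Psi\|^2$, the smallness \eqref{9.1} absorbs the nonlinear term $N$ and the source $H$ is estimated by $Ce^{-\theta t}$, producing
\begin{equation*}
\frac{d}{dt}E(t)+2\beta E(t)\le C\epsilon_0\,E(t)+Ce^{-2\theta t},
\end{equation*}
which, after absorbing the first term on the right and using $\beta\le\theta$, integrates to $E(t)\le C(E(0)+1)e^{-2\beta t}$. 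Finally, since $u-\phi_{\eta_\infty}=\Psi_\xi+(U-\phi_{\eta_\infty})$, the Sobolev embedding $\|\Psi_\xi\|_{L^\infty}\le C\|\Psi_\xi\|_{H^1}\le CE(t)^{1/2}$ together with the exponential decay of $U-\phi_{\eta_\infty}$ from Lemmas \ref{yl2} and \ref{yl4} delivers the pointwise bound $\sup_\xi|(u-\phi_{\eta_\infty})(\xi,t)|\le Ce^{-\beta t}$. The principal obstacle is the weighted estimate itself: the sign-indefinite dispersive contribution $\tfrac{3\mu}{2}\!\int\! W'\Psi_\xi^2\,d\xi$ has to be dominated by the viscous dissipation $\gamma\!\int\! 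W\Psi_\xi^2\,d\xi$ together with the sonic gain $\alpha C_0\!\int\! W\Psi^2\,d\xi$, and it is precisely this three-way balance that \eqref{x2.18} is designed to capture.
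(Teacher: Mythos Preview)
Your proposal is correct and follows essentially the same route as the paper. The paper multiplies \eqref{x3.2} by $2e^{\beta t}e^{\alpha|\xi-\xi_*|}\Psi$ and integrates over $[0,t]\times\mathbb{R}$, obtaining precisely the $2\times2$ quadratic form in $(\Psi,\Psi_\xi)$ whose positive definiteness is the condition \eqref{x2.18}; it then bootstraps to $\partial_\xi^k\Psi$, $k=1,2,3$, using the time-integrated dissipation from the preceding step, and closes with Sobolev embedding plus the bound $|U-\phi_{\eta_\infty}|\le C\delta e^{-\theta t}$. Your Gr\"onwall formulation $E'(t)+2\beta E(t)\le\cdots$ is the differential version of the same computation; to make that inequality literally hold for the full $H^3$ energy you would need a weighted combination $\sum_i\lambda_i\|W^{1/2}\partial_\xi^i\Psi\|^2$ with $\lambda_0\gg\lambda_1\gg\lambda_2\gg\lambda_3$ rather than the straight sum, since the convective damping $A_\alpha(\xi)\ge C_0\alpha$ appears only at order zero---the paper sidesteps this by building the $e^{\beta t}$ weight directly into the multiplier. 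One detail worth making explicit: the source $H$ must also decay spatially like $e^{-\sigma_0|\xi-\eta(t)|}$ (this is the content of the paper's error-term lemma), otherwise $\int W H^2\,d\xi$ is infinite; this is why the hypothesis $\alpha<\sigma_0$ in \eqref{x2.17} is needed.
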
\color{black}
\section{Reformulation of the Problem}\label{section3}
Subtracting \eqref{x2.8} from \eqref{s1.1} and integrating the resulting system with respect to ${\textcolor{blue}{\xi}}$, we have  that
\begin{equation}\label{x3.2}
{\textcolor{blue}{\Psi }}_{t}-s{\textcolor{blue}{\Psi }}_{{\textcolor{blue}{\xi}}}+[f(u)-f(\textcolor{blue}{U})] +\textcolor{blue}{\mu} {\textcolor{blue}{\Psi }}_{{\textcolor{blue}{\xi}} {\textcolor{blue}{\xi}} {\textcolor{blue}{\xi}}}-\textcolor{blue}{\gamma} {\textcolor{blue}{\Psi }}_{{\textcolor{blue}{\xi}} {\textcolor{blue}{\xi}}}{\textcolor{red}{=}}H,
\end{equation}
where $H:{\textcolor{red}{=}}\int_{-\infty}^{{\textcolor{blue}{\xi}}}-h\textcolor{blue}{({\textcolor{blue}{\xi}},t)} d{\textcolor{blue}{\xi}}.$ We show the following decay properties of the error term of $H$.
\begin{lma}\label{yl6}
The error term $H$ satisfies:
\begin{equation}
\left|\frac{\partial^{j}H }{\partial {\textcolor{blue}{\xi}}^{j}}\right|  \leq C {\textcolor{red}{\delta}}   e^{-\theta t} e^{-\sigma_0|{\textcolor{blue}{\xi}}- {\textcolor{blue}{\eta }}(t)|},\quad \quad  j{\textcolor{red}{=}}0,1,2,3.
\end{equation}
\end{lma}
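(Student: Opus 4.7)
The plan is to prove the pointwise bound $|h|\le C\delta e^{-\theta t}e^{-\sigma_0|\xi-\eta(t)|}$ on the integrand $h$ of \eqref{x2.9} first, and then to transfer it to $H=-\int_{-\infty}^\xi h\,d\xi'$ using the key identity $\int_{-\infty}^\infty h\,d\xi=0$, which is precisely the property built into the ODE \eqref{x2.12} defining $\eta$. The three smallness ingredients available are Lemma \ref{yl3} (exponential spatial decay of every $g_\eta^{(k)}$ and of $g_\eta(1-g_\eta)$ in $|\xi-\eta|$), Lemma \ref{yl2} (exponential $t$-decay $\|v_i^{(k)}\|_{L^\infty}\le C\delta e^{-\theta t}$ for the periodic residuals $v_i:=u_i-\bar u_i$), and Lemma \ref{yl4} ($|\eta'(t)|\le C\delta e^{-\theta t}$).

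To bound $h$ pointwise, I would first observe that in the \emph{baseline} regime $v_l\equiv v_r\equiv 0$, $\eta'\equiv 0$ (so $U=\phi_\eta$) a direct substitution into \eqref{x2.9} together with the profile equation \eqref{x2.2} and the Rankine--Hugoniot identity \eqref{x2.3} yields $h\equiv 0$. Writing $u_l=\bar u_l+v_l$, $u_r=\bar u_r+v_r$, $U=\phi_\eta+V$ with $V=v_lg_\eta+v_r(1-g_\eta)$, and Taylor-expanding
\begin{equation*}
f(U)-f(u_l)g_\eta-f(u_r)(1-g_\eta)=g_\eta(1-g_\eta)(u_l-u_r)\left[f'(u_r)-f'(u_l)\right]+O\!\left(g_\eta(1-g_\eta)(u_l-u_r)^2\right),
\end{equation*}
the difference $h$ becomes a polynomial in $v_l,v_r,v_{l,\xi},v_{r,\xi},\ldots$ and $\eta'(t)$ (each factor bounded by $C\delta e^{-\theta t}$) with coefficients that are products of $g_\eta$, $1-g_\eta$, $g_\eta(1-g_\eta)$ and $g_\eta^{(k)}$, each bounded pointwise by $Ce^{-\sigma_0|\xi-\eta|}$. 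Summing the monomials yields the claimed pointwise bound on $h$.

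For $j=0$, the identity $\int_{-\infty}^\infty h\,d\xi=0$ lets me write $H=-\int_{-\infty}^\xi h\,d\xi'=\int_\xi^\infty h\,d\xi'$; choosing the representative matching the sign of $\xi-\eta(t)$ and invoking the pointwise bound just established on $h$ yields, via $\int_{-\infty}^\xi e^{\sigma_0(\xi'-\eta)}d\xi'=\sigma_0^{-1}e^{\sigma_0(\xi-\eta)}$ and its mirror for $\xi\ge\eta(t)$, the bound $|H|\le C\delta e^{-\theta t}e^{-\sigma_0|\xi-\eta(t)|}$. For $j=1,2,3$ I would use $\partial_\xi^jH=-\partial_\xi^{j-1}h$; differentiating the polynomial representation of $h$ only redistributes $\xi$-derivatives among the small factors (each additional $\partial_\xi$ falling on $v_i$ is still $O(\delta e^{-\theta t})$ in $L^\infty$ by Lemma \ref{yl2} applied to higher Sobolev norms of $u_{i0}-\bar u_i$) and the localized factors ($\partial_\xi g_\eta^{(k)}=g_\eta^{(k+1)}$ still obeys Lemma \ref{yl3}), giving the same monomial estimate on $\partial_\xi^{j-1}h$.

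The main obstacle is the baseline cancellation $h|_{\text{baseline}}\equiv 0$: this requires substituting $U=\phi_\eta$ into \eqref{x2.9} and verifying, using $\phi_\eta=\bar u_r+(\bar u_l-\bar u_r)g_\eta$, the Rankine--Hugoniot condition $f(\bar u_l)-f(\bar u_r)=s(\bar u_l-\bar u_r)$, and \eqref{x2.2}, that the individually $O(1)$-in-$t$ contributions coming from the $\phi_\eta$-only part of the ansatz cancel algebraically against one another. Once this identity is in place the remaining estimates are routine Leibniz-rule bookkeeping, and the regularity of the $v_i$ needed for the cases $j=1,2,3$ follows from the smoothing of the KdV--Burgers operator on any period.
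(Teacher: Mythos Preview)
Your approach is correct but organized differently from the paper's. The paper does \emph{not} first bound $h$ pointwise and then integrate; instead it evaluates $H=-\int_{-\infty}^{\xi}h$ directly for $\xi<\eta(t)$ (and symmetrically for $\xi>\eta(t)$), exploiting that the total-derivative part $\{\cdots\}_\xi$ of $h$ integrates explicitly and that the baseline pieces of the remaining integral combine, via the integrated profile equation \eqref{x2.2}, to the closed form $-f(U)+f(u_l)+f(\phi_\eta)-f(\bar u_l)$, which is manifestly $O(\delta e^{-\theta t})\cdot(1-g_\eta)$. Your route---baseline cancellation $h|_{\text{baseline}}=0$ followed by a monomial-by-monomial estimate of $h$---is cleaner conceptually but needs one point made precise: in your list of coefficient factors, $g_\eta$ and $1-g_\eta$ by themselves are \emph{not} $O(e^{-\sigma_0|\xi-\eta|})$; only $g_\eta(1-g_\eta)$ and $g_\eta^{(k)}$, $k\ge1$, are. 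What has to be checked (and is true) is that after subtracting the baseline every surviving monomial carries at least one such localizing factor; for instance, $[f(U)-f(\phi_\eta)]-[f(u_l)-f(\bar u_l)]g_\eta-[f(u_r)-f(\bar u_r)](1-g_\eta)$ is not obviously localized term-by-term, but a second look shows each piece hides a factor $g_\eta(1-g_\eta)$. This is exactly the cancellation the paper extracts by rewriting $H_1$ through the profile equation, so the two arguments are equivalent in content; yours trades the algebraic shortcut of the profile identity for more explicit Leibniz bookkeeping.
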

The proof is left in Section \ref{section5}. We will seek the solution in the functional space $X_{\epsilon}(0,T)$ for any $0\leq T< +\infty $,
\begin{equation*}
X_{\epsilon}(0,T):{\textcolor{red}{=}}\left\{  {\textcolor{blue}{\Psi }} \in C ([0,T];H^3)\Big| {\textcolor{blue}{\Psi }}_{{\textcolor{blue}{\xi}}} \in  L^2(0,T;H^3),\sup_{0\leq t\leq T}\| {\textcolor{blue}{\Psi }}\|_3(t)\leq\epsilon  \right\}.
\end{equation*}
\begin{pro} {\label{pp3}} $(a$ priori estimate) Suppose  that $ {\textcolor{blue}{\Psi }}\in X_{\epsilon}(0,T)$ is the solution of \eqref{x3.2}, \eqref{x2017} for some time $T>0$. There exists a positive constant $\epsilon_0$ independent of  $T$, such that if
\begin{equation*}
\sup_{0\leq t\leq T}\|{\textcolor{blue}{\Psi }}(t)\|_{3}\leq \epsilon \leq \epsilon_{0}
\end{equation*}
for $t\in[0,T],$ then
\begin{equation}
\|{\textcolor{blue}{\Psi }}(t)\|_{3}^{2}+\int_{0}^{t}\left\|{\textcolor{blue}{\Psi }}_{{\textcolor{blue}{\xi}}}(\tau)\right\|_{3}^{2}
d \tau \leq  C_{1} \left(\left\|{\textcolor{blue}{\Psi }}_{0}\right\|_{3}^{2} +{\textcolor{red}{\delta}} \right),
\end{equation}
for any $t\geq 0$. Here $C_{1}$ is independent of $T$.
\end{pro}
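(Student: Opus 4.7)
The plan is to close the estimate by standard weighted energy estimates applied to the perturbation equation \eqref{x3.2} at every order $k=0,1,2,3$, exploiting three structural features: the viscous term $\gamma\Psi_{\xi\xi}$ is the sole source of dissipation; the dispersive operator $\mu\partial_\xi^{3}$ is skew-adjoint, so $\int \partial_\xi^{k}\Psi\cdot\mu\partial_\xi^{k+3}\Psi\,d\xi=0$ at every order; and the convective coefficient $f'(U)-s$ changes sign across the shock layer with a favourable orientation, thanks to the convexity of $f$ and Lemma \ref{yl1}. Throughout, the ansatz $U$ is exponentially close to the shifted profile $\phi_{\eta}$ by Lemmas \ref{yl3} and \ref{yl2}, and the inhomogeneity $H$ satisfies the pointwise decay $|\partial_\xi^{j}H|\le C\delta e^{-\theta t}e^{-\sigma_{0}|\xi-\eta(t)|}$ from Lemma \ref{yl6}.

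\textbf{Step 1 (basic $L^{2}$ estimate).} Multiply \eqref{x3.2} by $\Psi$ and integrate in $\xi$. The dispersive contribution vanishes after integration by parts. Writing
\begin{equation*}
-s\Psi_{\xi}+f(u)-f(U)=(f'(U)-s)\Psi_{\xi}+N_{0},\qquad |N_{0}|\le C\Psi_{\xi}^{2},
\end{equation*}
and integrating the linear piece by parts yields
\begin{equation*}
\tfrac{1}{2}\tfrac{d}{dt}\|\Psi\|^{2}+\gamma\|\Psi_{\xi}\|^{2}-\tfrac{1}{2}\!\int f''(U)U_{\xi}\Psi^{2}\,d\xi=\int H\Psi\,d\xi-\int N_{0}\Psi\,d\xi.
\end{equation*}
Since $U=\phi_{\eta}+O(\delta e^{-\theta t})$ and $\phi'<0$, $f''>0$, the weighted term is non-negative up to an exponentially small error. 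The source is controlled via Lemma \ref{yl6} and Cauchy--Schwarz by $C\delta e^{-\theta t}\|\Psi\|$ and absorbed by Young, while the nonlinearity obeys $|\int N_{0}\Psi|\le C\|\Psi_{\xi}\|_{L^{\infty}}\|\Psi_{\xi}\|\,\|\Psi\|\le C\epsilon\|\Psi_{\xi}\|^{2}$.

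\textbf{Step 2 (higher-order estimates and closing).} For $k=1,2,3$, apply $\partial_{\xi}^{k}$ to \eqref{x3.2} and test with $\partial_{\xi}^{k}\Psi$. The dispersive term again integrates to zero, and the viscous term furnishes the dissipation $\gamma\|\partial_{\xi}^{k+1}\Psi\|^{2}$. The commutator $[\partial_{\xi}^{k},f'(U)\partial_{\xi}]$ involves up to four spatial derivatives of $U$, uniformly bounded in $t$ by Lemma \ref{yl2}; nonlinear remainders are products of derivatives of $\Psi$ of total order at most $k+1$, controlled in $L^{2}$ by Gagliardo--Nirenberg together with the bootstrap $\|\Psi\|_{3}\le\epsilon$, hence absorbable for $\epsilon_{0}$ small. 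The inhomogeneities $\int\partial_{\xi}^{k}H\,\partial_{\xi}^{k}\Psi\,d\xi$ are handled as in Step 1. Summing the four inequalities with weights chosen so that all $\epsilon$-borderline terms are absorbed into the dissipation and integrating in $t$ gives
\begin{equation*}
\|\Psi(t)\|_{3}^{2}+c\!\int_{0}^{t}\!\|\Psi_{\xi}(\tau)\|_{3}^{2}\,d\tau\le\|\Psi_{0}\|_{3}^{2}+C\delta\!\int_{0}^{\infty}\!e^{-\theta\tau}\,d\tau\le C_{1}\bigl(\|\Psi_{0}\|_{3}^{2}+\delta\bigr),
\end{equation*}
which is the stated bound with $C_{1}$ independent of $T$.

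\textbf{Main obstacle.} The delicate point is the zeroth-order estimate. The viscous term produces $\gamma\|\Psi_{\xi}\|^{2}$ only, so there is no direct dissipation of $\|\Psi\|^{2}$; its control must come from the weighted shock-layer integral $-\tfrac{1}{2}\int f''(U)U_{\xi}\Psi^{2}\,d\xi$, which is genuinely positive only if $U-\phi_{\eta}$ is small enough not to destroy the sign of the integrand. This is precisely where the smallness of $\delta$ and the exponential-in-time decay of Lemma \ref{yl2} are essential. A secondary technical point at the top order $k=3$ is controlling commutators that bring in $U_{\xi\xi\xi\xi}$, hence fourth-order derivatives of the periodic solutions $u_{l},u_{r}$, which is why the parabolic smoothing underlying Lemma \ref{yl2} must be tracked throughout the higher-order estimates.
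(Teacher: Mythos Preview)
Your approach is essentially the same as the paper's: it obtains Proposition~\ref{pp3} by specializing its weighted estimates (Lemmas~\ref{yl9}--\ref{yl12}) to $\alpha=\beta=0$, which amounts exactly to your order-by-order energy scheme exploiting skew-adjointness of $\mu\partial_\xi^3$, viscous dissipation, and the sign of $f''(\phi)\phi'$ at the base level. Two small corrections: the source term $\int H\Psi$ should be bounded as $\|\Psi\|_{L^\infty}\|H\|_{L^1}\le C\epsilon\delta e^{-\theta t}$ (as in the paper's estimate of $A_6$) rather than ``absorbed by Young'', since Young would leave a $\|\Psi\|^2$ residual with no matching dissipation; and the top-order commutators require only $U_{\xi\xi\xi}$ (or even $U_{\xi\xi}$ after the paper's integration-by-parts rearrangement), not $U_{\xi\xi\xi\xi}$.
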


\section{Weighted Estimates}\label{section4}
Throughout this  section, we assume that the problem \eqref{x3.2}, \eqref{x2017} has a solution ${\textcolor{blue}{\Psi }}\in X_{\epsilon}(0,T) $.
At the begin of this section, we give a lemma and some equalities which will be use later.
\begin{lma}\label{yl5}
\textcolor{blue}{U}nder the same condition of Lemma \ref{yl4}, one gets
\begin{equation}
\left|\frac{\partial^{i}}{\partial{\textcolor{blue}{\xi}}^{i}}(\textcolor{blue}{U} -\textcolor{blue}{\phi}_{ {\textcolor{blue}{\eta }}_{\infty}})\right| \leq C {\textcolor{red}{\delta}} e^{-\theta t}, \quad i{\textcolor{red}{=}}0,1.
\end{equation}
\end{lma}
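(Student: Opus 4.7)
The plan is to exploit the algebraic identity
$$\bar{u}_{l}\, g_{\eta} + \bar{u}_{r}(1 - g_{\eta}) = \phi(\xi - \eta(t)),$$
which follows directly from the definition $g(\zeta) = (\phi(\zeta) - \bar{u}_{r})/(\bar{u}_{l} - \bar{u}_{r})$. Adding and subtracting this quantity in \eqref{x2.6} produces the clean decomposition
$$U - \phi_{\eta_{\infty}} = (u_{l} - \bar{u}_{l})\, g_{\eta} + (u_{r} - \bar{u}_{r})(1 - g_{\eta}) + \bigl[\phi(\xi - \eta(t)) - \phi(\xi - \eta_{\infty})\bigr],$$
which isolates the dependence on the periodic perturbations (first two terms) from the dependence on the shift dynamics (third term).

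For the case $i = 0$, I would bound each piece separately. By Lemma \ref{yl2} with $k = 0$ applied to $u_{l}$ and $u_{r}$, together with the uniform bounds $0 \le g_{\eta} \le 1$ from Lemma \ref{yl3}, the first two summands are controlled in $L^{\infty}$ by $C\delta\, e^{-\theta t}$. For the third summand, the mean value theorem combined with Lemma \ref{yl4} yields
$$\bigl|\phi(\xi - \eta(t)) - \phi(\xi - \eta_{\infty})\bigr| \le \|\phi'\|_{L^{\infty}}\, |\eta(t) - \eta_{\infty}| \le C\delta\, e^{-\theta t},$$
where $\|\phi'\|_{L^{\infty}}$ is finite by Lemma \ref{yl1} and Lemma \ref{yl3}.

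For $i = 1$, I would differentiate the decomposition in $\xi$. The product rule gives terms of the form $(u_{l})_{\xi}\, g_{\eta} + (u_{l} - \bar{u}_{l})\, g_{\eta}'$ (and analogously for the $r$-side), plus $\phi'(\xi - \eta(t)) - \phi'(\xi - \eta_{\infty})$. Lemma \ref{yl2} with $k = 1$ handles the derivatives $(u_{l})_{\xi}$ and $(u_{r})_{\xi}$, Lemma \ref{yl3} provides the uniform bound $|g_{\eta}'| \le C$, and another application of the mean value theorem to $\phi'$, using the $C^{1}$ smoothness and the exponential decay of $\phi''$ guaranteed by Lemma \ref{yl3}, together with Lemma \ref{yl4}, disposes of the last piece.

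No serious obstacle is expected: the whole argument is a direct triangle-inequality estimate once the right algebraic splitting is in hand. The only care required is in checking that every factor is either a bounded quantity (coming from Lemma \ref{yl1}--\ref{yl3}) or carries the small weight $\delta\, e^{-\theta t}$ (coming from Lemma \ref{yl2} or Lemma \ref{yl4}), so that the $L^{\infty}$ bound on the product is of the claimed form.
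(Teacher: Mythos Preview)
Your proof is correct and follows essentially the same approach as the paper: the paper's decomposition $U - \phi_{\eta_\infty} = W_1 + W_2$ with $W_1 = \phi_\eta - \phi_{\eta_\infty}$ and $W_2 = (u_l - \bar u_l)\,g_\eta + (u_r - \bar u_r)(1 - g_\eta)$ is exactly your splitting (just written in the reverse order and with $g_\eta$ expanded back in terms of $\phi_\eta$), and both proofs then bound $W_1$ via Lemma~\ref{yl4} with the mean value theorem and $W_2$ via Lemma~\ref{yl2}. Your write-up is in fact more explicit about the $i=1$ case than the paper's.
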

\begin{proof}
By directly calculate, one gets
\begin{equation*}
\begin{aligned}
&\textcolor{blue}{U} -\textcolor{blue}{\phi}_{ {\textcolor{blue}{\eta }}_{\infty}}\\
{\textcolor{red}{=}}& [\textcolor{blue}{\phi}_{ {\textcolor{blue}{\eta }}}({\textcolor{blue}{\xi}})-\textcolor{blue}{\phi}_{ {\textcolor{blue}{\eta }}_{\infty}}({\textcolor{blue}{\xi}})]+\left\{(u_{l}-\bar{u}_{l})\frac{\textcolor{blue}{\phi}_{ {\textcolor{blue}{\eta }}}({\textcolor{blue}{\xi}})-\bar{u}_{r}}{\bar{u}_{l}-\bar{u}_{r}}-
(u_{r}-\bar{u}_{r})\frac{\textcolor{blue}{\phi}_{ {\textcolor{blue}{\eta }}}({\textcolor{blue}{\xi}})-\bar{u}_{l}}{\bar{u}_{l}-\bar{u}_{r}}\right\}\\
 :{\textcolor{red}{=}}&W_{1}+W_{2}.
\end{aligned}
\end{equation*}
With the aid of Lemma \ref{yl4}, we have
\begin{equation*}
\begin{aligned}
|W_{1}|,|W_{1{\textcolor{blue}{\xi}}}| \leq &C  | {\textcolor{blue}{\eta }} (t)- {\textcolor{blue}{\eta }}_{\infty}|\leq C  {\textcolor{red}{\delta}}  e^{-\theta t }.
\end{aligned}
\end{equation*}

By  Lemma \ref{yl2}, it follows that $|W_{2}| , |W_{2{\textcolor{blue}{\xi}}}|  \leq C  {\textcolor{red}{\delta}}  e^{-\theta t }.$ Thus the proof of Lemma  \ref{yl5} is obtained.
 \end{proof}
Lagrange mean value theorem gives that
\begin{align*}
\begin{split}
 2 {\textcolor{blue}{\Psi }} \left[f\left(\textcolor{blue}{U} +{\textcolor{blue}{\Psi }}_{{\textcolor{blue}{\xi}}}\right)-f(\textcolor{blue}{U} )\right] {\textcolor{red}{=}}&\left\{f'(\textcolor{blue}{U} ) {\textcolor{blue}{\Psi }}^{2}\right\}_{{\textcolor{blue}{\xi}}}-f''(\textcolor{blue}{U} ) \textcolor{blue}{U}_{{\textcolor{blue}{\xi}}}  {\textcolor{blue}{\Psi }}^{2}+f''\left({\textcolor{blue}{\xi}}_{1}\right) {\textcolor{blue}{\Psi }} {\textcolor{blue}{\Psi }}_{{\textcolor{blue}{\xi}}}^{2},\\
 2 {\textcolor{blue}{\Psi }}_{{\textcolor{blue}{\xi}}}\left[f\left(\textcolor{blue}{U}+{\textcolor{blue}{\Psi }}_{{\textcolor{blue}{\xi}}}\right)-f(\textcolor{blue}{U})\right]_{{\textcolor{blue}{\xi}}}{\textcolor{red}{=}}&2 {\textcolor{blue}{\Psi }}_{{\textcolor{blue}{\xi}}}\left[f'\left({\textcolor{blue}{\xi}}_{2}\right) {\textcolor{blue}{\Psi }}_{{\textcolor{blue}{\xi}}}\right]_{{\textcolor{blue}{\xi}}}{\textcolor{red}{=}}\left\{2 f'\left({\textcolor{blue}{\xi}}_{2}\right) {\textcolor{blue}{\Psi }}_{{\textcolor{blue}{\xi}}}^{2}\right\}_{{\textcolor{blue}{\xi}}}-2 f'\left({\textcolor{blue}{\xi}}_{2}\right) {\textcolor{blue}{\Psi }}_{{\textcolor{blue}{\xi}}} {\textcolor{blue}{\Psi }}_{{\textcolor{blue}{\xi}} {\textcolor{blue}{\xi}}},\\
 2 {\textcolor{blue}{\Psi }}_{{\textcolor{blue}{\xi}} {\textcolor{blue}{\xi}}}\left[f\left(\textcolor{blue}{U}+{\textcolor{blue}{\Psi }}_{{\textcolor{blue}{\xi}}}\right)-f(\textcolor{blue}{U})\right]_{{\textcolor{blue}{\xi}} {\textcolor{blue}{\xi}}}
 {\textcolor{red}{=}}& \left\{2 {\textcolor{blue}{\Psi }}_{{\textcolor{blue}{\xi}} {\textcolor{blue}{\xi}}}\left[f\left(\textcolor{blue}{U}+{\textcolor{blue}{\Psi }}_{{\textcolor{blue}{\xi}}}\right)-f(\textcolor{blue}{U})\right]_{{\textcolor{blue}{\xi}}}\right\}_{{\textcolor{blue}{\xi}}}\\
  &-2 {\textcolor{blue}{\Psi }}_{{\textcolor{blue}{\xi}} {\textcolor{blue}{\xi}} {\textcolor{blue}{\xi}}}\left[f''\left({\textcolor{blue}{\xi}}_{3}\right) \textcolor{blue}{U}_{{\textcolor{blue}{\xi}}} {\textcolor{blue}{\Psi }}_{{\textcolor{blue}{\xi}}}+f'\left(\textcolor{blue}{U}+{\textcolor{blue}{\Psi }}_{{\textcolor{blue}{\xi}}}\right) {\textcolor{blue}{\Psi }}_{{\textcolor{blue}{\xi}} {\textcolor{blue}{\xi}}}\right],\\
 2 {\textcolor{blue}{\Psi }}_{{\textcolor{blue}{\xi}} {\textcolor{blue}{\xi}}{\textcolor{blue}{\xi}}}\left[f\left(\textcolor{blue}{U}+{\textcolor{blue}{\Psi }}_{{\textcolor{blue}{\xi}}}\right)-f(\textcolor{blue}{U})\right]_{{\textcolor{blue}{\xi}}{\textcolor{blue}{\xi}} {\textcolor{blue}{\xi}}}
{\textcolor{red}{=}} & \left\{2 {\textcolor{blue}{\Psi }}_{{\textcolor{blue}{\xi}}{\textcolor{blue}{\xi}} {\textcolor{blue}{\xi}}}\left[f\left(\textcolor{blue}{U}+{\textcolor{blue}{\Psi }}_{{\textcolor{blue}{\xi}}}\right)-f(\textcolor{blue}{U})\right]_{{\textcolor{blue}{\xi}} {\textcolor{blue}{\xi}}}\right\}_{{\textcolor{blue}{\xi}}}
 \\&-2 {\textcolor{blue}{\Psi }}_{{\textcolor{blue}{\xi}} {\textcolor{blue}{\xi}} {\textcolor{blue}{\xi}} {\textcolor{blue}{\xi}}}\left[f''\left({\textcolor{blue}{\xi}}_{3}\right) \textcolor{blue}{U}_{{\textcolor{blue}{\xi}}{\textcolor{blue}{\xi}}} {\textcolor{blue}{\Psi }}_{{\textcolor{blue}{\xi}}}+f'\left(\textcolor{blue}{U}+{\textcolor{blue}{\Psi }}_{{\textcolor{blue}{\xi}}}\right) {\textcolor{blue}{\Psi }}_{{\textcolor{blue}{\xi}} {\textcolor{blue}{\xi}}{\textcolor{blue}{\xi}}}+f''\left(\textcolor{blue}{U}+{\textcolor{blue}{\Psi }}_{{\textcolor{blue}{\xi}}}\right) {\textcolor{blue}{\Psi }}^{2}_{{\textcolor{blue}{\xi}}{\textcolor{blue}{\xi}}}  \right]\\
 &-2 {\textcolor{blue}{\Psi }}_{{\textcolor{blue}{\xi}} {\textcolor{blue}{\xi}} {\textcolor{blue}{\xi}} {\textcolor{blue}{\xi}}}[2 f''\left(\textcolor{blue}{U}+{\textcolor{blue}{\Psi }}_{{\textcolor{blue}{\xi}}}\right) {\textcolor{blue}{\Psi }}_{{\textcolor{blue}{\xi}} {\textcolor{blue}{\xi}}} \textcolor{blue}{U}_{{\textcolor{blue}{\xi}}} +f'''\left({\textcolor{blue}{\xi}}_{4}\right) {\textcolor{blue}{\Psi }}_{{\textcolor{blue}{\xi}}} \textcolor{blue}{U}_{{\textcolor{blue}{\xi}}}^{2}],
 \end{split}&
\end{align*}
where  ${\textcolor{blue}{\xi}}_{i}$   between $\textcolor{blue}{U} $ and $\textcolor{blue}{U} +{\textcolor{blue}{\Psi }}_{{\textcolor{blue}{\xi}}}, i{\textcolor{red}{=}}1,2,3,4$.
\begin{lma}\label{yl9}
\textcolor{blue}{U}nder the same assumptions in Proposition \ref{pp3}, if $ \alpha,\beta $ satisfy \eqref{x2.18} or $\alpha{\textcolor{red}{=}}\beta{\textcolor{red}{=}}0$, we have the following inequality
\begin{equation}
\begin{aligned}
&e^{\beta t} \int_{-\infty}^{+\infty} e^{\alpha\left|{\textcolor{blue}{\xi}}-{\textcolor{blue}{\xi}}_{*}\right|} {\textcolor{blue}{\Psi }} ^{2}(t, {\textcolor{blue}{\xi}}) d {\textcolor{blue}{\xi}}+\int_{0}^{t} e^{\beta \tau} \int_{-\infty}^{+\infty} e^{\alpha\left|{\textcolor{blue}{\xi}}-{\textcolor{blue}{\xi}}_{*}\right|} {\textcolor{blue}{\Psi }}_{{\textcolor{blue}{\xi}}}^{2}(\tau, {\textcolor{blue}{\xi}}) d {\textcolor{blue}{\xi}} d \tau \\
\quad \leq &C \int_{-\infty}^{+\infty} e^{\alpha\left|{\textcolor{blue}{\xi}}-{\textcolor{blue}{\xi}}_{*}\right|} {\textcolor{blue}{\Psi }}_{0}^{2}  ({\textcolor{blue}{\xi}}) d {\textcolor{blue}{\xi}}+ C {\textcolor{red}{\delta}}.
\end{aligned}
\end{equation}
\end{lma}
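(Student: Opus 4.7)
My plan is to derive the weighted energy estimate by multiplying equation \eqref{x3.2} by $2 e^{\beta t} e^{\alpha|\xi - \xi_*|} \Psi$ and integrating over $\mathbb{R}$. Writing $w = e^{\beta t} e^{\alpha|\xi-\xi_*|}$, the identity $2w\Psi\Psi_t = \partial_t(w\Psi^2) - \beta w\Psi^2$ converts the time derivative into the two terms on the left-hand side of the claimed inequality (after integrating in $t$), at the cost of a $-\beta$ correction that must be absorbed into the dissipation. Because $|\xi-\xi_*|$ is only Lipschitz, I would carry out all the integrations by parts with the smoothed weight $\sqrt{(\xi-\xi_*)^2+\varepsilon^2}$ in place of $|\xi-\xi_*|$ and pass to the limit, which removes any distributional contribution at $\xi_*$ from differentiating $\mathrm{sgn}(\xi-\xi_*)$ against the third-order dispersion.

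The crucial input is the Lagrange identity from Section \ref{section4}: combining $-2s\Psi\Psi_\xi = -(s\Psi^2)_\xi$ with $2\Psi[f(U+\Psi_\xi)-f(U)] = \{f'(U)\Psi^2\}_\xi - f''(U)U_\xi \Psi^2 + f''(\xi_1)\Psi\Psi_\xi^2$ and integrating the exact derivative against the weight produces the sign-definite term $\alpha\int \mathrm{sgn}(\xi-\xi_*)(s-f'(U))w\Psi^2\,d\xi$. Strict convexity of $f$ together with $\phi'<0$ and $f'(\phi(\xi^*))=s$ forces $\mathrm{sgn}(\xi-\xi_*)(s-f'(\phi_{\eta_\infty}))\ge 0$, with the explicit lower bound provided by $C_0$ (via the mean value theorem with $f''\ge G$, $|\phi'|\ge B$ near $\xi^*$; and monotonicity plus the minimum jump $|\phi(\xi^*\pm 1)-\phi(\xi^*)|$ far from $\xi^*$). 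Replacing $U$ by $\phi_{\eta_\infty}$ costs $O(\delta e^{-\theta t})$ by Lemma \ref{yl5}; the term $-f''(U)U_\xi\Psi^2$ is itself non-negative since $U_\xi\approx(\bar u_l-\bar u_r)g'<0$ while $f''>0$; and the cubic $f''(\xi_1)\Psi\Psi_\xi^2$ is absorbed into the $\Psi_\xi^2$ dissipation via the a priori bound $\|\Psi\|_\infty\le C\epsilon$.

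Integrating $-2\gamma w\Psi\Psi_{\xi\xi}$ by parts yields $2\gamma\int w\Psi_\xi^2\,d\xi$ together with the cross term $2\gamma\alpha\int \mathrm{sgn}(\xi-\xi_*)w\Psi\Psi_\xi\,d\xi$, while integrating $-2\mu w\Psi\Psi_{\xi\xi\xi}$ by parts three times contributes $3\mu\alpha\int \mathrm{sgn}(\xi-\xi_*)w\Psi_\xi^2\,d\xi$ plus a cross term $2\mu\alpha^2\int\mathrm{sgn}(\xi-\xi_*)w\Psi\Psi_\xi\,d\xi$ and lower-order $\Psi^2$ pieces. Collecting everything and applying Young's inequality to the combined cross term $2\alpha(\alpha\mu+\gamma)\int\mathrm{sgn}(\xi-\xi_*)w\Psi\Psi_\xi\,d\xi$ reduces the problem to non-negativity of a quadratic form with diagonal entries controlled by $\alpha C_0-\beta$ (on $\Psi^2$) and $2\gamma-3\mu\alpha$ (on $\Psi_\xi^2$) and off-diagonal entry $\alpha(\alpha\mu+\gamma)$; the discriminant inequality demanded by this is precisely \eqref{x2.18}. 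The remainder $2\int w\Psi H\,d\xi$ is handled using Lemma \ref{yl6}: since $\beta<\theta$ and $\alpha<\sigma_0$, the product $e^{(\beta-\theta)t}e^{\alpha|\xi-\xi_*|-\sigma_0|\xi-\eta(t)|}$ is integrable in both variables, giving the $C\delta$ contribution after Cauchy--Schwarz and time integration. The main obstacle is the sign change in $\mathrm{sgn}(\xi-\xi_*)$ entering the third-order contribution: unlike the viscosity, the dispersion fails to provide an unambiguously non-negative weighted dissipation, and the restriction $\alpha<2\gamma/(3\mu)$ is exactly what forces the viscous $2\gamma$ to dominate the worst-case $-3\mu\alpha$ from dispersion while still leaving slack to absorb the cross terms; the unweighted case $\alpha=\beta=0$ falls out of the same argument trivially.
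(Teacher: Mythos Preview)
Your proposal is correct and follows essentially the same route as the paper: multiply \eqref{x3.2} by $2e^{\beta t}e^{\alpha|\xi-\xi_*|}\Psi$, use the Lagrange identity to extract the coercive term $A_\alpha(\xi)\Psi^2$ (with $A_\alpha\ge C_0\alpha$, which the paper cites from \cite{YZZ2009}), replace $U$ by $\phi_{\eta_\infty}$ at $O(\delta e^{-\theta t})$ cost via Lemma~\ref{yl5}, and reduce the remaining balance of viscous, dispersive, and cross terms to positivity of the $2\times 2$ matrix $\mathbf{M}$, whose determinant condition is exactly \eqref{x2.18}. Your treatment of the source $H$ via Lemma~\ref{yl6} and of the cubic $f''(\xi_1)\Psi\Psi_\xi^2$ via $\|\Psi\|_\infty\le C\epsilon$ also matches the paper. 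Two minor remarks: the cross term from dispersion is $2\mu\alpha^2\int w\Psi\Psi_\xi\,d\xi$ without the $\mathrm{sgn}$ factor (since $w_{\xi\xi}=\alpha^2 w$ a.e.), though this does not alter the worst-case determinant; and your claim that $U_\xi<0$ pointwise is not literally true because of the periodic oscillations in $u_{l,\xi},u_{r,\xi}$, but the paper (and you) handle this correctly by comparing with $\phi_{\eta_\infty,\xi}$ through the $a_8$ term, so the argument stands.
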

\begin{proof}
We  multiply equation \eqref{x3.2} by $2 e^{\beta t} e^{\alpha|{\textcolor{blue}{\xi}}-{\textcolor{blue}{\xi}}_{*}|} {\textcolor{blue}{\Psi }}({\textcolor{blue}{\xi}}, t)$,  and integrate result with respect to $t$ and ${\textcolor{blue}{\xi}}$ over $ [0,t]\times \mathbb{R} $,  we have

\begin{equation}
\begin{aligned}
& e^{\beta t} \int_{-\infty}^{+\infty} e^{\alpha\left|{\textcolor{blue}{\xi}}-{\textcolor{blue}{\xi}}_{*}\right|} {\textcolor{blue}{\Psi }}^{2}(t, {\textcolor{blue}{\xi}}) d {\textcolor{blue}{\xi}}+2 \textcolor{blue}{\gamma} \int_{0}^{t} e^{\beta \tau} \int_{-\infty}^{+\infty} e^{\alpha\left|{\textcolor{blue}{\xi}}-{\textcolor{blue}{\xi}}_{*}\right|} {\textcolor{blue}{\Psi }}_{{\textcolor{blue}{\xi}}}^{2}(\tau, {\textcolor{blue}{\xi}}) d {\textcolor{blue}{\xi}} d \tau \\
 &  + \int_{0}^{t} e^{\beta \tau} \int_{-\infty}^{+\infty} e^{\alpha\left|{\textcolor{blue}{\xi}}-{\textcolor{blue}{\xi}}_{*}\right|}   A_{\alpha}({\textcolor{blue}{\xi}})         {\textcolor{blue}{\Psi }} ^{2}(\tau, {\textcolor{blue}{\xi}}) d {\textcolor{blue}{\xi}} d \tau\\
 :{\textcolor{red}{=}} & \int_{-\infty}^{+\infty} e^{\alpha\left|{\textcolor{blue}{\xi}}-{\textcolor{blue}{\xi}}_{*}\right|} {\textcolor{blue}{\Psi }}_{0}^{2}({\textcolor{blue}{\xi}}) d{\textcolor{blue}{\xi}}+\sum_{i{\textcolor{red}{=}}1}^8    \int_{0}^{t} e^{\beta \tau} \int_{-\infty}^{+\infty}  e^{\alpha\left|{\textcolor{blue}{\xi}}-{\textcolor{blue}{\xi}}_{*}\right|}  a_i d {\textcolor{blue}{\xi}} d \tau,
 \end{aligned}
\label{s4.2}\end{equation}
where

$$A_{\alpha}({\textcolor{blue}{\xi}}) {\textcolor{red}{=}} -\alpha {sign}\left[\left({\textcolor{blue}{\xi}}-{ {\textcolor{blue}{\eta }}_{\infty}}\right)-\left({\textcolor{blue}{\xi}}_{*}-{ {\textcolor{blue}{\eta }}_{\infty}}\right)\right]\left(f'(\textcolor{blue}{\phi}_{ {\textcolor{blue}{\eta }}_{\infty}})-s\right)+f''(\textcolor{blue}{\phi}_{ {\textcolor{blue}{\eta }}_{\infty}})\left|\textcolor{blue}{\phi}_{ {\textcolor{blue}{\eta }}_{\infty},{\textcolor{blue}{\xi}}} \right|,$$

\begin{equation*}
\begin{aligned}
a_1 {\textcolor{red}{=}}& \beta        {\textcolor{blue}{\Psi }}^{2}     ,& a_2 {\textcolor{red}{=}}&-3 \alpha \textcolor{blue}{\mu}       {sign}\left({\textcolor{blue}{\xi}}-{\textcolor{blue}{\xi}}_{*}\right) {\textcolor{blue}{\Psi }}_{{\textcolor{blue}{\xi}}}^{2}    ,\\
a_3 {\textcolor{red}{=}}&-  2 \alpha^{2} \textcolor{blue}{\mu}      {\textcolor{blue}{\Psi }} {\textcolor{blue}{\Psi }}_{{\textcolor{blue}{\xi}}}    , &  a_4 {\textcolor{red}{=}}&- {2 \alpha \textcolor{blue}{\gamma}      {sign}\left({\textcolor{blue}{\xi}}-{\textcolor{blue}{\xi}}_{*}\right) {\textcolor{blue}{\Psi }} {\textcolor{blue}{\Psi }}_{{\textcolor{blue}{\xi}}}   }, \\
a_5 {\textcolor{red}{=}}&-{        f''\left({\textcolor{blue}{\xi}}_{1}\right) {\textcolor{blue}{\Psi }} {\textcolor{blue}{\Psi }}_{{\textcolor{blue}{\xi}}}^{2}     }  ,& a_6 {\textcolor{red}{=}}&{      2H {\textcolor{blue}{\Psi }}    }  , \\
a_7 {\textcolor{red}{=}}&           \left(-\alpha {sign}\left({\textcolor{blue}{\xi}}-{\textcolor{blue}{\xi}}_{*}\right)\left(f'(\textcolor{blue}{\phi}_{ {\textcolor{blue}{\eta }}_{\infty}})-f'(\textcolor{blue}{U})\right)\right)          {\textcolor{blue}{\Psi }} ^{2} ,& a_8 {\textcolor{red}{=}}&         \left(f''(\textcolor{blue}{\phi}_{ {\textcolor{blue}{\eta }}_{\infty}})\left|\textcolor{blue}{\phi}_{ {\textcolor{blue}{\eta }}_{\infty},{\textcolor{blue}{\xi}}} \right|-f''(\textcolor{blue}{U})\left|\textcolor{blue}{U}_{{\textcolor{blue}{\xi}}}\right|\right)          {\textcolor{blue}{\Psi }} ^{2} .\\
\end{aligned}
\end{equation*}
One gets $A_{\alpha}({\textcolor{blue}{\xi}})  \geq   C_{0} \alpha$   (\cite[Lemma 3.1]{YZZ2009}). We   rewrite $    [ 2 \textcolor{blue}{\gamma} {\textcolor{blue}{\Psi }}_{{\textcolor{blue}{\xi}}}^{2} +  C_{0} \alpha {\textcolor{blue}{\Psi }} ^{2}]    -   \Sigma _{j{\textcolor{red}{=}}1}^{4}a_j$  as $({\textcolor{blue}{\Psi }} {\textcolor{blue}{\Psi }}_{{\textcolor{blue}{\xi}}}) \mathbf{M} ({\textcolor{blue}{\Psi }} {\textcolor{blue}{\Psi }}_{{\textcolor{blue}{\xi}}})^{T}$

where the matrix
\begin{align*}
\mathbf{M}:{\textcolor{red}{=}}
\left(
\begin{array}{cccc}
 C_{0} \alpha -\beta    &\,\,    \textcolor{blue}{\mu} \alpha^2+\textcolor{blue}{\gamma} \alpha {sign}\left({\textcolor{blue}{\xi}}-{\textcolor{blue}{\xi}}_{*}\right) \\
\textcolor{blue}{\mu} \alpha^2+\textcolor{blue}{\gamma} \alpha  {sign}\left({\textcolor{blue}{\xi}}-{\textcolor{blue}{\xi}}_{*}\right) &\,\,2 \textcolor{blue}{\gamma}- 3\alpha \textcolor{blue}{\mu} {sign}\left({\textcolor{blue}{\xi}}-{\textcolor{blue}{\xi}}_{*}\right) \\
\end{array}
\right),
\end{align*}

A directly calculation gives if \eqref{x2.18} holds, the matric $\mathbf{M}$ is  positive.  Thus we can find a positive constant $\sigma_1$ such that

\begin{equation}\label{s4.3}
 \left(
\begin{array}{cccc}
 {{\textcolor{blue}{\Psi }}} \, {{\textcolor{blue}{\Psi }}}_{{\textcolor{blue}{\xi}}} \\
\end{array}
\right )
\mathbf{M}
\left(
\begin{array}{cccc}
 {{\textcolor{blue}{\Psi }}} \\
 {{\textcolor{blue}{\Psi }}}_{{\textcolor{blue}{\xi}}}\\
\end{array}
\right )\left\{\begin{array}{l}
>\sigma_1[{{\textcolor{blue}{\Psi }}}^{2} +{{\textcolor{blue}{\Psi }}}_{{\textcolor{blue}{\xi}}}^{2}]     ,  \ \quad \quad \ \alpha>0 ,\\
{\textcolor{red}{=}} 2 \textcolor{blue}{\gamma} {{\textcolor{blue}{\Psi }}}_{{\textcolor{blue}{\xi}}}^{2}     , \quad \quad \quad \quad \quad  \alpha{\textcolor{red}{=}}0.
\end{array} \quad\right.
\end{equation}

Now we estimate the last four terms $A_i$  on the right-hand side of \eqref{s4.2}, where $ A_{i}{\textcolor{red}{=}}\int_{0}^{t} e^{\beta \tau} \int_{-\infty}^{+\infty} a_i d {\textcolor{blue}{\xi}} d \tau$.  With the aid of Sobolev inequality, one gets
\begin{align}
\begin{split}
   A_5  & \leq C\sup_{\tau\in[0,t]}\|{\textcolor{blue}{\Psi }}(\tau)\|_{L^{\infty}}   \int_{0}^{t} e^{\beta \tau} \int_{-\infty}^{+\infty} e^{\alpha\left|{\textcolor{blue}{\xi}}-{\textcolor{blue}{\xi}}_{*}\right|} {\textcolor{blue}{\Psi }}_{{\textcolor{blue}{\xi}}}^{2}(\tau, {\textcolor{blue}{\xi}}) d {\textcolor{blue}{\xi}} d \tau \\
& \leq C \epsilon \int_{0}^{t} e^{\beta \tau} \int_{-\infty}^{+\infty} e^{\alpha\left|{\textcolor{blue}{\xi}}-{\textcolor{blue}{\xi}}_{*}\right|} {\textcolor{blue}{\Psi }}_{{\textcolor{blue}{\xi}}}^{2}(\tau, {\textcolor{blue}{\xi}}) d {\textcolor{blue}{\xi}} d \tau.
\end{split}&
\end{align}
By \eqref{x2.14}, one gets that $ {\textcolor{blue}{\eta }}$ is bounded. Thus, we can find a sufficient big positive constant $N$, such that $N\geq \max\{{\textcolor{blue}{\xi}}_* ,\pm {\textcolor{blue}{\eta }} (t)\}$, if $\beta<\theta, \alpha<\sigma_{0}$, we have
\begin{align}\label{s4.6}
\begin{split}
   A_6 \leq& C\sup_{\tau\in[0,t]} \|{\textcolor{blue}{\Psi }}(\tau)\|_{L^{\infty}}   \int_{0}^{t} e^{\beta \tau} \int_{-\infty}^{+\infty} e^{\alpha\left|{\textcolor{blue}{\xi}}-{\textcolor{blue}{\xi}}_{*}\right|} |H|  d {\textcolor{blue}{\xi}} d \tau \\
\leq& C \epsilon {\textcolor{red}{\delta}}\int_{0}^{t} e^{\beta \tau} e^{-\theta \tau} \int_{-\infty}^{+\infty} e^{\alpha\left|{\textcolor{blue}{\xi}}-{\textcolor{blue}{\xi}}_{*}\right|} e^{- \sigma_0|{\textcolor{blue}{\xi}}- {\textcolor{blue}{\eta }}(t)|}  (\tau, {\textcolor{blue}{\xi}}) d {\textcolor{blue}{\xi}} d \tau\\
{\textcolor{red}{=}} & C \epsilon {\textcolor{red}{\delta}} \int_{0}^{t}e^{(\beta-\theta) \tau}\left(  \int_{-\infty} ^{-N }+\int_{-N} ^{ N}+\int_{N} ^{ \infty}\right)e^{\alpha\left|{\textcolor{blue}{\xi}}-{\textcolor{blue}{\xi}}_{*}\right|}   e^{- \sigma_0 |{\textcolor{blue}{\xi}}- {\textcolor{blue}{\eta }}(t)|}d {\textcolor{blue}{\xi}} d\tau  \\
\leq & C\epsilon \cdot e^{(\sigma_{0}+\alpha)  N }   \cdot {\textcolor{red}{\delta}} \int_{0}^{t}e^{(\beta-\sigma_{0}) \tau}  \int_{-\infty}^{-N}     e^{(\sigma_{0}-\alpha)  {\textcolor{blue}{\xi}}}d {\textcolor{blue}{\xi}} d\tau \\
&+ C \epsilon {\textcolor{red}{\delta}} \int_{0}^{t}e^{(\beta-\theta) \tau}  \int_{-N} ^{ N} e^{\alpha\left|{\textcolor{blue}{\xi}}-{\textcolor{blue}{\xi}}_{*}\right|}   e^{-\sigma_0 |{\textcolor{blue}{\xi}}- {\textcolor{blue}{\eta }}(t)|} d{\textcolor{blue}{\xi}} d\tau\\
&+C\epsilon \cdot e^{({\sigma_0}+\alpha)  N }   \cdot {\textcolor{red}{\delta}} \int_{0}^{t}e^{(\beta-\theta) \tau}\int_{N} ^{\infty}e^{(\alpha-{\sigma_0}){\textcolor{blue}{\xi}}}d {\textcolor{blue}{\xi}} d\tau \leq C{\textcolor{red}{\delta}}.
\end{split}&
\end{align}
With the aid of Lemma \ref{yl5}, if $\alpha\neq 0,$ we have
\begin{align}\label{s4.7}
\begin{split}
  A_7 &\leq \alpha \int_{0}^{t} e^{\beta \tau} \int_{-\infty}^{+\infty} e^{\alpha\left|{\textcolor{blue}{\xi}}-{\textcolor{blue}{\xi}}_{*}\right|}    \left|f'(\textcolor{blue}{\phi}_{ {\textcolor{blue}{\eta }}_{\infty}})-f'(\textcolor{blue}{U})\right|{\textcolor{blue}{\Psi }}^{2}(\tau,{\textcolor{blue}{\xi}}) d {\textcolor{blue}{\xi}} d \tau\\
 & \leq C \int_{0}^{t} e^{\beta\tau} \int_{-\infty}^{+\infty} e^{\alpha\left|{\textcolor{blue}{\xi}}-{\textcolor{blue}{\xi}}_{*}\right|}\left| \textcolor{blue}{\phi}_{ {\textcolor{blue}{\eta }}_{\infty}}- \textcolor{blue}{U}\right|{\textcolor{blue}{\Psi }}^{2}(\tau, {\textcolor{blue}{\xi}})d{\textcolor{blue}{\xi}} d\tau\\
 & \leq C {\textcolor{red}{\delta}} \int_{0}^{t} e^{\beta\tau} \int_{-\infty}^{+\infty} e^{\alpha\left|{\textcolor{blue}{\xi}}-{\textcolor{blue}{\xi}}_{*}\right|} e^{-\theta \tau}
     {\textcolor{blue}{\Psi }}^{2}(\tau,{\textcolor{blue}{\xi}}) d{\textcolor{blue}{\xi}} d\tau\\
 & \leq C {\textcolor{red}{\delta}} \int_{0}^{t} e^{\beta\tau} \int_{-\infty}^{+\infty} e^{\alpha\left|{\textcolor{blue}{\xi}}-{\textcolor{blue}{\xi}}_{*}\right|}{\textcolor{blue}{\Psi }}^{2}(\tau,{\textcolor{blue}{\xi}}) d{\textcolor{blue}{\xi}} d\tau,
 \end{split}&
\end{align}
and
\begin{align}\label{s4.8}
\begin{split}
A_8{\textcolor{red}{=}}&\int_{0}^{t} e^{\beta\tau}\int_{-\infty}^{+\infty} e^{\alpha\left|{\textcolor{blue}{\xi}}-{\textcolor{blue}{\xi}}_{*}\right|}\left(f''(\textcolor{blue}{\phi}_{ {\textcolor{blue}{\eta }}_{\infty}})\left|\textcolor{blue}{\phi}_{ {\textcolor{blue}{\eta }}_{\infty},{\textcolor{blue}{\xi}}} \right|-f''(\textcolor{blue}{\phi}_{ {\textcolor{blue}{\eta }}_{\infty}})\left|\textcolor{blue}{U}_{{\textcolor{blue}{\xi}}}\right|\right){\textcolor{blue}{\Psi }}^{2}(\tau,{\textcolor{blue}{\xi}}) d{\textcolor{blue}{\xi}} d\tau\\
&+\int_{0}^{t}e^{\beta\tau}\int_{-\infty}^{+\infty} e^{\alpha\left|{\textcolor{blue}{\xi}}-{\textcolor{blue}{\xi}}_{*}\right|}\left(f''(\textcolor{blue}{\phi}_{ {\textcolor{blue}{\eta }}_{\infty}})\left|\textcolor{blue}{U}_{ {\textcolor{blue}{\xi}}}\right|-f''(\textcolor{blue}{U})\left|\textcolor{blue}{U}_{{\textcolor{blue}{\xi}}}\right|\right){\textcolor{blue}{\Psi }}^{2}(\tau,{\textcolor{blue}{\xi}}) d{\textcolor{blue}{\xi}} d\tau\\
\leq &C \int_{0}^{t} e^{\beta\tau} \int_{-\infty}^{+\infty} e^{\alpha\left|{\textcolor{blue}{\xi}}-{\textcolor{blue}{\xi}}_{*}\right|}\left|\textcolor{blue}{\phi}_{ {\textcolor{blue}{\eta }}_{\infty},{\textcolor{blue}{\xi}}}-\textcolor{blue}{U}_{{\textcolor{blue}{\xi}}}\right| {\textcolor{blue}{\Psi }}^{2}(\tau,{\textcolor{blue}{\xi}}) d{\textcolor{blue}{\xi}} d\tau\\
&+C\int_{0}^{t}e^{\beta\tau}\int_{-\infty}^{+\infty}e^{\alpha\left|{\textcolor{blue}{\xi}}-{\textcolor{blue}{\xi}}_{*}\right|}  |\textcolor{blue}{U}_{{\textcolor{blue}{\xi}}} ||\textcolor{blue}{\phi}_{ {\textcolor{blue}{\eta }}_{\infty}}-\textcolor{blue}{U}|{\textcolor{blue}{\Psi }}^{2}(\tau,{\textcolor{blue}{\xi}}) d{\textcolor{blue}{\xi}} d\tau\\
\leq&C{\textcolor{red}{\delta}}\int_{0}^{t} e^{\beta\tau} \int_{-\infty}^{+\infty} e^{\alpha\left|{\textcolor{blue}{\xi}}-{\textcolor{blue}{\xi}}_{*}\right|}{\textcolor{blue}{\Psi }}^{2}(\tau,{\textcolor{blue}{\xi}}) d{\textcolor{blue}{\xi}} d\tau.
\end{split}&
\end{align}
On the other hand, for $\alpha{\textcolor{red}{=}}\beta{\textcolor{red}{=}}0$, we have
\begin{align}\label{s4.9}
\begin{split}
A_7{\textcolor{red}{=}}&0,\\
A_8\leq&C\int_{0}^{t}\max\{\left\|\textcolor{blue}{\phi}_{ {\textcolor{blue}{\eta }}_{\infty}}-\textcolor{blue}{U} \right\|_{L^{\infty}},\left\|\textcolor{blue}{\phi}_{ {\textcolor{blue}{\eta }}_{\infty},{\textcolor{blue}{\xi}}}-\textcolor{blue}{U}_{{\textcolor{blue}{\xi}}}\right\|_{L^{\infty}} \}\|{\textcolor{blue}{\Psi }} \|^{2}(\tau,{\textcolor{blue}{\xi}})d\tau\\
\leq &C\sup_{\tau\in[0,t]} \|{\textcolor{blue}{\Psi }}\|^{2}\int_{0}^{t}{\textcolor{red}{\delta}} e^{-\theta \tau}d\tau \leq C  {\textcolor{red}{\delta}},
\end{split}&
\end{align}
where we have used Lemma \ref{yl5}. Substituting \eqref{s4.3}-\eqref{s4.8} into $\eqref{s4.2}$, for $\alpha\neq0$, one has
\begin{align}\label{s4.10}
\begin{split}
&e^{\beta t} \int_{-\infty}^{+\infty} e^{\alpha\left|{\textcolor{blue}{\xi}}-{\textcolor{blue}{\xi}}_{*}\right|} {\textcolor{blue}{\Psi }}^{2}(t,{\textcolor{blue}{\xi}}) d {\textcolor{blue}{\xi}}+\left(\sigma_1 -C{\textcolor{red}{\delta}} \right)\int_{0}^{t} e^{\beta \tau} \int_{-\infty}^{+\infty} e^{\alpha |{\textcolor{blue}{\xi}}-{\textcolor{blue}{\xi}}_{*}|} {\textcolor{blue}{\Psi }}^{2}(\tau,{\textcolor{blue}{\xi}}) d{\textcolor{blue}{\xi}} d\tau \\
 &+\left(\sigma_1-C\epsilon \right) \int_{0}^{t} e^{\beta \tau} \int_{-\infty}^{+\infty} e^{\alpha\left| {\textcolor{blue}{\xi}}-{\textcolor{blue}{\xi}}_{*} \right|} {\textcolor{blue}{\Psi }}_{{\textcolor{blue}{\xi}}}^{2}(\tau,{\textcolor{blue}{\xi}}) d{\textcolor{blue}{\xi}} d\tau \\
\leq& \int_{-\infty}^{+\infty} e^{a\left|{\textcolor{blue}{\xi}}-{\textcolor{blue}{\xi}}_{*}\right|} {\textcolor{blue}{\Psi }}_{0}^{2}({\textcolor{blue}{\xi}}) d {\textcolor{blue}{\xi}}+   C   {\textcolor{red}{\delta}}.
\end{split}
\end{align}
Substituting $\eqref{s4.3}$-$\eqref{s4.7}$ and \eqref{s4.9} into $\eqref{s4.2}$, for $\alpha{\textcolor{red}{=}}\beta{\textcolor{red}{=}}0$, one has
\begin{align}\label{s4.11}
\begin{split}
& \int_{-\infty}^{+\infty}{\textcolor{blue}{\Psi }}^{2}(t,{\textcolor{blue}{\xi}}) d{\textcolor{blue}{\xi}}
 +\left(2\textcolor{blue}{\gamma}-C\epsilon \right)\int_{0}^{t}\int_{-\infty}^{+\infty} {\textcolor{blue}{\Psi }}_{{\textcolor{blue}{\xi}}}^{2}(\tau,{\textcolor{blue}{\xi}}) d{\textcolor{blue}{\xi}} d\tau\\
\leq& \int_{-\infty}^{+\infty}{\textcolor{blue}{\Psi }}_{0}^{2}({\textcolor{blue}{\xi}}) d{\textcolor{blue}{\xi}}+ C{\textcolor{red}{\delta}}.
\end{split}
\end{align}
Combining \eqref{s4.10}, \eqref{s4.11}, if $\alpha,\beta$ satisfy \eqref{x2.18}, we have complete the proof of Lemma \ref{yl9}.
\end{proof}
\begin{lma}\label{yl10}
\textcolor{blue}{U}nder the same assumptions in Lemma \ref{yl9}, we have the following inequality
\begin{equation}\label{s4.12}
\begin{aligned}
&e^{\beta t} \int_{-\infty}^{+\infty} e^{\alpha\left|{\textcolor{blue}{\xi}}-{\textcolor{blue}{\xi}}_{*}\right|} {\textcolor{blue}{\Psi }}_{{\textcolor{blue}{\xi}} }^{2}(t,{\textcolor{blue}{\xi}}) d{\textcolor{blue}{\xi}}+\int_{0}^{t} e^{\beta\tau} \int_{-\infty}^{+\infty} e^{\alpha\left|{\textcolor{blue}{\xi}}-{\textcolor{blue}{\xi}}_{*}\right|} {\textcolor{blue}{\Psi }}_{{\textcolor{blue}{\xi}}{\textcolor{blue}{\xi}}}^{2}(\tau,{\textcolor{blue}{\xi}})d{\textcolor{blue}{\xi}} d\tau \\
\quad \leq &C \int_{-\infty}^{+\infty} e^{\alpha\left|{\textcolor{blue}{\xi}}-{\textcolor{blue}{\xi}}_{*}\right|}\left({\textcolor{blue}{\Psi }}_{0}^{2} +{\textcolor{blue}{\Psi }}_{0,{\textcolor{blue}{\xi}}}^{2} \right)({\textcolor{blue}{\xi}}) d{\textcolor{blue}{\xi}}+ C {\textcolor{red}{\delta}}.
\end{aligned}
\end{equation}
\end{lma}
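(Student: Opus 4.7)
The plan is to mimic the proof of Lemma \ref{yl9} one order of differentiation higher. First I would apply $\partial_\xi$ to equation \eqref{x3.2}, obtaining
\[
\Psi_{t\xi}-s\Psi_{\xi\xi}+[f(u)-f(U)]_\xi+\mu\Psi_{\xi\xi\xi\xi}-\gamma\Psi_{\xi\xi\xi}=H_\xi,
\]
multiply the identity by $2e^{\beta t}e^{\alpha|\xi-\xi_*|}\Psi_\xi$, and integrate over $[0,t]\times\mathbb{R}$. The time term is exact and yields $\frac{d}{dt}\bigl(e^{\beta t}\!\int e^{\alpha|\xi-\xi_*|}\Psi_\xi^2\,d\xi\bigr)$ minus a $\beta$-correction to be absorbed later. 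The transport term together with the nonlinearity, expanded through the Lagrange identity $2\Psi_\xi[f(U+\Psi_\xi)-f(U)]_\xi=\{2f'(\xi_2)\Psi_\xi^2\}_\xi-2f'(\xi_2)\Psi_\xi\Psi_{\xi\xi}$ already listed before Lemma \ref{yl9}, produces after one integration by parts exactly the same $A_\alpha(\xi)\geq C_0\alpha$ coefficient on $\Psi_\xi^2$ as in Lemma \ref{yl9}.

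Next I would process the higher-order terms. Using $\Psi_\xi\Psi_{\xi\xi\xi}=(\Psi_\xi\Psi_{\xi\xi})_\xi-\Psi_{\xi\xi}^2$ on the viscous part extracts the main dissipation $2\gamma e^{\alpha|\xi-\xi_*|}\Psi_{\xi\xi}^2$ along with a cross term $2\gamma\alpha\,\operatorname{sign}(\xi-\xi_*)\Psi_\xi\Psi_{\xi\xi}$. For the dispersion I would use $2\Psi_\xi\Psi_{\xi\xi\xi\xi}=\partial_\xi(2\Psi_\xi\Psi_{\xi\xi\xi}-\Psi_{\xi\xi}^2)$ and then the same identity again on the remaining $\Psi_\xi\Psi_{\xi\xi\xi}$ factor; this contributes a $-3\mu\alpha\,\operatorname{sign}(\xi-\xi_*)\Psi_{\xi\xi}^2$ into the diagonal slot and a $\mu\alpha^2\Psi_\xi\Psi_{\xi\xi}$ into the off-diagonal slot. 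Collecting, the bilinear contribution becomes $(\Psi_\xi,\Psi_{\xi\xi})\,\mathbf{M}\,(\Psi_\xi,\Psi_{\xi\xi})^{T}$ with the \emph{same} matrix $\mathbf{M}$ that appeared in the proof of Lemma \ref{yl9}; hence condition \eqref{x2.18} supplies a pointwise lower bound $\sigma_{1}(\Psi_\xi^{2}+\Psi_{\xi\xi}^{2})$.

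The remaining source and perturbation terms are bounded in the style of \eqref{s4.6}--\eqref{s4.9}. Cubic remainders from the Lagrange expansion are absorbed by $C\epsilon$ times the dissipation via $\|\Psi_\xi\|_{L^\infty}\le C\|\Psi\|_{3}\le C\epsilon$. The source $\int 2e^{\alpha|\xi-\xi_*|}\Psi_\xi H_\xi\,d\xi$ is controlled by Lemma \ref{yl6} after the three-piece decomposition $(-\infty,-N)\cup[-N,N]\cup(N,\infty)$ used in \eqref{s4.6}, with the constraint $\alpha<\sigma_0$ guaranteeing integrability at infinity; this yields the $C\delta$ contribution. The ansatz-mismatch pieces arising from replacing $\phi_{\eta_\infty}$ by $U$ inside $A_\alpha$ decay like $\delta e^{-\theta t}$ by Lemma \ref{yl5} and are likewise absorbed into $C\delta$. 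Finally, the weighted $\Psi_\xi^2$ integral that appears on the right (from the $-\beta$ correction and from several of the bounds above) is controlled by invoking Lemma \ref{yl9}, which already majorizes $\int_0^t e^{\beta\tau}\!\int e^{\alpha|\xi-\xi_*|}\Psi_\xi^2\,d\xi\,d\tau$ by the right-hand side of \eqref{s4.12}.

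The main obstacle I anticipate is the algebraic bookkeeping of the dispersion. Each integration by parts of $\mu\Psi_\xi\Psi_{\xi\xi\xi\xi}$ spawns coefficients of the form $\alpha^{j}\operatorname{sign}(\xi-\xi_*)^{k}e^{\alpha|\xi-\xi_*|}$, and one must never allow a $\xi$-derivative to land on $\operatorname{sign}(\xi-\xi_*)$, for that would inject a Dirac mass at $\xi_*$ and leave uncontrolled pointwise values of $\Psi_\xi$ and $\Psi_{\xi\xi}$ at $\xi_*$. Ordering the successive integrations by parts so that every surviving sign factor multiplies a continuous $\Psi$-quadratic is what makes the dispersion and viscosity combine cleanly into the off-diagonal $\mu\alpha^{2}+\gamma\alpha\,\operatorname{sign}(\xi-\xi_*)$ and diagonal $2\gamma-3\mu\alpha\,\operatorname{sign}(\xi-\xi_*)$ entries of $\mathbf{M}$; this is precisely the algebra for which the determinant inequality \eqref{x2.18} was calibrated.
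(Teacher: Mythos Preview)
Your plan contains a genuine gap. The claim that the transport and nonlinear terms at first order reproduce ``exactly the same $A_\alpha(\xi)\ge C_0\alpha$ coefficient on $\Psi_\xi^2$'' is false. In Lemma~\ref{yl9} that coefficient comes from the identity $2\Psi f'(U)\Psi_\xi=\{f'(U)\Psi^2\}_\xi-f''(U)U_\xi\Psi^2$, and it is the term $-f''(U)U_\xi\approx f''(\phi_{\eta_\infty})|\phi_{\eta_\infty,\xi}|$ that supplies the crucial positive piece of $A_\alpha$. The first-order Lagrange identity you quote, $2\Psi_\xi[f'(\xi_2)\Psi_\xi]_\xi=\{2f'(\xi_2)\Psi_\xi^2\}_\xi-2f'(\xi_2)\Psi_\xi\Psi_{\xi\xi}$, contains no such term: after one integration by parts against the weight you obtain only $-2\alpha\,\mathrm{sign}(\xi-\xi_*)f'(\xi_2)\Psi_\xi^2$ plus a cross term, so together with the transport contribution the coefficient of $\Psi_\xi^2$ is $\alpha\,\mathrm{sign}(\xi-\xi_*)(s-2f'(\xi_2))$, which has no definite sign. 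Consequently the $(1,1)$ entry of your would-be matrix is not $C_0\alpha-\beta$, the matrix is not $\mathbf{M}$, and the pointwise lower bound $\sigma_1(\Psi_\xi^2+\Psi_{\xi\xi}^2)$ is unavailable.

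The paper does not attempt to rebuild the matrix structure at this level. It simply keeps $2\gamma\int e^{\alpha|\xi-\xi_*|}\Psi_{\xi\xi}^2$ on the left, sends the dispersive piece $3\alpha\mu\,\Psi_{\xi\xi}^2$ to the right (harmless since $\alpha<2\gamma/(3\mu)$), handles every cross term $\Psi_\xi\Psi_{\xi\xi}$ by Cauchy with a small factor on $\Psi_{\xi\xi}^2$, and then controls \emph{all} weighted $\Psi_\xi^2$ integrals, regardless of their coefficients, by a direct appeal to Lemma~\ref{yl9}. Your last paragraph already gestures at this use of Lemma~\ref{yl9}; once you drop the matrix claim and rely on it for the entire $\Psi_\xi^2$ budget, the argument closes, and the delicate sign bookkeeping you worry about in your final paragraph becomes unnecessary.
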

\begin{proof}
Differentiating \eqref{x3.2} with respect to ${\textcolor{blue}{\xi}}$ once, multiplying the result by $2 e^{\beta t} e^{\alpha|{\textcolor{blue}{\xi}}-{\textcolor{blue}{\xi}}_{*}|} {\textcolor{blue}{\Psi }}_{{\textcolor{blue}{\xi}}}({\textcolor{blue}{\xi}}, t)$, integrating the resulting equation with respect to $t, {\textcolor{blue}{\xi}} $ over $[0, t]  \times \mathbb{R}$, one has
\begin{equation}\label{s4.13}
\begin{aligned}
&e^{\beta t}\int_{-\infty}^{+\infty} e^{\alpha\left|{\textcolor{blue}{\xi}}-{\textcolor{blue}{\xi}}_{*}\right|} {\textcolor{blue}{\Psi }}_{{\textcolor{blue}{\xi}}}^{2}(t,{\textcolor{blue}{\xi}}) d{\textcolor{blue}{\xi}}+2 \textcolor{blue}{\gamma} \int_{0}^{t} e^{\beta \tau} \int_{-\infty}^{+\infty} e^{\alpha\left|{\textcolor{blue}{\xi}}-{\textcolor{blue}{\xi}}_{*}\right|} {\textcolor{blue}{\Psi }}_{{\textcolor{blue}{\xi}}{\textcolor{blue}{\xi}}}^{2}(\tau,{\textcolor{blue}{\xi}}) d{\textcolor{blue}{\xi}} d\tau \\
\leq& \int_{-\infty}^{+\infty} e^{\alpha\left|{\textcolor{blue}{\xi}}-{\textcolor{blue}{\xi}}_{*}\right|} {\textcolor{blue}{\Psi }}_{0{\textcolor{blue}{\xi}}}^{2}({\textcolor{blue}{\xi}}) d{\textcolor{blue}{\xi}}+\sum_{i{\textcolor{red}{=}}1}^4 \int_{0}^{t} e^{\beta \tau} \int_{-\infty}^{+\infty}  e^{\alpha\left|{\textcolor{blue}{\xi}}-{\textcolor{blue}{\xi}}_{*}\right|} b_i d{\textcolor{blue}{\xi}} d\tau    ,
\end{aligned}
\end{equation}
where
\begin{equation}
\begin{aligned}
b_1{\textcolor{red}{=}}&(\beta+\alpha\left|2 f'\left({\textcolor{blue}{\xi}}_{2}\right)-s\right|){\textcolor{blue}{\Psi }}_{{\textcolor{blue}{\xi}}}^{2} ,&b_2{\textcolor{red}{=}}&3 \alpha\textcolor{blue}{\mu} {\textcolor{blue}{\Psi }}_{{\textcolor{blue}{\xi}} {\textcolor{blue}{\xi}}}^{2},\\
b_3{\textcolor{red}{=}}&2(\alpha^{2}\textcolor{blue}{\mu}+\alpha\textcolor{blue}{\gamma}+\left|f'\left({\textcolor{blue}{\xi}}_{2}\right)\right|)   \left|{\textcolor{blue}{\Psi }}_{{\textcolor{blue}{\xi}}} {\textcolor{blue}{\Psi }}_{{\textcolor{blue}{\xi}}{\textcolor{blue}{\xi}}}\right|,&b_4{\textcolor{red}{=}}&2\left|{\textcolor{blue}{\Psi }}_{{\textcolor{blue}{\xi}}{\textcolor{blue}{\xi}}} H \right|.
\end{aligned}
\end{equation}
Now we estimate the last two terms $B_{i}(i{\textcolor{red}{=}}3,4)$ on the right-hand side of $\eqref{s4.13}$, where $B_{i}{\textcolor{red}{=}}\int_{0}^{t} e^{\beta\tau}\int_{-\infty}^{+\infty}  e^{\alpha\left|{\textcolor{blue}{\xi}}-{\textcolor{blue}{\xi}}_{*}\right|} b_i d{\textcolor{blue}{\xi}} d\tau$. With the aid of Cauchy inequality, one gets
\begin{equation}\label{s4.14}
\begin{aligned}
B_3\leq&\varepsilon_{1}\int_{0}^{t}e^{\beta\tau}\int_{-\infty}^{+\infty} e^{\alpha\left|{\textcolor{blue}{\xi}}-{\textcolor{blue}{\xi}}_{*}\right|}{\textcolor{blue}{\Psi }}_{{\textcolor{blue}{\xi}}{\textcolor{blue}{\xi}}}^{2}(\tau,{\textcolor{blue}{\xi}}) d{\textcolor{blue}{\xi}} d\tau\\
&+C_{\varepsilon_{1}} \int_{0}^{t} e^{\beta\tau}\int_{-\infty}^{+\infty} e^{\alpha\left|{\textcolor{blue}{\xi}}-{\textcolor{blue}{\xi}}_{*}\right|} {\textcolor{blue}{\Psi }}_{{\textcolor{blue}{\xi}}}^{2}(\tau,{\textcolor{blue}{\xi}}) d{\textcolor{blue}{\xi}} d\tau.
\end{aligned}
\end{equation}
\textcolor{blue}{U}sing Lemma \ref{yl6}, similar to \eqref{s4.6}, we have
\begin{equation}\label{s4.16}
\begin{aligned}
B_4 \leq&\varepsilon_{1}\int_{0}^{t} e^{\beta\tau} \int_{-\infty}^{+\infty} e^{\alpha\left|{\textcolor{blue}{\xi}}-{\textcolor{blue}{\xi}}_{*}\right|}{\textcolor{blue}{\Psi }}_{{\textcolor{blue}{\xi}}{\textcolor{blue}{\xi}}}^{2}(\tau,{\textcolor{blue}{\xi}}) d{\textcolor{blue}{\xi}} d\tau\\
&+C_{\varepsilon_{1}} \int_{0}^{t} e^{\beta\tau} \int_{-\infty}^{+\infty} e^{\alpha\left|{\textcolor{blue}{\xi}}-{\textcolor{blue}{\xi}}_{*}\right|} H^{2}(\tau,{\textcolor{blue}{\xi}}) d{\textcolor{blue}{\xi}} d\tau\\
\leq&\varepsilon_{1} \int_{0}^{t} e^{\beta\tau}\int_{-\infty}^{+\infty} e^{\alpha\left|{\textcolor{blue}{\xi}}-{\textcolor{blue}{\xi}}_{*}\right|}{\textcolor{blue}{\Psi }}_{{\textcolor{blue}{\xi}}{\textcolor{blue}{\xi}}}^{2}(\tau,{\textcolor{blue}{\xi}}) d{\textcolor{blue}{\xi}} d\tau+C_{\varepsilon_{1}} {\textcolor{red}{\delta}}.
\end{aligned}
\end{equation}
Substituting \eqref{s4.14}-\eqref{s4.16} into \eqref{s4.11}, choosing a sufficiently small constant $\varepsilon_{1}$, with the aid of Lemma \ref{yl9}, we obtain Lemma \ref{yl10}.
\end{proof}
\begin{lma}\label{yl11}
\textcolor{blue}{U}nder the same assumptions in Lemma \ref{yl9}, we have the following inequality
\begin{equation}\label{s4.18}
\begin{aligned}
&e^{\beta t} \int_{-\infty}^{+\infty} e^{\alpha\left|{\textcolor{blue}{\xi}}-{\textcolor{blue}{\xi}}_{*}\right|} {\textcolor{blue}{\Psi }}_{{\textcolor{blue}{\xi}} {\textcolor{blue}{\xi}}}^{2}(t,{\textcolor{blue}{\xi}}) d{\textcolor{blue}{\xi}}+\int_{0}^{t} e^{\beta\tau} \int_{-\infty}^{+\infty} e^{\alpha\left|{\textcolor{blue}{\xi}}-{\textcolor{blue}{\xi}}_{*}\right|} {\textcolor{blue}{\Psi }}_{{\textcolor{blue}{\xi}} {\textcolor{blue}{\xi}} {\textcolor{blue}{\xi}}}^{2}(\tau,{\textcolor{blue}{\xi}}) d{\textcolor{blue}{\xi}} d\tau \\
 \leq & C \int_{-\infty}^{+\infty} e^{\alpha\left|{\textcolor{blue}{\xi}}-{\textcolor{blue}{\xi}}_{*}\right|}\left({\textcolor{blue}{\Psi }}_{0}^{2}+{\textcolor{blue}{\Psi }}_{0 {\textcolor{blue}{\xi}}}^{2}+{\textcolor{blue}{\Psi }}_{0 {\textcolor{blue}{\xi}} {\textcolor{blue}{\xi}}}^{2}\right)({\textcolor{blue}{\xi}}) d{\textcolor{blue}{\xi}}+ C {\textcolor{red}{\delta}}.
\end{aligned}
\end{equation}
\end{lma}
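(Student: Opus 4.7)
My plan mirrors the proof of Lemma \ref{yl10} one derivative higher. I would differentiate \eqref{x3.2} twice with respect to $\xi$, yielding
\begin{equation*}
\Psi_{t\xi\xi}-s\Psi_{\xi\xi\xi}+[f(u)-f(U)]_{\xi\xi}+\mu\,\Psi_{\xi\xi\xi\xi\xi}-\gamma\,\Psi_{\xi\xi\xi\xi}=H_{\xi\xi},
\end{equation*}
multiply by $2e^{\beta t}e^{\alpha|\xi-\xi_{*}|}\Psi_{\xi\xi}$, and integrate over $[0,t]\times\mathbb{R}$. The time derivative puts the boundary quantity $e^{\beta t}\int e^{\alpha|\xi-\xi_{*}|}\Psi_{\xi\xi}^{2}$ on the left, while one integration by parts on $-2\gamma\,e^{\alpha|\cdot|}\Psi_{\xi\xi}\Psi_{\xi\xi\xi\xi}$ extracts the principal dissipation $2\gamma\int_{0}^{t}e^{\beta\tau}\int e^{\alpha|\xi-\xi_{*}|}\Psi_{\xi\xi\xi}^{2}$. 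Two integrations by parts on the dispersion $2\mu\,e^{\alpha|\cdot|}\Psi_{\xi\xi}\Psi_{\xi\xi\xi\xi\xi}$ produce a term $3\alpha\mu\,\mathrm{sgn}(\xi-\xi_{*})\Psi_{\xi\xi\xi}^{2}$ (which must be absorbed by the $2\gamma$ dissipation) together with mixed terms of type $\alpha^{2}\mu|\Psi_{\xi\xi}\Psi_{\xi\xi\xi}|$ and $\alpha\gamma|\Psi_{\xi\xi}\Psi_{\xi\xi\xi}|$, entirely analogous to $b_{2}$ and $b_{3}$ in Lemma \ref{yl10}. The transport $-s\Psi_{\xi\xi\xi}$ supplies only a harmless $\alpha|s|\Psi_{\xi\xi}^{2}$ contribution after one IBP.

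For the nonlinear flux $[f(u)-f(U)]_{\xi\xi}$ I would invoke the third identity displayed immediately after Lemma \ref{yl5}. Its divergence part becomes, upon weight IBP, a multiple of $-\alpha\,\mathrm{sgn}(\xi-\xi_{*})\Psi_{\xi\xi}\bigl[f''(\xi_{3})U_{\xi}\Psi_{\xi}+f'(U+\Psi_{\xi})\Psi_{\xi\xi}\bigr]$, while the interior remainder $-2\Psi_{\xi\xi\xi}[f''(\xi_{3})U_{\xi}\Psi_{\xi}+f'(U+\Psi_{\xi})\Psi_{\xi\xi}]$ is split via Cauchy's inequality with small parameter $\varepsilon_{2}$ into $\varepsilon_{2}\Psi_{\xi\xi\xi}^{2}+C_{\varepsilon_{2}}(\Psi_{\xi}^{2}+\Psi_{\xi\xi}^{2})$. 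The $\Psi_{\xi\xi\xi}^{2}$-fraction is absorbed into the dissipation, and the lower-order residue is integrated in time using Lemmas \ref{yl9} and \ref{yl10}. The forcing piece $2H_{\xi\xi}\Psi_{\xi\xi}$ I would first IBP once to $-2H_{\xi}\Psi_{\xi\xi\xi}$ modulo a weight-derivative term, then split via Cauchy as $\varepsilon_{2}\Psi_{\xi\xi\xi}^{2}+C_{\varepsilon_{2}}H_{\xi}^{2}$; the $H_{\xi}^{2}$ contribution is bounded by Lemma \ref{yl6} combined with the three-region split $(-\infty,-N]\cup[-N,N]\cup[N,\infty)$ used in \eqref{s4.6}, yielding a final bound $C\delta$. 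Terms with the nonlinear factor $\|\Psi_{\xi}\|_{L^{\infty}}+\|\Psi_{\xi\xi}\|_{L^{\infty}}\le C\epsilon$ are absorbed for $\epsilon$ small via Sobolev embedding.

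The main obstacle, inherited from Lemma \ref{yl10}, is to keep the net coefficient of $\int e^{\alpha|\xi-\xi_{*}|}\Psi_{\xi\xi\xi}^{2}$ on the left strictly positive: after all cross terms are absorbed it reads $2\gamma-3\alpha\mu-\mathcal{O}(\varepsilon_{2}+\epsilon+\delta)$, which is positive precisely under the assumption $0<\alpha<\min\{2\gamma/(3\mu),\sigma_{0}\}$ built into \eqref{x2.17}. Once $\varepsilon_{2}$, then $\epsilon$ and $\delta$ are chosen small, collecting the estimates and invoking Lemmas \ref{yl9}--\ref{yl10} to control every lower-order weighted integral that appears closes the estimate in the form \eqref{s4.18}.
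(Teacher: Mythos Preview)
Your proposal is correct and follows essentially the same route as the paper: differentiate \eqref{x3.2} twice, multiply by $2e^{\beta t}e^{\alpha|\xi-\xi_*|}\Psi_{\xi\xi}$, integrate, absorb the $3\alpha\mu\Psi_{\xi\xi\xi}^{2}$ dispersive loss into the $2\gamma\Psi_{\xi\xi\xi}^{2}$ dissipation, Cauchy-split the cross terms with a small parameter $\varepsilon_{2}$, and close with Lemmas \ref{yl9}--\ref{yl10}. The only cosmetic difference is that the paper bounds the forcing directly as $2|H_{\xi\xi}\Psi_{\xi\xi}|\le C\Psi_{\xi\xi}^{2}+CH_{\xi\xi}^{2}$ (using Lemma \ref{yl6} for $H_{\xi\xi}$) rather than first integrating by parts to $H_{\xi}\Psi_{\xi\xi\xi}$; both variants work equally well.
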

Here $C$ is a positive constant.
\begin{proof}
Differentiating   of \eqref{x3.2} with respect to ${\textcolor{blue}{\xi}}$ twice, multiplying the result by $2 e^{\beta t} e^{\alpha\left|{\textcolor{blue}{\xi}}-{\textcolor{blue}{\xi}}_{*}\right|} {\textcolor{blue}{\Psi }}_{{\textcolor{blue}{\xi}} {\textcolor{blue}{\xi}}}$ and integrating the result with respect to $t,{\textcolor{blue}{\xi}}$ over $[0,t] \times \mathbb{R}$, one gets that
\begin{equation}\label{s4.19}
\begin{aligned}
 &e^{\beta t} \int_{-\infty}^{+\infty} e^{\alpha\left|{\textcolor{blue}{\xi}}-{\textcolor{blue}{\xi}}_{*}\right|} {\textcolor{blue}{\Psi }}_{{\textcolor{blue}{\xi}} {\textcolor{blue}{\xi}}}^{2}(t,{\textcolor{blue}{\xi}}) d{\textcolor{blue}{\xi}}+2 \textcolor{blue}{\gamma} \int_{0}^{t} e^{\beta\tau} \int_{-\infty}^{+\infty} e^{\alpha\left|{\textcolor{blue}{\xi}}-{\textcolor{blue}{\xi}}_{*}\right|} {\textcolor{blue}{\Psi }}_{{\textcolor{blue}{\xi}} {\textcolor{blue}{\xi}} {\textcolor{blue}{\xi}}}^{2}(\tau,{\textcolor{blue}{\xi}}) d{\textcolor{blue}{\xi}} d\tau \\
\leq & \int_{-\infty}^{+\infty} e^{\alpha\left|{\textcolor{blue}{\xi}}-{\textcolor{blue}{\xi}}_{*}\right|} {\textcolor{blue}{\Psi }}_{0 {\textcolor{blue}{\xi}} {\textcolor{blue}{\xi}}}^{2}({\textcolor{blue}{\xi}}) d{\textcolor{blue}{\xi}}+\sum_{i{\textcolor{red}{=}}1}^6 \int_{0}^{t} e^{\beta\tau} \int_{-\infty}^{+\infty} e^{\alpha\left|{\textcolor{blue}{\xi}}-{\textcolor{blue}{\xi}}_{*}\right|} m_i(\tau,{\textcolor{blue}{\xi}}) d{\textcolor{blue}{\xi}} d\tau,
\end{aligned}
\end{equation}
where
\begin{equation*}
\begin{aligned}
m_1{\textcolor{red}{=}}&(\beta+\alpha \left|2 f'\left(\textcolor{blue}{U}+{\textcolor{blue}{\Psi }}_{{\textcolor{blue}{\xi}}}\right)-s\right| ) e^{\alpha\left|{\textcolor{blue}{\xi}}-{\textcolor{blue}{\xi}}_{*}|\right.}{\textcolor{blue}{\Psi }}_{{\textcolor{blue}{\xi}} {\textcolor{blue}{\xi}}}^{2},& m_2{\textcolor{red}{=}}&3 \alpha\textcolor{blue}{\mu}{\textcolor{blue}{\Psi }}_{{\textcolor{blue}{\xi}}{\textcolor{blue}{\xi}} {\textcolor{blue}{\xi}}}^{2},\\
m_3{\textcolor{red}{=}}&{2\alpha\left|f''\left({\textcolor{blue}{\xi}}_{3}\right) \textcolor{blue}{U}_{{\textcolor{blue}{\xi}}} {\textcolor{blue}{\Psi }}_{{\textcolor{blue}{\xi}}} {\textcolor{blue}{\Psi }}_{{\textcolor{blue}{\xi}} {\textcolor{blue}{\xi}}}\right|},&
m_4{\textcolor{red}{=}}&{2\alpha(\textcolor{blue}{\gamma}+\alpha\textcolor{blue}{\mu})\left|{\textcolor{blue}{\Psi }}_{{\textcolor{blue}{\xi}}{\textcolor{blue}{\xi}}} {\textcolor{blue}{\Psi }}_{{\textcolor{blue}{\xi}}{\textcolor{blue}{\xi}}{\textcolor{blue}{\xi}}}\right|},\\
m_5{\textcolor{red}{=}}&  {2\left\{\left|f''\left({\textcolor{blue}{\xi}}_{3}\right) \textcolor{blue}{U}_{{\textcolor{blue}{\xi}}} {\textcolor{blue}{\Psi }}_{{\textcolor{blue}{\xi}}}\right|+\left|f'\left(\textcolor{blue}{U}+{\textcolor{blue}{\Psi }}_{{\textcolor{blue}{\xi}}}\right) {\textcolor{blue}{\Psi }}_{{\textcolor{blue}{\xi}}{\textcolor{blue}{\xi}}}\right| \right\}\left|{\textcolor{blue}{\Psi }}_{{\textcolor{blue}{\xi}}{\textcolor{blue}{\xi}}{\textcolor{blue}{\xi}}}\right|},&
m_6{\textcolor{red}{=}}&{2\left|{\textcolor{blue}{\Psi }}_{{\textcolor{blue}{\xi}}{\textcolor{blue}{\xi}}} H_{{\textcolor{blue}{\xi}}{\textcolor{blue}{\xi}}}\right|}.
\end{aligned}
\end{equation*}
Now we estimate the last four terms $M_{i}(i{\textcolor{red}{=}}3,4,5,6)$ on the right-hand side of $\eqref{s4.19}$, where $ M_{i}{\textcolor{red}{=}}\int_{0}^{t} e^{\beta\tau} \int_{-\infty}^{+\infty}  e^{\alpha\left|{\textcolor{blue}{\xi}}-{\textcolor{blue}{\xi}}_{*}\right|} m_i d{\textcolor{blue}{\xi}} d\tau$. With the aid of Cauchy inequality, we have
\begin{equation}\label{s4.20}
\begin{aligned}
M_{i} \leq & \varepsilon_{2} \int_{0}^{t} e^{\beta\tau} \int_{-\infty}^{+\infty} e^{\alpha\left|{\textcolor{blue}{\xi}}-{\textcolor{blue}{\xi}}_{*}\right|} {\textcolor{blue}{\Psi }}_{{\textcolor{blue}{\xi}}{\textcolor{blue}{\xi}}{\textcolor{blue}{\xi}}}^{2}(\tau,{\textcolor{blue}{\xi}}) d{\textcolor{blue}{\xi}} d\tau\\
&+C_{ \varepsilon_{2}} \int_{0}^{t} e^{\beta\tau} \int_{-\infty}^{+\infty} e^{\alpha\left|{\textcolor{blue}{\xi}}-{\textcolor{blue}{\xi}}_{*}\right|}\left({\textcolor{blue}{\Psi }}_{{\textcolor{blue}{\xi}}}^{2}+{\textcolor{blue}{\Psi }}_{{\textcolor{blue}{\xi}} {\textcolor{blue}{\xi}}}^{2}\right)(\tau,{\textcolor{blue}{\xi}}) d{\textcolor{blue}{\xi}} d\tau,\quad  i{\textcolor{red}{=}}3,4,5.
\end{aligned}
\end{equation}
Similar to \eqref{s4.6}, we have
\begin{equation}\label{s4.23}
\begin{aligned}
M_{6}\leq& C \int_{0}^{t} e^{\beta\tau} \int_{-\infty}^{+\infty} e^{\alpha\left|{\textcolor{blue}{\xi}}-{\textcolor{blue}{\xi}}_{*}\right|} {\textcolor{blue}{\Psi }}_{{\textcolor{blue}{\xi}}{\textcolor{blue}{\xi}}}^{2}(\tau,{\textcolor{blue}{\xi}}) d{\textcolor{blue}{\xi}} d\tau\\
&+C \int_{0}^{t} e^{\beta\tau} \int_{-\infty}^{+\infty} e^{\alpha\left|{\textcolor{blue}{\xi}}-{\textcolor{blue}{\xi}}_{*}\right|} H_{{\textcolor{blue}{\xi}}{\textcolor{blue}{\xi}}}^{2} (\tau,{\textcolor{blue}{\xi}}) d{\textcolor{blue}{\xi}} d\tau\\
\leq& C \int_{0}^{t} e^{\beta\tau} \int_{-\infty}^{+\infty} e^{\alpha\left|{\textcolor{blue}{\xi}}-{\textcolor{blue}{\xi}}_{*}\right|} {\textcolor{blue}{\Psi }}_{{\textcolor{blue}{\xi}}{\textcolor{blue}{\xi}}}^{2}(\tau,{\textcolor{blue}{\xi}}) d{\textcolor{blue}{\xi}} d\tau+C {\textcolor{red}{\delta}}
\end{aligned}
\end{equation}
using Lemma \ref{yl6}. Substituting \eqref{s4.20}-\eqref{s4.23} into \eqref{s4.19},  choosing a sufficiently small constant $\varepsilon_{2}$ with the aid of Lemma \ref{yl9} and  Lemma \ref{yl10}, we obtain the proof of Lemma \ref{yl11}.
\end{proof}
\begin{lma}\label{yl12}
\textcolor{blue}{U}nder the same assumptions in Lemma \ref{yl9},  we have the following inequality
\begin{equation}
\begin{aligned}
&e^{\beta t} \int_{-\infty}^{+\infty} e^{\alpha\left|{\textcolor{blue}{\xi}}-{\textcolor{blue}{\xi}}_{*}\right|} {\textcolor{blue}{\Psi }}_{{\textcolor{blue}{\xi}} {\textcolor{blue}{\xi}} {\textcolor{blue}{\xi}}}^{2}(t,{\textcolor{blue}{\xi}}) d{\textcolor{blue}{\xi}}+\int_{0}^{t} e^{\beta\tau} \int_{-\infty}^{+\infty} e^{\alpha\left|{\textcolor{blue}{\xi}}-{\textcolor{blue}{\xi}}_{*}\right|} {\textcolor{blue}{\Psi }}_{{\textcolor{blue}{\xi}}{\textcolor{blue}{\xi}}{\textcolor{blue}{\xi}}{\textcolor{blue}{\xi}}}^{2}(\tau,{\textcolor{blue}{\xi}}) d{\textcolor{blue}{\xi}} d\tau \\
 \leq& C \int_{-\infty}^{+\infty} e^{\alpha\left|{\textcolor{blue}{\xi}}-{\textcolor{blue}{\xi}}_{*}\right|}\left({\textcolor{blue}{\Psi }}_{0}^{2}+{\textcolor{blue}{\Psi }}_{0{\textcolor{blue}{\xi}}}^{2}+{\textcolor{blue}{\Psi }}_{0{\textcolor{blue}{\xi}} {\textcolor{blue}{\xi}}}^{2}+{\textcolor{blue}{\Psi }}_{0 {\textcolor{blue}{\xi}}{\textcolor{blue}{\xi}}{\textcolor{blue}{\xi}}}^{2}\right)({\textcolor{blue}{\xi}}) d{\textcolor{blue}{\xi}}+C {\textcolor{red}{\delta}}.
\end{aligned}
\end{equation}
\end{lma}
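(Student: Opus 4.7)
The proof proceeds by the same energy method used in Lemmas \ref{yl10} and \ref{yl11}, pushed one derivative higher. I would differentiate the reformulated equation \eqref{x3.2} three times with respect to $\xi$, multiply the resulting identity by $2e^{\beta t}e^{\alpha|\xi-\xi_*|}\Psi_{\xi\xi\xi}$, and integrate over $[0,t]\times\mathbb{R}$. After integration by parts, the viscous term produces the positive dissipation $2\gamma\int_0^t e^{\beta\tau}\int e^{\alpha|\xi-\xi_*|}\Psi_{\xi\xi\xi\xi}^2\,d\xi\,d\tau$ on the left-hand side. The dispersive term, after one further integration by parts against the weight, contributes an indefinite boundary-type piece $3\alpha\mu\,\mathrm{sign}(\xi-\xi_*)\Psi_{\xi\xi\xi\xi}^2$, which is strictly dominated by the viscous dissipation under $\alpha<2\gamma/(3\mu)$ as forced by \eqref{x2.18}. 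The time derivative of $e^{\beta t}$ together with the linear convection contribute the coefficient $(\beta+\alpha|2f'(U+\Psi_\xi)-s|)\Psi_{\xi\xi\xi}^2$, directly analogous to $m_1$ in Lemma \ref{yl11}.

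The nonlinear flux contribution $2\Psi_{\xi\xi\xi}[f(U+\Psi_\xi)-f(U)]_{\xi\xi\xi}$ is expanded via the fourth Taylor-type identity already collected in Section \ref{section4}, yielding a pure divergence plus $-2\Psi_{\xi\xi\xi\xi}$ times a polynomial in $f',f'',f'''$, $U_\xi$, $U_{\xi\xi}$, $\Psi_\xi$, $\Psi_{\xi\xi}$, $\Psi_{\xi\xi\xi}$. Each such product is split by Cauchy's inequality with a small parameter $\varepsilon_3$: the $\varepsilon_3\Psi_{\xi\xi\xi\xi}^2$ share is absorbed by the viscous dissipation, while the remainder consists of weighted $L^2$ norms of $\Psi_\xi$, $\Psi_{\xi\xi}$ and $\Psi_{\xi\xi\xi}$, each multiplied either by $\|\Psi\|_{L^\infty}\le C\epsilon$ (Sobolev embedding together with the a priori bound $\|\Psi\|_3\le\epsilon$) or by $\|U_\xi\|_{L^\infty},\|U_{\xi\xi}\|_{L^\infty}$, which are bounded by construction of the ansatz \eqref{x2.6}. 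The error contribution $2e^{\alpha|\xi-\xi_*|}\Psi_{\xi\xi\xi}H_{\xi\xi\xi}$ is treated exactly as in \eqref{s4.6}, \eqref{s4.16} and \eqref{s4.23}: one integration by parts moves a derivative onto the weight and onto $\Psi_{\xi\xi\xi}$, then Cauchy splits off $\varepsilon_3\Psi_{\xi\xi\xi\xi}^2$ plus $C_{\varepsilon_3}H_{\xi\xi}^2$, and the latter is dominated by $C\delta$ via Lemma \ref{yl6} and $\alpha<\sigma_0$ after the three-piece splitting of the spatial integral as in \eqref{s4.6}.

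Collecting all contributions, the estimate takes the schematic form
\begin{equation*}
\mathrm{LHS}\le C\int e^{\alpha|\xi-\xi_*|}\left(\Psi_0^2+\Psi_{0\xi}^2+\Psi_{0\xi\xi}^2+\Psi_{0\xi\xi\xi}^2\right)d\xi+C\delta+C\int_0^t e^{\beta\tau}\int e^{\alpha|\xi-\xi_*|}\left(\Psi_\xi^2+\Psi_{\xi\xi}^2+\Psi_{\xi\xi\xi}^2\right)d\xi\,d\tau.
\end{equation*}
The main obstacle is the final space-time integral, whose three pieces are not immediately controlled by the left-hand side of the $\Psi_{\xi\xi\xi}$-level energy. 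This is precisely where Lemmas \ref{yl9}, \ref{yl10} and \ref{yl11} enter: their respective dissipation integrals bound exactly the weighted space-time norms of $\Psi_\xi$, $\Psi_{\xi\xi}$ and $\Psi_{\xi\xi\xi}$ in terms of the initial data in $H^3$ plus $C\delta$. Substituting those bounds and choosing $\varepsilon_3$ small enough that the cumulative $\Psi_{\xi\xi\xi\xi}^2$ share remains a strict fraction of $2\gamma$, while keeping the smallness of $\epsilon$ and $\delta$, closes the estimate and yields the stated inequality of Lemma \ref{yl12}.
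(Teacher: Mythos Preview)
Your proposal is correct and follows essentially the same route as the paper: differentiate \eqref{x3.2} three times, multiply by $2e^{\beta t}e^{\alpha|\xi-\xi_*|}\Psi_{\xi\xi\xi}$, integrate, absorb the dispersion into the viscous dissipation via $\alpha<2\gamma/(3\mu)$, split the nonlinear remainders by Cauchy with a small $\varepsilon_3$, and close using Lemmas \ref{yl9}--\ref{yl11}. The only deviation is your treatment of the source term: the paper simply applies Cauchy directly to $2|\Psi_{\xi\xi\xi}H_{\xi\xi\xi}|$, yielding $\Psi_{\xi\xi\xi}^2+H_{\xi\xi\xi}^2$ (the first absorbed by Lemma \ref{yl11}, the second by Lemma \ref{yl6}), whereas you first integrate by parts to land on $\Psi_{\xi\xi\xi\xi}$ and $H_{\xi\xi}$---this extra step is unnecessary but harmless.
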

\begin{proof}
Differentiating of \eqref{x3.2} with respect to ${\textcolor{blue}{\xi}}$ three times, multiplying the result by $2 e^{\beta t} e^{\alpha\left|{\textcolor{blue}{\xi}}-{\textcolor{blue}{\xi}}_{*}\right|} {\textcolor{blue}{\Psi }}_{{\textcolor{blue}{\xi}}{\textcolor{blue}{\xi}}{\textcolor{blue}{\xi}}}$ and integrating the result with respect to $t,{\textcolor{blue}{\xi}}$ over $[0,t] \times \mathbb{R}$,  we have
\begin{equation}\label{sz4.13}
\begin{aligned}
&e^{\beta t} \int_{-\infty}^{+\infty} e^{\alpha\left|{\textcolor{blue}{\xi}}-{\textcolor{blue}{\xi}}_{*}\right|} {\textcolor{blue}{\Psi }}_{{\textcolor{blue}{\xi}}{\textcolor{blue}{\xi}}{\textcolor{blue}{\xi}}}^{2}(t,{\textcolor{blue}{\xi}}) d{\textcolor{blue}{\xi}}+2 \textcolor{blue}{\gamma} \int_{0}^{t} e^{\beta\tau} \int_{-\infty}^{+\infty} e^{\alpha\left|{\textcolor{blue}{\xi}}-{\textcolor{blue}{\xi}}_{*}\right|} {\textcolor{blue}{\Psi }}_{{\textcolor{blue}{\xi}} {\textcolor{blue}{\xi}}{\textcolor{blue}{\xi}}{\textcolor{blue}{\xi}}}^{2}(\tau,{\textcolor{blue}{\xi}}) d{\textcolor{blue}{\xi}} d\tau\\
\leq& \int_{-\infty}^{+\infty} e^{\alpha\left|{\textcolor{blue}{\xi}}-{\textcolor{blue}{\xi}}_{*}\right|} {\textcolor{blue}{\Psi }}_{0{\textcolor{blue}{\xi}} {\textcolor{blue}{\xi}}{\textcolor{blue}{\xi}}}^{2}({\textcolor{blue}{\xi}}) d{\textcolor{blue}{\xi}}+\int_{0}^{t} e^{\beta\tau} \int_{-\infty}^{+\infty} \sum_{i{\textcolor{red}{=}}1}^7  e^{\alpha\left|{\textcolor{blue}{\xi}}-{\textcolor{blue}{\xi}}_{*}\right|} n_i d{\textcolor{blue}{\xi}} d\tau,
\end{aligned}
\end{equation}
where
\begin{equation*}
\begin{aligned}
n_1{\textcolor{red}{=}}& (\beta+s\alpha)  {\textcolor{blue}{\Psi }}_{{\textcolor{blue}{\xi}}{\textcolor{blue}{\xi}}{\textcolor{blue}{\xi}}}^{2}, &n_2{\textcolor{red}{=}}&3 \alpha\textcolor{blue}{\mu} {\textcolor{blue}{\Psi }}_{{\textcolor{blue}{\xi}} {\textcolor{blue}{\xi}}{\textcolor{blue}{\xi}}{\textcolor{blue}{\xi}}}^{2},\\
n_3{\textcolor{red}{=}}& {2(\alpha^{2}\textcolor{blue}{\mu} +\alpha \textcolor{blue}{\gamma} +|f'\left(\textcolor{blue}{U}+{\textcolor{blue}{\Psi }}_{{\textcolor{blue}{\xi}} }\right)|)\left|{\textcolor{blue}{\Psi }}_{{\textcolor{blue}{\xi}}{\textcolor{blue}{\xi}}{\textcolor{blue}{\xi}}} {\textcolor{blue}{\Psi }}_{{\textcolor{blue}{\xi}}{\textcolor{blue}{\xi}}{\textcolor{blue}{\xi}} {\textcolor{blue}{\xi}}}\right|} ,&
n_4{\textcolor{red}{=}}&2 \left|f''\left({\textcolor{blue}{\xi}}_{3}\right)\textcolor{blue}{U}_{{\textcolor{blue}{\xi}}{\textcolor{blue}{\xi}}} {\textcolor{blue}{\Psi }}_{{\textcolor{blue}{\xi}}} {\textcolor{blue}{\Psi }}_{{\textcolor{blue}{\xi}}{\textcolor{blue}{\xi}}{\textcolor{blue}{\xi}} {\textcolor{blue}{\xi}}}\right|,\\
n_5{\textcolor{red}{=}}&4 \left|f''\left(\textcolor{blue}{U}+{\textcolor{blue}{\Psi }}_{{\textcolor{blue}{\xi}}}\right) \textcolor{blue}{U}_{{\textcolor{blue}{\xi}}}{\textcolor{blue}{\Psi }}_{{\textcolor{blue}{\xi}}{\textcolor{blue}{\xi}}{\textcolor{blue}{\xi}}{\textcolor{blue}{\xi}}} {\textcolor{blue}{\Psi }}_{{\textcolor{blue}{\xi}} {\textcolor{blue}{\xi}}}\right|,&
n_6{\textcolor{red}{=}}&2 \left|f'''\left({\textcolor{blue}{\xi}}_{4}\right) \textcolor{blue}{U}_{{\textcolor{blue}{\xi}}}^{2}{\textcolor{blue}{\Psi }}_{{\textcolor{blue}{\xi}}} {\textcolor{blue}{\Psi }}_{{\textcolor{blue}{\xi}}{\textcolor{blue}{\xi}}{\textcolor{blue}{\xi}}{\textcolor{blue}{\xi}} }\right|,\\
n_7{\textcolor{red}{=}}&2 \left|f''\left(\textcolor{blue}{U}+{\textcolor{blue}{\Psi }}_{{\textcolor{blue}{\xi}}}\right) {\textcolor{blue}{\Psi }}_{{\textcolor{blue}{\xi}}{\textcolor{blue}{\xi}}{\textcolor{blue}{\xi}}{\textcolor{blue}{\xi}}} {\textcolor{blue}{\Psi }}^{2}_{{\textcolor{blue}{\xi}} {\textcolor{blue}{\xi}}}\right|,&
n_8{\textcolor{red}{=}}&2 \left|{\textcolor{blue}{\Psi }}_{{\textcolor{blue}{\xi}}{\textcolor{blue}{\xi}}{\textcolor{blue}{\xi}}} H_{{\textcolor{blue}{\xi}}{\textcolor{blue}{\xi}}{\textcolor{blue}{\xi}}} \right|.
\end{aligned}
\end{equation*}
Now we estimate the last six terms $N_{i}(i{\textcolor{red}{=}}3,4,5,6,7,8)$ on the right-hand side of \eqref{sz4.13}, where $ N_{i}{\textcolor{red}{=}}\int_{0}^{t} e^{\beta\tau} \int_{-\infty}^{+\infty}  e^{\alpha\left|{\textcolor{blue}{\xi}}-{\textcolor{blue}{\xi}}_{*}\right|} n_i d{\textcolor{blue}{\xi}} d\tau$. With the aid of Cauchy inequality,  for $i{\textcolor{red}{=}}3,4,5,6$, we have
\begin{equation}\label{sz4.14}
\begin{aligned}
N_{i} \leq&  \varepsilon_{3} \int_{0}^{t} e^{\beta\tau} \int_{-\infty}^{+\infty} e^{\alpha\left|{\textcolor{blue}{\xi}}-{\textcolor{blue}{\xi}}_{*}\right|} {\textcolor{blue}{\Psi }}_{{\textcolor{blue}{\xi}} {\textcolor{blue}{\xi}}{\textcolor{blue}{\xi}} {\textcolor{blue}{\xi}}}^{2}(\tau,{\textcolor{blue}{\xi}}) d{\textcolor{blue}{\xi}} d\tau\\
&+C_{\varepsilon_{3}} \int_{0}^{t} e^{\beta\tau} \int_{-\infty}^{+\infty} e^{\alpha\left|{\textcolor{blue}{\xi}}-{\textcolor{blue}{\xi}}_{*}\right|} ({\textcolor{blue}{\Psi }}_{{\textcolor{blue}{\xi}}}+{\textcolor{blue}{\Psi }}_{{\textcolor{blue}{\xi}}{\textcolor{blue}{\xi}}}+{\textcolor{blue}{\Psi }}_{{\textcolor{blue}{\xi}}{\textcolor{blue}{\xi}}{\textcolor{blue}{\xi}}})^{2}(\tau,{\textcolor{blue}{\xi}}) d{\textcolor{blue}{\xi}} d\tau.
\end{aligned}
\end{equation}
For $N_7$, with the help of Sobolev inequality, we have
\begin{equation}
\begin{aligned}
N_{7}\leq & C \int_{0}^{t} e^{\beta\tau} \|{\textcolor{blue}{\Psi }}\|_{3} \int_{-\infty}^{+\infty} e^{\alpha\left|{\textcolor{blue}{\xi}}-{\textcolor{blue}{\xi}}_{*}\right|}\left| {\textcolor{blue}{\Psi }}_{{\textcolor{blue}{\xi}}{\textcolor{blue}{\xi}}{\textcolor{blue}{\xi}}{\textcolor{blue}{\xi}}} {\textcolor{blue}{\Psi }}_{{\textcolor{blue}{\xi}} {\textcolor{blue}{\xi}}}\right|(\tau,{\textcolor{blue}{\xi}}) d{\textcolor{blue}{\xi}} d\tau\\
\leq&  \varepsilon_{3} \int_{0}^{t} e^{\beta\tau} \int_{-\infty}^{+\infty} e^{\alpha\left|{\textcolor{blue}{\xi}}-{\textcolor{blue}{\xi}}_{*}\right|} {\textcolor{blue}{\Psi }}_{{\textcolor{blue}{\xi}} {\textcolor{blue}{\xi}}{\textcolor{blue}{\xi}} {\textcolor{blue}{\xi}}}^{2}(\tau,{\textcolor{blue}{\xi}}) d{\textcolor{blue}{\xi}} d\tau\\
&+C_{\varepsilon_{3}} \int_{0}^{t} e^{\beta\tau} \int_{-\infty}^{+\infty} e^{\alpha\left|{\textcolor{blue}{\xi}}-{\textcolor{blue}{\xi}}_{*}\right|}  {\textcolor{blue}{\Psi }}_{{\textcolor{blue}{\xi}}{\textcolor{blue}{\xi}}}^{2}(\tau,{\textcolor{blue}{\xi}}) d{\textcolor{blue}{\xi}} d\tau.
\end{aligned}
\end{equation}
\textcolor{blue}{U}sing Lemma \ref{yl6}, similar to \eqref{s4.6}, we have
\begin{equation}\label{sz4.16}
\begin{aligned}
N_{8} \leq & C \int_{0}^{t} e^{\beta\tau} \int_{-\infty}^{+\infty} e^{\alpha\left|{\textcolor{blue}{\xi}}-{\textcolor{blue}{\xi}}_{*}\right|} {\textcolor{blue}{\Psi }}_{{\textcolor{blue}{\xi}}{\textcolor{blue}{\xi}}{\textcolor{blue}{\xi}}}^{2}(\tau,{\textcolor{blue}{\xi}}) d{\textcolor{blue}{\xi}} d\tau\\
&+C  \int_{0}^{t} e^{\beta\tau} \int_{-\infty}^{+\infty} e^{\alpha\left|{\textcolor{blue}{\xi}}-{\textcolor{blue}{\xi}}_{*}\right|} H_{{\textcolor{blue}{\xi}}{\textcolor{blue}{\xi}}{\textcolor{blue}{\xi}}}^{2}(\tau,{\textcolor{blue}{\xi}}) d{\textcolor{blue}{\xi}} d\tau\\
 \leq & C \int_{0}^{t} e^{\beta\tau} \int_{-\infty}^{+\infty} e^{\alpha\left|{\textcolor{blue}{\xi}}-{\textcolor{blue}{\xi}}_{*}\right|} {\textcolor{blue}{\Psi }}_{{\textcolor{blue}{\xi}}{\textcolor{blue}{\xi}}{\textcolor{blue}{\xi}}}^{2}(\tau,{\textcolor{blue}{\xi}}) d{\textcolor{blue}{\xi}} d\tau+C  {\textcolor{red}{\delta}}.
\end{aligned}
\end{equation}
Substituting $\eqref{sz4.14}$-$\eqref{sz4.16}$ into $\eqref{sz4.13}$, choosing a sufficiently small constant $\varepsilon_{3}$,  with the aid of Lemma \ref{yl9}-Lemma \ref{yl11}, we obtain the proof of Lemma \ref{yl12}.
\end{proof}
\section{Proof of the Main Results}\label{section5}
\subsection{Proof of Theorem \ref{theorem201}  }
Taking $\alpha{\textcolor{red}{=}}\beta{\textcolor{red}{=}}0$ in Lemma \ref{yl9}-Lemma \ref{yl12}, one gets Proposition  \ref{pp3}. Making full use of   Proposition \ref{pp3}, we can extend the unique local solution ${\textcolor{blue}{\Psi }}$ to $T {\textcolor{red}{=}}+\infty$ by the standard continuation process.
As long as Proposition \ref{pp3} is proved, we can extend the unique local solution $ {\textcolor{blue}{\Psi }} $ to $T {\textcolor{red}{=}}+\infty $ by the standard continuation process. We have the following lemma.
\begin{lma}\label{yl7} Suppose ${\textcolor{blue}{\Psi }}_{0} \in H^{3}$ there exists a positive constant $\epsilon_{1}{\textcolor{red}{=}}\frac{\epsilon_{0}}{\sqrt{C_{1}}}$, such that if
$$ \left\|{\textcolor{blue}{\Psi }}_{0}\right\|_{3}^{2} + {\textcolor{red}{\delta}}\leq \epsilon_{1}^{2},$$ then the initial  problem \eqref{x3.2}, \eqref{x2017} has a unique global solution
\begin{equation}
\sup_{t\geq0}\|{\textcolor{blue}{\Psi }}(t)\|_{3}^{2}+\int_{0}^{\infty} \left\|{\textcolor{blue}{\Psi }}_{{\textcolor{blue}{\xi}}}(\tau)\right\|_{3}^{2}
d\tau\leq  C_{1}\left(\left\|{\textcolor{blue}{\Psi }}_{0}\right\|_{3}^{2}+{\textcolor{red}{\delta}}\right).
\end{equation}
\end{lma}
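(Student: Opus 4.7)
\textbf{Proof plan for Lemma \ref{yl7}.} The plan is to combine a short-time existence result with the a priori estimate of Proposition~\ref{pp3} by a standard continuation argument, calibrating the smallness threshold so that the a priori bound strictly improves the bootstrap hypothesis.

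First, I would establish local well-posedness: for any $\Psi_0\in H^3(\mathbb{R})$, the perturbation equation \eqref{x3.2} admits a unique local solution $\Psi\in C([0,T_0];H^3)$ with $\Psi_\xi\in L^2(0,T_0;H^3)$ on some time interval $[0,T_0]$; in particular $\Psi\in X_\epsilon(0,T_0)$ for some $\epsilon>0$. This is standard parabolic--dispersive theory: the leading dissipation $\gamma\Psi_{\xi\xi}$ provides $H^3$ parabolic smoothing, the dispersive term $\mu\Psi_{\xi\xi\xi}$ is a skew-symmetric lower-order perturbation of it, the nonlinear contribution $f(U+\Psi_\xi)-f(U)=f'(\xi_2)\Psi_\xi$ is first-order in $\Psi$, and the inhomogeneity $H$ is controlled by Lemma~\ref{yl6}. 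The coefficients coming from $U$ are smooth and bounded uniformly in time, as guaranteed by Lemma~\ref{yl2} and Lemma~\ref{yl5}. A contraction mapping argument in $X_\epsilon(0,T_0)$ closes local existence and uniqueness.

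Next, set $\epsilon_1:=\epsilon_0/\sqrt{C_1}$, where $\epsilon_0$ and $C_1$ are the constants furnished by Proposition~\ref{pp3}, and let $T_{\max}$ be the supremum of $T\ge 0$ for which a unique solution $\Psi\in X_\epsilon(0,T)$ exists with $\sup_{0\le t\le T}\|\Psi(t)\|_3\le\epsilon_0$. Suppose $\|\Psi_0\|_3^2+\delta\le\epsilon_1^2$. Then, for every $T<T_{\max}$, Proposition~\ref{pp3} yields
\[
\|\Psi(t)\|_3^2+\int_0^t\|\Psi_\xi(\tau)\|_3^2\,d\tau\le C_1\bigl(\|\Psi_0\|_3^2+\delta\bigr)\le C_1\epsilon_1^2=\epsilon_0^2,
\]
so that $\sup_{0\le t\le T}\|\Psi(t)\|_3\le\epsilon_0$ uniformly in $T<T_{\max}$. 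If $T_{\max}<\infty$, one could take $\Psi(T_{\max})$ as a new initial datum (using the uniform $H^3$ bound), reapply the local existence step, and extend past $T_{\max}$, contradicting its maximality. Hence $T_{\max}=+\infty$, and passing $t\to\infty$ in the inequality above delivers the stated global bound.

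The main obstacle I anticipate lies in the local existence step, because \eqref{x3.2} mixes third-order KdV dispersion with second-order Burgers dissipation inside a quasilinear structure. One must verify that $H^3$ energy estimates genuinely close against the third-derivative term, which is where the skew-symmetry $\int \partial_\xi^k\Psi\,\partial_\xi^{k+3}\Psi\,d\xi=0$ (up to boundary terms and commutators) is decisive, and one must absorb commutators of $f'(\xi_2)\partial_\xi$ with high-derivative projections using the bounds on $U$ from Lemma~\ref{yl5}. Once the local theory is secured, the continuation step is essentially automatic thanks to the clean structure of Proposition~\ref{pp3}.
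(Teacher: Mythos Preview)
Your proposal is correct and follows exactly the route the paper takes: the paper simply invokes ``the standard continuation process'' based on Proposition~\ref{pp3} without spelling out any details, so your write-up is in fact a more explicit version of the same argument. The calibration $\epsilon_1=\epsilon_0/\sqrt{C_1}$ and the bootstrap via $T_{\max}$ match the paper's intent precisely.
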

Combining Lemma \ref{yl5} and Lemma \ref{yl7}, we complete the proof of Theorem  \ref{theorem201}.
\subsection{Proof of Theorem \ref{theorem2.1}}
Taking  $\alpha>0,\beta>0,$ in Lemma \ref{yl9}-Lemma \ref{yl12}, we have
\begin{lma}\label{yl8}Suppose that ${\textcolor{blue}{\Psi }}(t,{\textcolor{blue}{\xi}})$  is a global smooth solution to the Cauchy problem \eqref{x3.2},\eqref{x2017}. If \eqref{x2.17} holds, there exists a positive constant $\epsilon_{2}<\epsilon_{1}$, such that if $ \left\|{\textcolor{blue}{\Psi }}_0 \right\|_{3}^{2} + {\textcolor{red}{\delta}}\leq \epsilon_{2}^{2}$, we have
$$
\|{\textcolor{blue}{\Psi }}(t)\|_{H^{3}}^{2} \leq C_{2} e^{-\beta t}\left\{\int_{-\infty}^{+\infty} e^{\alpha\left|{\textcolor{blue}{\xi}}-{\textcolor{blue}{\xi}}_{*}\right|}\left({\textcolor{blue}{\Psi }}_{0}^{2}+{\textcolor{blue}{\Psi }}_{0{\textcolor{blue}{\xi}}}^{2}+{\textcolor{blue}{\Psi }}_{0{\textcolor{blue}{\xi}} {\textcolor{blue}{\xi}}}^{2}+{\textcolor{blue}{\Psi }}_{0{\textcolor{blue}{\xi}}{\textcolor{blue}{\xi}}{\textcolor{blue}{\xi}}}^{2}\right)({\textcolor{blue}{\xi}}) d{\textcolor{blue}{\xi}}+{\textcolor{red}{\delta}}\right\}.
$$
Here $C_{2}>0$ is a constant, $\alpha $ and $\beta $ are two positive constants,  satisfying \eqref{x2.18}.
\end{lma}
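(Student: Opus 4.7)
The plan is to derive Lemma \ref{yl8} as a direct corollary of the weighted energy estimates Lemma \ref{yl9}--Lemma \ref{yl12}, applied with the positive parameters $\alpha,\beta$ from \eqref{x2.18} rather than with the choice $\alpha=\beta=0$ that yielded Proposition \ref{pp3}. First I would fix $\epsilon_{2}\in(0,\epsilon_{1}]$ small enough that the smallness threshold $\epsilon_{0}$ appearing implicitly in Lemma \ref{yl9}--Lemma \ref{yl12} is respected; since the global solution produced by Lemma \ref{yl7} already satisfies the a priori bound $\sup_{t\ge 0}\|\Psi(t)\|_{3}\le\sqrt{C_{1}}\,\epsilon_{2}\le\epsilon_{1}$, choosing $\epsilon_{2}$ sufficiently small lets us invoke all four weighted lemmas on $[0,\infty)$.

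Next I would sum the conclusions of Lemma \ref{yl9}, Lemma \ref{yl10}, Lemma \ref{yl11} and Lemma \ref{yl12}. Discarding the nonnegative spatial dissipation terms $\int_{0}^{t}e^{\beta\tau}\int e^{\alpha|\xi-\xi_{*}|}\bigl(\Psi_{\xi}^{2}+\Psi_{\xi\xi}^{2}+\Psi_{\xi\xi\xi}^{2}+\Psi_{\xi\xi\xi\xi}^{2}\bigr)d\xi d\tau$ on the left, one obtains
\begin{equation*}
e^{\beta t}\int_{-\infty}^{+\infty}e^{\alpha|\xi-\xi_{*}|}\bigl(\Psi^{2}+\Psi_{\xi}^{2}+\Psi_{\xi\xi}^{2}+\Psi_{\xi\xi\xi}^{2}\bigr)(t,\xi)\,d\xi\le C\int_{-\infty}^{+\infty}e^{\alpha|\xi-\xi_{*}|}\bigl(\Psi_{0}^{2}+\Psi_{0\xi}^{2}+\Psi_{0\xi\xi}^{2}+\Psi_{0\xi\xi\xi}^{2}\bigr)(\xi)\,d\xi+C\delta.
\end{equation*}
Hypothesis \eqref{x2.17} guarantees finiteness of the right-hand side. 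The elementary pointwise bound $e^{\alpha|\xi-\xi_{*}|}\ge 1$ then gives $\|\Psi(t)\|_{H^{3}}^{2}\le e^{-\beta t}\bigl\{C\int e^{\alpha|\xi-\xi_{*}|}(\Psi_{0}^{2}+\Psi_{0\xi}^{2}+\Psi_{0\xi\xi}^{2}+\Psi_{0\xi\xi\xi}^{2})d\xi+C\delta\bigr\}$, which is the claimed estimate with $C_{2}=C$.

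The only real obstacle is verifying that the weighted estimates of Lemma \ref{yl9}--Lemma \ref{yl12} are in fact applicable along the entire global trajectory. To make this rigorous I would argue in two steps: (i) the global solution ${\Psi}$ produced by Lemma \ref{yl7} belongs to $X_{\epsilon}(0,T)$ for every $T>0$ with a uniform $\epsilon$ controlled by $\epsilon_{2}$, so the $\epsilon$-smallness hypothesis of Lemma \ref{yl9}--Lemma \ref{yl12} holds; (ii) the weighted $H^{3}$ norm of ${\Psi}(t,\cdot)$ is finite for each $t$, which follows by a standard approximation/regularity argument because the equation \eqref{x3.2} propagates the exponential weight $e^{\alpha|\xi-\xi_{*}|/2}$ in view of the parabolic term $-\gamma\Psi_{\xi\xi}$ (this is exactly the content already embedded in the computations of Section \ref{section4}, which only used ${\Psi}$-regularity available from the global existence theorem). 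With these two points in place the cascade of four weighted bounds closes and the conclusion follows immediately.
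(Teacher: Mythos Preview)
Your proposal is correct and follows essentially the same approach as the paper: the paper's proof of Lemma \ref{yl8} is literally the one-line remark ``Taking $\alpha>0,\beta>0$ in Lemma \ref{yl9}--Lemma \ref{yl12}, we have \ldots'', and your argument is just a careful unpacking of that sentence (sum the four weighted estimates, drop the dissipation integrals, use $e^{\alpha|\xi-\xi_*|}\ge 1$). Your points (i)--(ii) about applicability along the global trajectory are reasonable extra care that the paper leaves implicit.
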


Theorem  \ref{theorem201} gives that ${\textcolor{blue}{\Psi }}(t,{\textcolor{blue}{\xi}})$ is a global smooth solution. Combining $\theta>\beta$, Lemma \ref{yl5} and Lemma \ref{yl8}, we complete the proof of Theorem  \ref{theorem2.1}.
\section{Proof of Lemmas \ref{yl4}, \ref{yl6}}\label{section6}

For convenience, we define
\begin{equation}
\begin{aligned}
& w_{l}\textcolor{blue}{({\textcolor{blue}{\xi}},t)}:{\textcolor{red}{=}} v_{l}\textcolor{blue}{({\textcolor{blue}{\xi}},t)}-{\bar{v}}_{l},\quad w_{r}\textcolor{blue}{({\textcolor{blue}{\xi}},t)}:{\textcolor{red}{=}} u_{r}\textcolor{blue}{({\textcolor{blue}{\xi}},t)}- {\bar{v}}_{r}.
\end{aligned}
\end{equation}
\subsection{ Proof of Lemma \ref{yl4}}
\begin{proof}
\textcolor{blue}{U}sing the similar method in \cite{XYY2019}, we obtain that there exists a unique  $ {\textcolor{blue}{\eta }}_0$, such that  the initial data satisfies \eqref{x2.13}. With the aid of Rankine-Hugoniot condition \eqref{x2.3} and Lemma \ref{yl2}, one can easily prove that
\begin{equation}\label{A7.1}
  | {\textcolor{blue}{\eta }}'(t)| \leq C {\textcolor{red}{\delta}} e^{-\theta t}.
\end{equation}
Once  \eqref{A7.1} is proved, one gets that there exists a constant $ {\textcolor{blue}{\eta }}_{\infty}$, such that
\begin{equation}
 {\textcolor{blue}{\eta }}(t)-  {\textcolor{blue}{\eta }}_{\infty}{\textcolor{red}{=}}-\int_{t}^{\infty} {\textcolor{blue}{\eta }}'(t){d}t.
\end{equation}
However, the exact expression of $ {\textcolor{blue}{\eta }}_{\infty} $ is not easy to obtain. Motivated by \cite{XYY2021}, now we find this constant. For $y \in(0,1), N\in\mathbb{N}^{*}$, we define the domain
\begin{equation*}
\Omega_{y}^{N}:{\textcolor{red}{=}}\left\{({\textcolor{blue}{\xi}}, {\tau}):  {\textcolor{blue}{\eta }}(\tau)+(-N+y) p_{l} \leq {\textcolor{blue}{\xi}} \leq  {\textcolor{blue}{\eta }}(\tau)+(N+y) p_{r}, 0 \leq \tau \leq t\right\}.
\end{equation*}
We define $G(z):{\textcolor{red}{=}}f(z) -sz$. With the aid of the equations of $u_{l}$ and $u_{r}$, we have
\begin{equation*}
\iint_{\Omega_{y}^{N}}\mathfrak{E}({\textcolor{blue}{\xi}},\tau) d{\textcolor{blue}{\xi}} d\tau{\textcolor{red}{=}}0,
\end{equation*}
where
\begin{align}
\begin{split}
\mathfrak{E}({\textcolor{blue}{\xi}},\tau){\textcolor{red}{=}}& \left(\partial_{t} u_{l}+\partial_{{\textcolor{blue}{\xi}}} G\left(u_{l}\right)  +  \textcolor{blue}{\mu} \partial_{{\textcolor{blue}{\xi}}}^{3} u_{l}-\textcolor{blue}{\gamma} \partial_{{\textcolor{blue}{\xi}}}^{2} u_{l}\right) g_ {\textcolor{blue}{\eta }} \\          &+\left(\partial_{t} u_{r}+\partial_{{\textcolor{blue}{\xi}}} G\left(u_{r}\right)+ \textcolor{blue}{\mu} \partial_{{\textcolor{blue}{\xi}}}^{3} u_{r}-\textcolor{blue}{\gamma} \partial_{{\textcolor{blue}{\xi}}} ^{2} u_{r}\right)\left(1-g_ {\textcolor{blue}{\eta }} \right)
\end{split}&
\end{align}
Then integrating by parts, and using Green formula, we have
\begin{align}\label{5.31}
\begin{split}
  &\iint_{\Omega_{y}^{N}} \mathfrak{D}({\textcolor{blue}{\xi}},\tau) g_ {\textcolor{blue}{\eta }}'
d{\textcolor{blue}{\xi}} d\tau\\
{\textcolor{red}{=}}&A^{N}\textcolor{blue}{(y,t)}-A^{N}(y,0)-B_{l}^{N}\textcolor{blue}{(y,t)}+B_{r}^{N}\textcolor{blue}{(y,t)},
\end{split}&
\end{align}
where
\begin{align*}
\begin{split}
\mathfrak{D}({\textcolor{blue}{\xi}},\tau):{\textcolor{red}{=}}- {\textcolor{blue}{\eta }} '(t)\left(u_{l}-u_{r}\right)+\left(G\left(u_{l}\right)-\textcolor{blue}{\gamma} \partial_{{\textcolor{blue}{\xi}}} u_{l} +\textcolor{blue}{\mu}\partial^{2}_{{\textcolor{blue}{\xi}}} u_{l} \right)-\left(G\left(u_{r}\right)-\textcolor{blue}{\gamma} \partial_{{\textcolor{blue}{\xi}}}u_{r} +\textcolor{blue}{\mu}\partial^{2}_{{\textcolor{blue}{\xi}}} u_{r}\right),
\end{split}&
\end{align*}
\begin{align*}
\begin{split}
A^{N}\textcolor{blue}{(y,t)}:{\textcolor{red}{=}}&\int_{ {\textcolor{blue}{\eta }}(t)+(-N+y) p_{l}}^{ {\textcolor{blue}{\eta }}(t)+(N+y) p_{r}}\left[u_{l}\textcolor{blue}{({\textcolor{blue}{\xi}},t)} g_ {\textcolor{blue}{\eta }}({\textcolor{blue}{\xi}})+u_{r}({\textcolor{blue}{\xi}}, t)\left(1-g_ {\textcolor{blue}{\eta }}({\textcolor{blue}{\xi}})\right)\right]
d{\textcolor{blue}{\xi}},\\
A^{N}(y,0):{\textcolor{red}{=}}&\int_{ {\textcolor{blue}{\eta }}_{0}+(-N+y) p_{l}}^{ {\textcolor{blue}{\eta }}_{0}+(N+y) p_{r}}\left[u_{l}\textcolor{blue}{({\textcolor{blue}{\xi}},0)} g_{ {\textcolor{blue}{\eta }}_{0}}({\textcolor{blue}{\xi}})+u_{r}({\textcolor{blue}{\xi}}, 0)\left(1-g_{ {\textcolor{blue}{\eta }}_{0}}({\textcolor{blue}{\xi}})\right)\right]
{ d{\textcolor{blue}{\xi}} },\\
B_{l}^{N}\textcolor{blue}{(y,t)}:{\textcolor{red}{=}}&\int_{0}^{t}\Big\{\left(G\left(u_{l}\right)-\textcolor{blue}{\gamma} \partial_{{\textcolor{blue}{\xi}}} u_{l} +\textcolor{blue}{\mu}\partial^{2}_{{\textcolor{blue}{\xi}}} u_{l} \right) g_ {\textcolor{blue}{\eta }}+\left(G\left(u_{r}\right)-\textcolor{blue}{\gamma} \partial_{{\textcolor{blue}{\xi}}} u_{r} +\textcolor{blue}{\mu}\partial^{2}_{{\textcolor{blue}{\xi}}} u_{r} \right)\left(1-g_ {\textcolor{blue}{\eta }}\right) \\
&- {\textcolor{blue}{\eta }}'(\tau)\left[u_{l} g_ {\textcolor{blue}{\eta }}+u_{r}\left(1-g_ {\textcolor{blue}{\eta }}\right)\right]\Big\}\left( {\textcolor{blue}{\eta }}(\tau)+(-N+y) p_{ l}, \tau\right)
d\tau,\\
B_{r}^{N}\textcolor{blue}{(y,t)}:{\textcolor{red}{=}} &\int_{0}^{t}\Big\{\left(G\left(u_{l}\right)-\textcolor{blue}{\gamma} \partial_{{\textcolor{blue}{\xi}}} u_{l} +\textcolor{blue}{\mu}\partial^{2}_{{\textcolor{blue}{\xi}}} u_{l} \right) g_ {\textcolor{blue}{\eta }}+\left(G\left(u_{r}\right)-\textcolor{blue}{\gamma} \partial_{{\textcolor{blue}{\xi}}} u_{r} +\textcolor{blue}{\mu}\partial^{2}_{{\textcolor{blue}{\xi}}} u_{r} \right)\left(1-g_ {\textcolor{blue}{\eta }}\right)   \\
& -  {\textcolor{blue}{\eta }}'(\tau)\left[u_{l} g_ {\textcolor{blue}{\eta }}+u_{r}\left(1-g_ {\textcolor{blue}{\eta }}\right)\right]\Big\}\left( {\textcolor{blue}{\eta }}(\tau)+(-N+y) p_{ r},  \tau\right)d\tau.
\end{split}&
\end{align*}
With the aid of \eqref{x2.12}, we have
\begin{align*}
 \iint_{\Omega_{y}^{N}}\mathfrak{D}({\textcolor{blue}{\xi}},\tau) g_ {\textcolor{blue}{\eta }}'d{\textcolor{blue}{\xi}} d\tau{\textcolor{red}{=}}0 \quad \text {as} N \rightarrow+\infty.
\end{align*}
(i) The integrals on $\{\tau{\textcolor{red}{=}}0\}$ and $\{\tau{\textcolor{red}{=}}t\}$.
 With the help of Lemma \ref{yl2}, we have $\left\|w_{l}\right\|_{L^{\infty}}+\left\|w_{r}\right\|_{L^{\infty}} \leq C {\textcolor{red}{\delta}} e^{-\alpha t} $.  With the aid of \eqref{1.3},  one gets
\begin{align}
\begin{split}
 J^{N}\textcolor{blue}{(y,t)}:{\textcolor{red}{=}}& A^{N}\textcolor{blue}{(y,t)}-A^{N}(y,0) \\
{\textcolor{red}{=}}&\int_{(-N+y) p_{l}}^{(N+y) p_{r}}\left[w_{l}({\textcolor{blue}{\xi}}+ {\textcolor{blue}{\eta }}(t),t) g({\textcolor{blue}{\xi}})+w_{r}({\textcolor{blue}{\xi}}+ {\textcolor{blue}{\eta }}(t), t)(1-g({\textcolor{blue}{\xi}}))\right]
{d{\textcolor{blue}{\xi}} }\\
&-\int_{ {\textcolor{blue}{\eta }}_{0}+(-N+y) p_{l}}^{ {\textcolor{blue}{\eta }}_{0}+(N+y) p_{r}}\left[w_{0l}({\textcolor{blue}{\xi}}) g\left({\textcolor{blue}{\xi}}- {\textcolor{blue}{\eta }}_{0}\right)+w_{0r}({\textcolor{blue}{\xi}})\left(1-g\left({\textcolor{blue}{\xi}}- {\textcolor{blue}{\eta }}_{0}\right)\right)\right]d{\textcolor{blue}{\xi}}\\
\leq &C {\textcolor{red}{\delta}}\left(e^{-\alpha t}\right)+\int_{ {\textcolor{blue}{\eta }}_{0}+(-N+y) p_{l}}^{ {\textcolor{blue}{\eta }}_{0}+y p_{l}}\left(w_{0l}-w_{0r}\right)({\textcolor{blue}{\xi}})\left(1-g\left({\textcolor{blue}{\xi}}- {\textcolor{blue}{\eta }}_{0}\right)\right)d{\textcolor{blue}{\xi}}\\
&-\int_{ {\textcolor{blue}{\eta }}_{0}+y p_{l}}^{ {\textcolor{blue}{\eta }}_{0}+y p_{r}}\left[w_{0l}({\textcolor{blue}{\xi}}) g\left({\textcolor{blue}{\xi}}- {\textcolor{blue}{\eta }}_{0}\right)+w_{0 r}({\textcolor{blue}{\xi}})\left(1-g\left({\textcolor{blue}{\xi}}- {\textcolor{blue}{\eta }}_{0}\right)\right)\right]d{\textcolor{blue}{\xi}} \\
&-\int_{ {\textcolor{blue}{\eta }}_{0}+y p_{r}}^{ {\textcolor{blue}{\eta }}_{0}+(N+y) p_{r}}\left(w_{0 l}-w_{0 r}\right)({\textcolor{blue}{\xi}}) g\left({\textcolor{blue}{\xi}}- {\textcolor{blue}{\eta }}_{0}\right)d{\textcolor{blue}{\xi}}.
\end{split}&
\end{align}
Thus, we have
\begin{align}\label{5.35}
\begin{split}
J(y, t)\leq &  C {\textcolor{red}{\delta}} \left(e^{-\alpha t}\right)+\int_{-\infty}^{ {\textcolor{blue}{\eta }}_{0}+y p_{l}}\left(w_{0l}-w_{0 r}\right)({\textcolor{blue}{\xi}})\left(1-g\left({\textcolor{blue}{\xi}}- {\textcolor{blue}{\eta }}_{0}\right)\right)d{\textcolor{blue}{\xi}}\\
&-\int_{ {\textcolor{blue}{\eta }}_{0}+y p_{l}}^{ {\textcolor{blue}{\eta }}_{0}+y p_{r}}\left[w_{0l}({\textcolor{blue}{\xi}}) g\left({\textcolor{blue}{\xi}}- {\textcolor{blue}{\eta }}_{0}\right)+w_{0r}({\textcolor{blue}{\xi}})\left(1-g\left({\textcolor{blue}{\xi}}- {\textcolor{blue}{\eta }}_{0}\right)\right)\right]d{\textcolor{blue}{\xi}}\\
&-\int_{ {\textcolor{blue}{\eta }}_{0}+y p_{r}}^{+\infty}\left(w_{0 l}-w_{0 r}\right)({\textcolor{blue}{\xi}}) g\left({\textcolor{blue}{\xi}}- {\textcolor{blue}{\eta }}_{0}\right)d{\textcolor{blue}{\xi}},
\end{split}&
\end{align}
where $J(y, t):{\textcolor{red}{=}} \lim_{N \rightarrow \infty} J^{N}\textcolor{blue}{(y,t)}$. With the aid of $\eqref{x2.10},$ it follows that
\begin{align}
\begin{split}
0{\textcolor{red}{=}}&-\int_{ -\infty}^{\infty}\left[u_{0}({\textcolor{blue}{\xi}})-\textcolor{blue}{\phi}\left({\textcolor{blue}{\xi}}- {\textcolor{blue}{\eta }}_{0}\right)-w_ {0l}({\textcolor{blue}{\xi}}) g\left({\textcolor{blue}{\xi}}- {\textcolor{blue}{\eta }}_{0}\right)-w_{0 r}({\textcolor{blue}{\xi}})\left(1-g\left({\textcolor{blue}{\xi}}- {\textcolor{blue}{\eta }}_{0}\right)\right)\right]d{\textcolor{blue}{\xi}}\\
{\textcolor{red}{=}}&-\int_{-\infty}^{0}\left(u_{0}-\textcolor{blue}{\phi}-w_{0l}\right)({\textcolor{blue}{\xi}}) d{\textcolor{blue}{\xi}}-\int_{0}^{+\infty}\left(u_{0}-\textcolor{blue}{\phi}-w_{0r}\right)({\textcolor{blue}{\xi}})d{\textcolor{blue}{\xi}}\\
&+\left(\bar{u}_{l}-\bar{u}_{r}\right)  {\textcolor{blue}{\eta }}_{0}-\int_{-\infty}^{0}\left(w_{0l}-w_{0 r}\right)({\textcolor{blue}{\xi}})\left(1-g\left({\textcolor{blue}{\xi}}- {\textcolor{blue}{\eta }}_{0}\right)\right)d{\textcolor{blue}{\xi}}\\
&+\int_{0}^{+\infty}\left(w_{0 l}-w_{0 r}\right)({\textcolor{blue}{\xi}}) g\left({\textcolor{blue}{\xi}}- {\textcolor{blue}{\eta }}_{0}\right)d{\textcolor{blue}{\xi}}.
\end{split}&
\end{align}
Together with \eqref{5.35}, we have
\begin{align}
\begin{split}
J\textcolor{blue}{(y,t)}\leq & C {\textcolor{red}{\delta}}\left(e^{-\alpha t}\right) +\left(\bar{u}_{l}-\bar{u}_{r}\right)  {\textcolor{blue}{\eta }}_{0}\\
&-\int_{-\infty}^{0}\left(u_{0}-\textcolor{blue}{\phi}-w_{0l}\right)({\textcolor{blue}{\xi}})d{\textcolor{blue}{\xi}}-\int_{0}^{+\infty}\left(u_{0}-\textcolor{blue}{\phi}-w_{0r}\right)({\textcolor{blue}{\xi}})
d{\textcolor{blue}{\xi}}\\
&+\int_{0}^{ {\textcolor{blue}{\eta }}_{0}+y p_{i}} w_{0l}({\textcolor{blue}{\xi}})d{\textcolor{blue}{\xi}}-\int_{0}^{ {\textcolor{blue}{\eta }}_{0}+y p_{r}} w_{0 r}({\textcolor{blue}{\xi}})
d{\textcolor{blue}{\xi}}.
\end{split}&
\end{align}
Since $\int_{0}^{p_{i}}w_{0i}d{\textcolor{blue}{\xi}}{\textcolor{red}{=}}0$  for $i{\textcolor{red}{=}}l, r,$  $\int_{0}^{y} w_{0i}({\textcolor{blue}{\xi}})
d{\textcolor{blue}{\xi}}$ is periodic with respective to $y$ with period $p_{i}$. Therefore
\begin{align}
\begin{split}
\int_{0}^{1}\int_{0}^{ {\textcolor{blue}{\eta }}_{0}+y p_{i}} w_{0 i}({\textcolor{blue}{\xi}}) d{\textcolor{blue}{\xi}} dy &{\textcolor{red}{=}}\frac{1}{p_{i}} \int_{0}^{p_{i}} \int_{0}^{ {\textcolor{blue}{\eta }}_{0}+y} w_{0 i}({\textcolor{blue}{\xi}})d{\textcolor{blue}{\xi}} dy\\
&{\textcolor{red}{=}}\frac{1}{p_{i}} \int_{0}^{p_{i}} \int_{0}^{y} w_{0 i}({\textcolor{blue}{\xi}})d{\textcolor{blue}{\xi}} dy.
\end{split}&
\end{align}
So, we have
\begin{align}
\begin{split}
\int_{0}^{1} J\textcolor{blue}{(y,t)} dy\leq & C {\textcolor{red}{\delta}}\left(e^{-\alpha t}\right) +\left(\bar{u}_{l}-\bar{u}_{r}\right)  {\textcolor{blue}{\eta }}_{0} \\
&-\int_{-\infty}^{0}\left(u_{0}-\textcolor{blue}{\phi}-w_{0l}\right)({\textcolor{blue}{\xi}})d{\textcolor{blue}{\xi}}-\int_{0}^{+\infty}\left(u_{0}-\textcolor{blue}{\phi}-w_{0r}\right)({\textcolor{blue}{\xi}})d{\textcolor{blue}{\xi}}\\
&+\frac{1}{p_{l}} \int_{0}^{p_{l}} \int_{0}^{y} w_{0 l}({\textcolor{blue}{\xi}})d{\textcolor{blue}{\xi}} dy-\frac{1}{p_{r}} \int_{0}^{p_{r}} \int_{0}^{y} w_{0r}({\textcolor{blue}{\xi}})d{\textcolor{blue}{\xi}} dy.
\end{split}&
\end{align}
(ii) The integrals on two sides.
Since $u_{l}$ is periodic, it holds that
\begin{align}\label{5.40}
\begin{split}
B_{l}^{N}(y, t){\textcolor{red}{=}}&\int_{0}^{t}\left\{\left(G\left(u_{l}\right)-\textcolor{blue}{\gamma} \partial_{{\textcolor{blue}{\xi}}} u_{l} +  \textcolor{blue}{\mu} \partial^{2}_{{\textcolor{blue}{\xi}}} u_{l} \right)   \left(  {\textcolor{blue}{\eta }}(\tau)+y p_{l}, \tau\right) g\left((-N+y) p_{l}\right)\right.\\
&- {\textcolor{blue}{\eta }}'(\tau) u_{l}\left( {\textcolor{blue}{\eta }}(\tau)+y p_{l}, \tau\right)\\
&\left.+[\cdots]\left(1-g_ {\textcolor{blue}{\eta }}\right)\left( {\textcolor{blue}{\eta }}(\tau)+(-N+y) p_{l}, \tau\right)\right\}
{d \tau }.\\
\end{split}&
\end{align}
where $[\cdots]$ denotes the remaining terms which are bounded. Then by taking the limit $N \rightarrow+\infty$ in \eqref{5.40} and using Lemma $\ref{yl3},$ one can get
\begin{align}
\begin{split}
\lim _{N \rightarrow+\infty} \int_{0}^{1} B_{l}^{N}\textcolor{blue}{(y,t)}dy&{\textcolor{red}{=}}\int_{0}^{t} \frac{1}{p_{l}} \int_{0}^{p_l} G\left(u_{l}\right)({\textcolor{blue}{\xi}},\tau)
d{\textcolor{blue}{\xi}} d\tau-\bar{u}_{l}\left( {\textcolor{blue}{\eta }} (t)- {\textcolor{blue}{\eta }}_{0}\right). \\
\end{split}&
\end{align}
Similarly, it holds that
\begin{align}
\begin{split}
\lim _{N \rightarrow+\infty} \int_{0}^{1} B_{r}^{N}\textcolor{blue}{(y,t)}dy{\textcolor{red}{=}}\int_{0}^{t} \frac{1}{p_{r}} \int_{0}^{p_r} G\left(u_{r}\right)({\textcolor{blue}{\xi}},\tau)d{\textcolor{blue}{\xi}} d\tau-\bar{u}_{r}\left( {\textcolor{blue}{\eta }} (t)- {\textcolor{blue}{\eta }}_{0}\right).
\end{split}&
\end{align}
Now, with the calculations in (i) and (ii), one can integrate the equation \eqref{5.31} with respect to $y$ over $(0,1),$ and then let $N \rightarrow+\infty,$  for any $t>0$
\begin{equation}
\begin{aligned}
&\int_{0}^{1} J\textcolor{blue}{(y,t)}dy+\left(\bar{u}_{l}-\bar{u}_{r}\right)\left( {\textcolor{blue}{\eta }}(t)- {\textcolor{blue}{\eta }}_{0}\right) \\
{\textcolor{red}{=}}& \int_{0}^{t}\left[\frac{1}{p_{l}} \int_{0}^{p_{l}} G\left(u_{l}\right)({\textcolor{blue}{\xi}},\tau)
d{\textcolor{blue}{\xi}}-\frac{1}{p_{r}} \int_{0}^{p_{r}} G\left(u_{r}\right)({\textcolor{blue}{\xi}},\tau)
d{\textcolor{blue}{\xi}}\right]d\tau.
\end{aligned}
\end{equation}
Note also that for $i{\textcolor{red}{=}}l, r,$
\begin{equation}
\begin{aligned}
\int_{0}^{t} \frac{1}{p_{i}} \int_{0}^{p_{i}} G\left(u_{i}\right)dy d\tau{\textcolor{red}{=}}&\int_{0}^{t} \frac{1}{p_{i}} \int_{0}^{p_{i}}\left(G\left(u_{i}\right)-G\left(\bar{u}_{i}\right)\right)dy d\tau+G\left(\bar{u}_{i}\right)t\\
\leq &\int_{0}^{+\infty} \frac{1}{p_{i}} \int_{0}^{p_{i}}\left(G\left(u_{i}\right)-G\left(\bar{u}_{i}\right)\right)
dy d\tau\\
&+C{\textcolor{red}{\delta}}\left(e^{-\alpha t}\right)+G\left(\bar{u}_{i}\right)t.
\end{aligned}
\end{equation}
Thus we have the proof of Lemma \ref{yl4}.
\end{proof}
\subsection {Proof of Lemma \ref{yl6}}
\begin{proof}
The proof is motivate by \cite{XYY2019}. With the aid of \eqref{x2.9}, when ${\textcolor{blue}{\xi}}< {\textcolor{blue}{\eta }}(t)$, one has
\begin{align*}
\begin{split}
H{\textcolor{red}{=}}&-f(\textcolor{blue}{U})+f(u_{l})g_{ {\textcolor{blue}{\eta }}}
 + f(u_{r}) (1-g_{ {\textcolor{blue}{\eta }}})-3\textcolor{blue}{\mu} (u_{l}-u_{r})_{{\textcolor{blue}{\xi}}} g_ { {\textcolor{blue}{\eta }}}'+ 2\textcolor{blue}{\gamma}(u_{l}-u_{r} )g_{ {\textcolor{blue}{\eta }}}'\\
&-\int_{-\infty}^{{\textcolor{blue}{\xi}}}\mathfrak{N}\textcolor{blue}{({\textcolor{blue}{\xi}},t)}d{\textcolor{blue}{\xi}}:{\textcolor{red}{=}}H_1+H_2,
\end{split}&
\end{align*}
where
\begin{equation*}
\mathfrak{N}\textcolor{blue}{({\textcolor{blue}{\xi}},t)}:{\textcolor{red}{=}} (f(u_{l})-f(u_{r}))g_{ {\textcolor{blue}{\eta }}}'+\textcolor{blue}{\gamma}(u_{l}-u_{r}) g_{ {\textcolor{blue}{\eta }}}''+ \textcolor{blue}{\mu} (u_{l}-u_{r}) g_{ {\textcolor{blue}{\eta }}}'''-(u_{l}-u_{r}) \cdot g_{ {\textcolor{blue}{\eta }}}' \cdot\left(s+ {\textcolor{blue}{\eta }}'(t)\right).
\end{equation*}
\textcolor{blue}{U}sing  Lemma \ref{yl2}--Lemma \ref{yl4}, one gets
\begin{align*}
\begin{split}
H_1{\textcolor{red}{=}}&-f(\textcolor{blue}{U})+f (u_{l})+ \textcolor{blue}{\gamma} (\bar{u}_{l}-\bar u_{r}) g_{ {\textcolor{blue}{\eta }}}'-\textcolor{blue}{\mu}  (\bar{u}_{l}-\bar u_{r})g_{ {\textcolor{blue}{\eta }}}''+\int_{-\infty}^{{\textcolor{blue}{\xi}}} (\bar{u}_{l}-\bar{ u }_{r}) \cdot g_{ {\textcolor{blue}{\eta }}} ' \cdot s d{\textcolor{blue}{\xi}}\\
{\textcolor{red}{=}}&-f(\textcolor{blue}{U})+f (u_{l})+ f(\textcolor{blue}{\phi}_{ {\textcolor{blue}{\eta }}})-f(\bar u_{l})\\
\leq  &C {\textcolor{red}{\delta}} e^{-\theta t}e^{{\sigma_0}({\textcolor{blue}{\xi}}- {\textcolor{blue}{\eta }}(t))},
\end{split}&
\end{align*}
and
\begin{align*}
\begin{split}
H_2{\textcolor{red}{=}}&-\int_{-\infty}^{{\textcolor{blue}{\xi}}} (\bar{u}_{r}-\bar{u}_{l}) \cdot g_{ {\textcolor{blue}{\eta }}}' \cdot  {\textcolor{blue}{\eta }}'(t)d{\textcolor{blue}{\xi}}+ [f(u_{r})-f(\bar{u}_{r}) -f(u_{l})+f(\bar{u}_{l})](1-g_{ {\textcolor{blue}{\eta }}})\\
&+2\textcolor{blue}{\gamma}(w_{l}-w_{r})g_{ {\textcolor{blue}{\eta }}}'-3\textcolor{blue}{\mu}( w_{l}-w_{r})_{{\textcolor{blue}{\xi}}}g_{ {\textcolor{blue}{\eta }}}'\\
&+\int_{-\infty}^{{\textcolor{blue}{\xi}}}(w_{l}-w_{r})\cdot g_{ {\textcolor{blue}{\eta }}}' \cdot\left(s+ {\textcolor{blue}{\eta }}'(t)\right)-[f(u_{l})-f(\bar{u}_{l})+f(\bar{u}_{r})-f(u_{r})]g_{ {\textcolor{blue}{\eta }}}'d{\textcolor{blue}{\xi}}\\
&-\int_{-\infty }^{{\textcolor{blue}{\xi}}}\textcolor{blue}{\gamma}(w_{l}-w_{r})\cdot g_{ {\textcolor{blue}{\eta }}} ''+\textcolor{blue}{\mu}(w_{l}-w_{r})\cdot g_{ {\textcolor{blue}{\eta }}}'''d{\textcolor{blue}{\xi}}\\
\leq& C{\textcolor{red}{\delta}} e^{-\theta t}\left\{\int_{-\infty }^{{\textcolor{blue}{\xi}}}(|g_{ {\textcolor{blue}{\eta }}} '''|+|g_{ {\textcolor{blue}{\eta }}}''|+|g_{ {\textcolor{blue}{\eta }}}'|)d {\textcolor{blue}{\xi}}+(1-g_{ {\textcolor{blue}{\eta }}})\right\}\\
\leq &C {\textcolor{red}{\delta}} e^{-\theta t}e^{\sigma_{0}({\textcolor{blue}{\xi}}- {\textcolor{blue}{\eta }}(t))}.
\end{split}&
\end{align*}
As to ${\textcolor{blue}{\xi}}> {\textcolor{blue}{\eta }}(t), $ using the same method,  we have $ H\textcolor{blue}{({\textcolor{blue}{\xi}},t)}\leq C {\textcolor{red}{\delta}} e^{-\theta t} e^{-{\sigma_0}\left({\textcolor{blue}{\xi}}- {\textcolor{blue}{\eta }}(t)\right)}.$
Similar, it follows that $|H_{{\textcolor{blue}{\xi}}}|,|H_{{\textcolor{blue}{\xi}}{\textcolor{blue}{\xi}}}|,|H_{{\textcolor{blue}{\xi}}{\textcolor{blue}{\xi}}{\textcolor{blue}{\xi}}}|\leq C {\textcolor{red}{\delta}} e^{-\theta t} e^{{\sigma_0}\left|{\textcolor{blue}{\xi}}- {\textcolor{blue}{\eta }}(t)\right|}$.
\end{proof}
\section{Appendix}
\subsection{Proof of Lemma \ref{yl2}}
In this section, we write $\|.\|_{H^{k+1}(\Omega)}$ as $\|.\|_{k+1}$, for convenience.

Claim 1. For any integer $k\geq0$,
\begin{equation}
 \left\|\partial_{{\textcolor{blue}{\xi}}}^{k}(u-\bar{u})\right\|^{2}  +\int_{0}^{t}   \left\|\partial_{{\textcolor{blue}{\xi}}}^{k+ 1} u\right\|^{2} d\tau\leq  C  \left\| (u_{0}-\bar{u})\right\|_{k}^{2}  \quad \forall t>0,
\label{6.1}
\end{equation}
where $C$ is a constant depends only on $u_0, p$ and $f$. Then we prove Claim 1 by the induction method.

\emph{step 1}: We will prove \eqref{6.1} is true when $k{\textcolor{red}{=}}0$. Multiplying  $u-\bar{u}$ on each side of \eqref{s1.1} integrating with respect to ${\textcolor{blue}{\xi}}$ over $[0,p]$, one gets
\begin{equation}
\frac{d}{dt} \left(\left\| u-\bar{u}  \right\|^{2}\right)+2 \textcolor{blue}{\gamma}\left\|  \partial_{{\textcolor{blue}{\xi}}} u \right\|^{2}{\textcolor{red}{=}}0, \quad \forall t>0.
\label{6.2}\end{equation}
Integrating \eqref{6.2} with respect to $t$ over $[0,t]$, we have
\begin{equation}\label{b5}
 \left\| u-\bar{u} \right\|^{2} +2 \textcolor{blue}{\gamma}\int_{0}^{t}  \left\|\partial_{{\textcolor{blue}{\xi}}} u\right\|^{2} d\tau{\textcolor{red}{=}}\left\|u_{0}-\bar{u} \right\|^{2}, \quad \forall t>0.
\end{equation}
\emph{step 2}: We assume \eqref{6.1} is true when $k{\textcolor{red}{=}}2,...,m-1$. We will prove  \eqref{6.1} is true when $k{\textcolor{red}{=}}m$. Taking the derivative $\partial_{{\textcolor{blue}{\xi}}}^{m} $ in \eqref{s1.1},  multiplying $\partial_{{\textcolor{blue}{\xi}}}^{m}u$ on each side, integrating  the result  over $[0,p]$, with the aid of Cauchy inequality, one has
\begin{equation*}
\begin{aligned}
\frac{d}{dt}\left(\left\|\partial_{{\textcolor{blue}{\xi}}}^{m} u\right\|^{2}\right)+2\textcolor{blue}{\gamma}    \left\|\partial_{{\textcolor{blue}{\xi}}}^{m+1} u\right\|^{2} \leq\textcolor{blue}{\gamma} \left\|\partial_{{\textcolor{blue}{\xi}}}^{m+1} u\right\|^{2}+ C_{\textcolor{blue}{\gamma}}\sum_{k{\textcolor{red}{=}}1}^{m}\|\partial^{k}_{{\textcolor{blue}{\xi}}}u\|^2.
\end{aligned}
\end{equation*}
 Thus
\begin{equation}
\begin{aligned}
&\frac{d}{dt}\left(\left\|\partial_{{\textcolor{blue}{\xi}}}^{m} u\right\|^{2}\right)+\textcolor{blue}{\gamma}  \left\|\partial_{{\textcolor{blue}{\xi}}}^{m+1} u\right\|^{2} \leq C \sum_{k{\textcolor{red}{=}}1}^{m}\|\partial^{k}_{{\textcolor{blue}{\xi}}}u\|  ^2.
\end{aligned}
\label{6.3}
\end{equation}
Integrating  \eqref{6.3} over $[0,t]$. Thus for $ \forall t>0$
\begin{equation*}
\begin{aligned}
&\left\|\partial_{{\textcolor{blue}{\xi}}}^{m} u\right\|^{2}+ \textcolor{blue}{\gamma} \int_{0}^{t}  \left\|\partial_{{\textcolor{blue}{\xi}}}^{m+1} u\right\| ^{2}d\tau
\leq  C    \sum_{k{\textcolor{red}{=}}1}^{m }    \int_{0}^{t} \|\partial^{k}_{{\textcolor{blue}{\xi}}}u\|  ^2 d\tau    +\left\|\partial_{{\textcolor{blue}{\xi}}}^{m} u_{0}\right\| ^{2}\\ \leq&   C        \left\|  (u_{0}-\bar{u} )\right\| _{ m}^{2} .
\end{aligned}
\end{equation*}
Thus  Claim 1 is true.

Claim $2.$ For each $k \geq 0, C >0,$ it follows that
\begin{equation}
 \left\| \partial_{{\textcolor{blue}{\xi}}}^{k}(u-\bar{u}) ({\textcolor{blue}{\xi}}, t)  \right \|^{2}   \leq C \left\|   u_{0}-\bar{u}    \right\|_{ {k}}^{2} e^{-\theta t} \quad \forall t \leq 0.
\label{6.4}
\end{equation}

\emph{step 1}:  We will proof \eqref{6.4} is true when $k{\textcolor{red}{=}}0$.
With the aid of   Poincare inequality on $[0, p],$   there exists a constant $\theta>0,$ which depends only on $p,$ such that
\begin{equation}
\int_{0}^{p}\left(\partial_{{\textcolor{blue}{\xi}}} u\right)^{2}({\textcolor{blue}{\xi}}, t) {d{\textcolor{blue}{\xi}}  }  \geq    \frac{\theta}{2\textcolor{blue}{\gamma}} \int_{0}^{p}(u-\bar{u})^{2}({\textcolor{blue}{\xi}}, t) {d{\textcolor{blue}{\xi}}  }.
\label{6.5}\end{equation}
Combining  \eqref{6.2} and  \eqref{6.5}, we have
\begin{equation*}
 \left\|  (u-\bar{u}) ({\textcolor{blue}{\xi}}, t)  \right \|^{2}   \leq C \left\|   (u_{0}-\bar{u})   \right \|^{2} e^{-\theta t} \quad \forall t \leq 0.
\end{equation*}

\emph{step 2}: We assume \eqref{6.4} is true when $  k{\textcolor{red}{=}}1,2,...,m-1$. We will prove  \eqref{6.4} is true when $ k {\textcolor{red}{=}} m$. For any j, $0\leq j\leq  m$, combining \eqref{6.2} and  \eqref{6.3}, we have
\begin{equation}
\frac{d}{{ d t }} \left(\left\|\partial_{{\textcolor{blue}{\xi}}}^{j }(u-\bar{u})\right\|^{2}
\right)+ \textcolor{blue}{\gamma}\left\|\partial_{{\textcolor{blue}{\xi}}}^{j+1} u\right\|^{2}
 \leq  C    \sum_{k{\textcolor{red}{=}}1}^{j }  \|\partial^{k}_{{\textcolor{blue}{\xi}}}u\|  ^2.
\label{6.6}
\end{equation}
Letting $j{\textcolor{red}{=}}m $ in $\eqref{6.6}$,  we have
\begin{equation}
\frac{d}{{ d t }} \left\|\partial_{{\textcolor{blue}{\xi}}}^{m} u\right\|^{2} \leq C_{m} \sum_{k{\textcolor{red}{=}}1}^{m} \left\|\partial_{{\textcolor{blue}{\xi}}}^{k} u\right\|^{2} \leq C \left\|   u_{0}-\bar{u}    \right\|_{m-1}^{2}e^{-\theta t} +C_{m} \left\|\partial_{{\textcolor{blue}{\xi}}}^{m} u\right\|^{2},
\label{6.7}\end{equation}
where $C_{m}>\theta .$ We have used Claim 1 and Sobolev inequality.   
Letting $j{\textcolor{red}{=}} m-1$ in $\eqref{6.6}$, one gets that $\forall t \geq 0$
\begin{equation}
\frac{{ d }}{{ d t }} \left\|\partial_{{\textcolor{blue}{\xi}}}^{m-1}(u-\bar{u})\right\|^{2} +\textcolor{blue}{\gamma} \left\|\partial_{{\textcolor{blue}{\xi}}}^{m} u\right\|^{2}  \leq C\left\|   u_{0}-\bar{u}    \right\|_{m-1}^{2} e^{-\theta t}.
\label{6.8}\end{equation}
Then multiply $\frac{2 C_{m}}{\textcolor{blue}{\gamma}}$ on $\eqref{6.8}$. Add $\eqref{6.7}$ to the result. One gets  $\forall t \geq 0$
\begin{equation}
\frac{d}{{  d t}}\left(\frac{2 C_{m}}{\textcolor{blue}{\gamma}} \left\|\partial_{{\textcolor{blue}{\xi}}}^{m-1}(u-\bar{u})\right\|^{2}
 + \left\|\partial_{{\textcolor{blue}{\xi}}}^{m} u\right\|^{2}  \right)+C_{m} \left\|\partial_{{\textcolor{blue}{\xi}}}^{m} u\right\|^{2}  \leq C e^{-\theta t} \left\|   u_{0}-\bar{u}    \right\|_{m-1}^{2}.
\label{6.9}\end{equation}
Denote
$$E_{m}(t):{\textcolor{red}{=}}\frac{2 C_{m}}{\textcolor{blue}{\gamma}} \left\|\partial_{{\textcolor{blue}{\xi}}}^{m-1}(u-\bar{u})\right\|^{2} + \left\|\partial_{{\textcolor{blue}{\xi}}}^{m} u\right\|^{2}. $$
Taking $k{\textcolor{red}{=}}m-1$ in $\eqref{6.4}$, with the aid of $\eqref{6.9}$,  for $\forall t \geq 0$, we have
\begin{equation}
E_{m}'(t)+C_{m} E_{m}(t)\leq \tilde{C}_{m} \left\|   u_{0}-\bar{u}    \right\|_{m-1}^{2} e^{-\theta t}.
 \label{6.10}\end{equation}
Here $\tilde{C}_{m} $ is a new constant depends on $m$. Multiplying \eqref{6.10} by $e^{C_{m}t}$, we have
\begin{equation*}
\frac{d}{dt}[e^{C_{m}t}E_{m}(t)]\leq \tilde{C}_{m} \left\|u_{0}-\bar{u}    \right\|_{m-1}^{2} e^{(C_{m}-\theta)t}.\\
\end{equation*}
Thus
\begin{equation*}
e^{C_{m}t}E_{m}(t)\leq E_{m}(0)+\frac{\tilde{C}_{m} }{C_{m}-\theta}\left\|   u_{0}-\bar{u}\right\|_{m-1}^{2}[e^{(C_{m}-\theta)t}-1].
\end{equation*}
With the aid of $C_{m}>\theta$, one gets
\begin{equation*}
\begin{aligned}
E_{m}(t) &\leq E_{m}(0) e^{-C_{m}t}+\frac{\tilde{C}_{m} }{C_{m}-\theta}\left\|   u_{0}-\bar{u}\right\|_{m-1}^{2} e^{-\theta t}\\
&\leq C \left\|u_{0}-\bar{u}\right\|_{m}^{2} e^{-\theta t}.
\end{aligned}
\end{equation*}
The proof of Claim 2 is accomplished. Then by Sobolev inequality and Claim $2,$ for any integer $k\leq 0$ and $t\leq 0$, we have
\[
 \left\|  \partial_{{\textcolor{blue}{\xi}}}^{k}(u-\bar{u})\right\|_{L^{\infty}(\mathbb{R})}{\textcolor{red}{=}}\left\|  \partial_{{\textcolor{blue}{\xi}}}^{k}(u-\bar{u})\right\|_{L^{\infty}(\Omega)} \leq C \left\|u_{0} -\bar{u}\right\|_{H^{k+1}(\Omega)} e^{-\theta t}.
 \]
Thus, we finish the proof of Lemma\ref{yl2}.

\end{document}